\def\thm@space@setup{%
  \thm@preskip=\parskip \thm@postskip=0pt
}
\newtheorem{theorem}{THEOREM}%[section] links numbering to section
\newtheorem*{theorem*}{THEOREM}
\newtheorem{lemma}[theorem]{Lemma}  %[theorem] between brackets links numbering to theorems and for props etc.
\newtheorem*{lemma*}{Lemma}
\newtheorem{prop}[theorem]{Proposition}
\newtheorem{corollary}[theorem]{Corollary}
\newtheorem{definition}[theorem]{Definition}
\theoremstyle{definition}
\newtheorem*{note}{Note}
\newtheorem*{remark}{Remark}
\newcommand{\E}[1]{\ensuremath{\mathbb{E} \left[#1 \right]}}
\newcommand{\Prob}[1]{\ensuremath{\mathbb{P} \left(#1 \right)}}
\newcommand{\Bin}[2]{\ensuremath{\mathrm{Bin}\left(#1,#2 \right)}}
\newcommand{\R}{\ensuremath{\mathbb{R}}}
\newcommand{\Z}{\ensuremath{\mathbb{Z}}}
\newcommand{\N}{\ensuremath{\mathbb{N}}}
\newcommand{\Ch}[2]{\ensuremath{\begin{pmatrix} #1 \\ #2 \end{pmatrix}}}
\newcommand{\ch}[2]{\ensuremath{\left( \begin{smallmatrix} #1 \\ #2
\end{smallmatrix} \right)}}
\newcommand{\F}{\ensuremath{\mathcal{F}}}
\newcommand{\C}{\ensuremath{\mathcal{C}}}
\newcommand{\fl}[1]{\ensuremath{\lfloor #1 \rfloor}}
\newcommand{\ce}[1]{\ensuremath{\lceil #1 \rceil}}
\newcommand{\equidist}{\ensuremath{\stackrel{d}{=}}}
\newcommand{\ER}{Erd\H{o}s-R\'enyi }
\newcommand{\cV}{{\mathcal{V}}}
\newcommand{\cA}{{\mathcal{A}}}
\newcommand{\cU}{{\mathcal{U}}}
\newcommand{\cZ}{{\mathcal{Z}}}
\newcommand{\cC}{{\mathcal{C}}}
\newcommand{\kH}{\check{H}}
\newcommand{\tH}{\bar{H}}
\newcommand{\tod}{\,\stackrel{d}\rightarrow\,}
\begin{document}
\title{\textbf{Critical random forests}}
%\author{James B.\ Martin and Dominic Yeo}
\author{James Martin \thanks{Department of Statistics, University of Oxford. \texttt{martin@stats.ox.ac.uk}} \and Dominic Yeo \thanks{
Faculty of Industrial Engineering and Management, Technion. 
\texttt{yeo@technion.ac.il}}}
\date{2 July 2018}
\maketitle
%\tableofcontents
\begin{abstract}
Let $F(N,m)$ denote a random forest on a set of $N$ 
vertices, chosen uniformly from all forests with $m$
edges. Let $F(N,p)$ denote the forest obtained by 
conditioning the \ER graph $G(N,p)$ to be acyclic. 
We describe scaling limits for the
largest components of $F(N,p)$ and $F(N,m)$, 
in the \textit{critical window}
$p=N^{-1}+O(N^{-4/3})$ or $m=N/2+O(N^{2/3})$. 
Aldous \cite{Aldous97} described a scaling limit
for the largest components of $G(N,p)$ 
within the critical window in terms of the
excursion lengths of a reflected Brownian
motion with time-dependent drift. 
Our scaling limit for critical random forests
is of a similar nature, but now based on a 
reflected diffusion whose drift depends
on space as well as on time.
%We review {\bf [James to write/rewrite]} Aldous's results \cite{Aldous97} about the distribution of the sequence of component-sizes in $G(N,p)$, when $p=p(N)$ lies in the \emph{critical window} $p(N)=\frac{1+\lambda N^{-1/3}}{N}$. Aldous describes the scaling limit for the largest such components in terms of the excursions of a reflected Brownian motion with time-dependent drift. We prove a similar result for the random forest obtained by conditioning $G(N,p)$ to have no cycles, for the same range of $p$. We describe a scaling limit for the largest components of such a \emph{critical random forest}, but now using a reflected diffusion whose drift is space-dependent as well as time-dependent.
\end{abstract}

\renewcommand{\thefootnote}{}
\footnotetext{Key words: Random forest, random graph, critical window, exploration process.}
\footnotetext{AMS 2010 Subject Classification: 05C80, 60C05.}

\section{Introduction}
Let $G(N,p)$ be the \ER random graph 
with vertex set $[N]$, in which each 
of the $\binom{N}{2}$ possible edges appears independently
with probability $p$. 
%{\bf [Check ok]} Strengthening the work of Bollob\'as \cite{BB84}, 
%{\L}uczak \cite{Luczak90} showed that within the \emph{critical window} $p(N)=\frac{1}{N}+O(N^{-4/3})$, the largest
%components of $G(N,p)$ are of order $N^{2/3}$.
%In a seminal paper, Aldous \cite{Aldous97}
%gave a scaling limit for the 
%joint distribution of the sizes of these 
%largest components of $G(N,p)$ within this critical window. 
%Under suitable rescaling, their 
%sizes converge to the lengths of the excursions 
%of a reflected Brownian motion with time-dependent 
%drift. 
%The central result of \cite{Aldous97} may
%be written as follows: 
In a seminal paper, Aldous \cite{Aldous97}
gave a scaling limit for the 
joint distribution of the sizes of the
largest components of $G(N,p)$ within the
critical window $p(N)=\frac{1}{N}+O(N^{-4/3})$.
In this regime, the largest components
are of order $N^{2/3}$ 
(as was shown first by Bollob\'as in \cite{BB84} up to a logarithmic correction, and then by {\L}uczak in \cite{Luczak90});
after rescaling by $N^{2/3}$, their 
sizes converge to the lengths of the excursions 
of a reflected Brownian motion with time-dependent 
drift. 
The central result of \cite{Aldous97} may
be written as follows: 
\begin{prop}\label{prop:AldousGNp}
\newcommand{\CG}{C^{G(N,p)}}
\newcommand{\cCBlambda}{\mathcal{C}^{B^\lambda}}
Let $\lambda\in\R$,
and consider the sequence of random graphs $G(N,p)$
with $p=p(N)=N^{-1}+\lambda N^{-4/3}$.
Let $\CG_1, \CG_2, \dots$
be the sequence of component sizes of $G(N,p)$ 
written in non-increasing order, 
augmented by zeros. 

Let $B^\lambda(t), t\geq 0$ be a
(time-inhomogeneous) reflected
Brownian motion, with drift $\lambda-t$ at time $t$,
and $B^\lambda(0)=0$. 
Let $\cCBlambda$ be the lengths of the excursions
of $B^\lambda$, written in non-increasing order. Then
\begin{equation}\label{Aldousconvergence}
N^{-2/3}\left(\CG_1, \CG_2, \dots\right)
\tod
\cCBlambda
\text{ as }
N\to\infty
\end{equation}
with respect to the $\ell^2$ topology.
\end{prop}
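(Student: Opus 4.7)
The natural strategy is to encode the component sizes of $G(N,p)$ as excursion lengths of an exploration walk, and then to push a functional limit theorem for that walk through to the excursions. I would use a breadth-first (or depth-first) exploration: maintain a stack of active vertices and, at each step $i=1,\dots,N$, pop one vertex (or, if the stack is empty, activate a fresh unseen vertex), then reveal its neighbours among the yet-unseen vertices (an essentially independent $\Bin{N-s_i}{p}$ number, where $s_i$ counts the vertices already touched). Setting $Z_N(0)=0$ and $Z_N(i)=Z_N(i-1)+\eta_i-1$, with $\eta_i$ the number of neighbours revealed at step $i$, the components of $G(N,p)$ correspond bijectively to the excursions of $Z_N$ above its running minimum, with excursion lengths equal to component sizes.

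Next, rescale space by $N^{1/3}$ and time by $N^{2/3}$: put $\bar{Z}_N(t) := N^{-1/3} Z_N(\fl{N^{2/3}t})$. At index $i = \fl{N^{2/3}t}$ the increment has conditional mean $(N-i)p - 1 = N^{-1/3}(\lambda - t) + o(N^{-1/3})$ and conditional variance $1 + o(1)$, so a martingale functional CLT (e.g.\ Ethier--Kurtz) gives $\bar{Z}_N \tod X$ in the Skorohod $J_1$-topology on compact intervals, where $X(t) = B(t) + \lambda t - t^2/2$ for $B$ a standard Brownian motion. Since reflection at the running infimum is Lipschitz in the uniform norm, the reflected walks $\bar{Z}_N(\cdot) - \inf_{s\le\cdot}\bar{Z}_N(s)$ converge to $B^\lambda$.

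The final step---converting convergence of the reflected walks into $\ell^2$-convergence of the ordered excursion lengths---is the main technical obstacle, since the excursion-length functional is not continuous in any Skorohod topology. I would handle it in two parts. First, for any fixed $\epsilon > 0$, $B^\lambda$ almost surely has only finitely many excursions of length $>\epsilon$ on any compact interval, and a standard continuity argument then forces the "long" excursions of $\bar{Z}_N$ to match those of $B^\lambda$ in position and length for all large $N$. Second, the contribution of the "short" excursions must be controlled by a uniform tail bound $\limsup_N \E{\sum_{j\ge J}(N^{-2/3} C_j)^2} \to 0$ as $J\to\infty$, where $C_j$ are the ordered component sizes of $G(N,p)$. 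This second part is the hard ingredient: it is established by a second-moment calculation on $G(N,p)$, for example by writing $\sum_j C_j^2$ as the number of ordered same-component vertex pairs and bounding its expectation directly. Combining the two parts through a truncation argument yields the claimed $\ell^2$ convergence.
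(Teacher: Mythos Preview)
The paper does not prove this proposition; it is stated as the central result of Aldous \cite{Aldous97} and cited without proof. Your sketch is essentially Aldous's original argument, and the main ideas are sound: encode component sizes as excursion lengths of an exploration walk, prove a functional CLT for the rescaled walk, and then argue that ordered excursion lengths converge in $\ell^2$.

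One point worth sharpening: your description of the $\ell^2$ tail estimate is slightly imprecise. Bounding $\E{\sum_j C_j^2}$ globally (as the expected number of same-component ordered pairs) only gives tightness of the total, not the required decay $\limsup_N \E{\sum_{j\ge J}(N^{-2/3}C_j)^2}\to 0$ as $J\to\infty$. What Aldous actually controls is the sum of squares of \emph{small} components, i.e.\ $\sum_j C_j^2 \,\mathds{1}\{C_j\le \epsilon N^{2/3}\}$, via a second-moment bound on $\sum_v |C(v)|\,\mathds{1}\{|C(v)|\le \epsilon N^{2/3}\}$; this is what feeds into the truncation argument. Your proposal gestures at this but conflates it with the cruder global bound.

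As a side remark, the present paper (in Section~\ref{subsec:intro-exploration}) notes that Aldous exploits the spatial homogeneity of $B^\lambda$ to work with the \emph{unreflected} walk and excursions above the running minimum, which makes the excursion-convergence step easier than in the reflected setting the authors need for $F(N,p)$; your sketch already takes this route.
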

A similar result may be written for the 
random graph $G(N,m)$ (that is, a graph chosen uniformly 
from all those with vertex set $[N]$ and with
$m$ edges), in the regime $m=N/2+O(N^{2/3})$. 

Aldous's result has been extended in multiple 
ways. The same Brownian scaling limit has
been shown to arise in more general settings
including configuration models and inhomogeneous
random graphs, provided the tail of the degree
distribution is sufficiently light
\cite{Turova, Riordan, Joseph, BvdHvL1, BhaBudWan, DvdHvLS-critical}.
In some such cases, finer scaling limits describing
the metric structure of the large components, 
as well as their size, have been obtained,
in terms of objects related to the Brownian 
continuum random tree \cite{A-BBG10, A-BBG09, BBSW, BSW}.
When the third moment of the vertex degrees
is infinite, different scaling limits arise
\cite{Joseph, BvdHvL2, DvdHvLS-heavy} which can be 
described in terms of excursion lengths
of the ``thinned L{\'e}vy processes" introduced
in \cite{AldousLimic98}. 
Finally, dynamic models have been studied
in which the developing component structure
of a random graph process (or more generally
a multiplicative coalescent) is described
(as a process) by excursions of a Brownian motion
or thinned L{\'e}vy process whose drift changes with time
\cite{armendariz_phd, BroutinMarckert, Limic-eternal, MartinRath}.

In this paper we develop in a new direction,
to consider the sizes of trees in random forests. 
Write $F(N,m)$ for a graph chosen uniformly at random 
from all forests on $[N]$ with $m$ edges
(equivalently, those forests consisting of $N-m$ trees). 
Write also $F(N,p)$ for the graph $G(N,p)$ conditioned
to be acyclic. Our main results give a  scaling limit for the joint distribution of the sizes of the largest trees in $F(N,m)$ or in $F(N,p)$ in the critical regime (this critical regime coincides with 
that for $G(N,m)$ and $G(N,p)$ above). 
The limit is given by the collection of excursion lengths of a diffusion, as at (\ref{Aldousconvergence}) 
but now the limiting diffusion is
inhomogeneous in space as well as in time. 

Just as for Aldous's proof of 
Proposition \ref{prop:AldousGNp},
these convergence results are proved by 
analysing the \textit{graph exploration process},
which encodes enough of the graph structure
to recover the sequence of component sizes. 
We discuss the exploration process, and its 
scaling limit, in Section \ref{subsec:intro-exploration};
before that, we introduce the notation needed
to state our main results. 

\subsection{Definition of the diffusion}
\label{subsec:intro-diffusion}
{\L}uczak and Pittel \cite{LuczakPittel92}
studied the model $F(N,m)$, and identified
subcritical, critical and supercritical regimes.
Within the critical window, 
specifically for $m=N/2+(\lambda+o(1))N^{2/3}$,
their Theorem 4.1 establishes convergence
in distribution for $N^{-2/3}C_k$, where
$C_k$ is the size of the $k$th largest tree
in $F(N,m)$. 

{\L}uczak and Pittel's analysis relies
on enumeration results of Britikov \cite{Britikov},
which we also use extensively. 
Britikov gives asymptotics for $f(N,m)$, the number of forests on $[N]$ with exactly $m$ edges, via Bell polynomials. The relevant regime of Britikov's result is summarised by Lemma 2.1(ii) of 
\cite{LuczakPittel92}:

\begin{lemma}
%\cite[Theorem 2.1.ii]{LuczakPittel92}
For any constant $c>0$, as $N\rightarrow\infty$,
\begin{equation}\label{eq:Britikovresult}f(N,m)=(1+o(1))\frac{\sqrt{2\pi}N^{N-1/6}}{2^{N-m}(N-m)!} g\left(\frac{2m-N}{N^{2/3}}\right),\end{equation}
uniformly for $m\in\left[N/2-cN^{2/3}, N/2+cN^{2/3}\right]$,
%$ satisfying $|2m-N|^3/N^2\le c$, 
where
\begin{equation}\label{eq:defng}g(x)=\frac{1}{\pi}\int_0^\infty \exp(-\tfrac{4}{3}t^{3/2})\cos(xt+\tfrac43 t^{3/2})dt,\end{equation}
is the density of a stable distribution with parameter 3/2.
\end{lemma}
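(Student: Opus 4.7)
The plan is a saddle-point analysis of a contour integral representation. Cayley's formula gives $n^{n-2}$ labelled trees on $n$ vertices, so the exponential generating function for labelled unrooted trees is $U(x) = \sum_{n\geq 1}\frac{n^{n-2}}{n!} x^n$. A forest on $[N]$ with $N-m$ components is an unordered partition of $[N]$ into tree-decorated blocks, yielding
$$f(N,m) = \frac{N!}{(N-m)!}\,[x^N]\,U(x)^{N-m} = \frac{N!}{(N-m)!\cdot 2\pi i}\oint \frac{U(x)^{N-m}}{x^{N+1}}\,dx.$$
I would parametrise the contour via $x = te^{-t}$, so that $T(x) = t$ is the rooted tree function and $U(x) = t - t^2/2$ is polynomial in $t$; the singularity of $U$ at $x = 1/e$ corresponds to the critical point $t = 1$ of the map. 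A short calculation using $T = xe^T$ gives $xU'(x) = T(x)$, so the saddle-point condition $(N-m)xU'/U = N$ reduces to the explicit identity $T(x_\ast) = 2m/N$, locating the saddle at $t_\ast = 1 + \lambda N^{-1/3}$ when $m = N/2 + \tfrac12 \lambda N^{2/3}$.

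The natural scaling is therefore $t = 1 + u N^{-1/3}$. Expanding the log-integrand around $t = 1$ yields
$$(N-m)\log U(x) - N\log x = \bigl[N - (N-m)\log 2\bigr] + \tfrac{\lambda}{2} u^2 - \tfrac{1}{3} u^3 + O(N^{-1/3})$$
uniformly for $u$ in compact sets, while the Jacobian $dx = (1-t)e^{-t}\,dt$ contributes $-uN^{-2/3}e^{-1}(1+o(1))\,du$. Deforming to the steepest-descent contour (whose image in the $t$-plane near $t = 1$ is, to leading order, a pair of rays at angles $\pm\pi/4$, as one sees from $|te^{-t}| = |t|e^{-\mathrm{Re}(t)}$), the rescaled integral converges to the universal Airy-type integral
$$\mathcal{I}(\lambda) := \frac{1}{2\pi i}\int_C u\,\exp\!\Bigl(\tfrac{\lambda}{2} u^2 - \tfrac{1}{3} u^3\Bigr)\,du.$$
A shift $u = y + \lambda/2$ turns $\mathcal{I}(\lambda)$ into an explicit linear combination of $\mathrm{Ai}(\lambda^2/4)$ and $\mathrm{Ai}'(\lambda^2/4)$, which by a classical identity equals the cosine-integral representation of the maximally-skewed stable-$3/2$ density $g(\lambda)$. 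Combining this with Stirling's formula $N! \sim \sqrt{2\pi N}\,N^N e^{-N}$ and the bulk factor $e^N 2^{-(N-m)}$ coming from evaluation at $t = 1$ produces the prefactor $\sqrt{2\pi}\,N^{N-1/6}/(2^{N-m}(N-m)!)$ of~(\ref{eq:Britikovresult}).

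The main obstacle is \emph{uniformity} in $\lambda \in [-c,c]$. The geometry of $\Phi(u) = \tfrac{\lambda}{2}u^2 - \tfrac13 u^3$ changes qualitatively as $\lambda$ crosses zero, since its two critical points at $u = 0$ and $u = \lambda$ collide there; one needs a \emph{single} contour valid for all $\lambda$ in the window, which is precisely why the limit is the universal integral $\mathcal{I}(\lambda)$ rather than a pointwise Gaussian saddle-point that would describe only the sub- or super-critical tail of $g$. A rigorous implementation requires (i)~a uniform bound $|U(re^{i\theta})| < U(r)$ for $\theta$ bounded away from $0$ on $|x| = 1/e$, showing the off-saddle tail of the contour is negligible; (ii)~uniform control of the quartic and higher Taylor remainders (fortunate here, as the quartic coefficient in the expansion is only $O(N^{2/3})$ rather than $O(N)$); and (iii)~a check that the contour can be deformed to the steepest-descent path without crossing any additional singularity. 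Each ingredient is standard in the Flajolet--Sedgewick singularity-analysis toolkit, but demands care because the saddle $t_\ast$ and the branch point $t = 1$ sit at distance $O(N^{-1/3})$ from each other.
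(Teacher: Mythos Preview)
The paper does not prove this lemma at all: it is quoted as a known result of Britikov, restated as Lemma~2.1(ii) of {\L}uczak--Pittel, and used as a black box throughout. So there is no ``paper's proof'' against which to compare your proposal.

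Your contour-integral / saddle-point route is the natural way to prove the result, and is close in spirit to how it is done in the source literature. Britikov's original argument is phrased via Bell polynomials and a local limit theorem for a sum of i.i.d.\ stable-like summands; your generating-function formulation is the analytic-combinatorics translation of the same computation (Fourier inversion on one side, Cauchy's formula on the other). Your identification of the saddle $T(x_\ast)=2m/N$, the scaling $t=1+uN^{-1/3}$, and the cubic local expansion $\tfrac{\lambda}{2}u^2-\tfrac13 u^3$ are all correct, and you are right that the crux is uniformity in $\lambda$ across the coalescence of the two critical points --- this is exactly the mechanism that produces the Airy / stable-$3/2$ scaling rather than a Gaussian.

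As written, however, this is a sketch rather than a proof. Two places need real work: first, the local contour description ``rays at angles $\pm\pi/4$'' is the image of the circle $|x|=1/e$ under $x=te^{-t}$, but along those rays $\mathrm{Re}(u^3)<0$, so the cubic does not provide decay there; you need to deform further (or truncate and re-extend) to reach directions in which $e^{-u^3/3}$ is integrable, and justify that the deformation costs $o(1)$ uniformly in $\lambda$. Second, the identification of your $\mathcal{I}(\lambda)$ with the cosine integral \eqref{eq:defng} is asserted via ``a classical identity'' but is not automatic: the extra factor $u$ from the Jacobian means $\mathcal{I}$ is an Airy-\emph{derivative}-type integral, and matching it to the specific stable-$3/2$ density $g$ (with its particular normalisation) requires an explicit computation, not just a citation. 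Both steps are standard in the coalescing-saddle literature (e.g.\ Banderier--Flajolet--Schaeffer--Soria, or Flajolet--Sedgewick~IX.11), but they are precisely where a reader would want to see the details.
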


As we shall see in Lemma \ref{acyclicboundlemma}, it follows that the asymptotic probability that $G(N,p)$ is acyclic in this regime is $\Theta(N^{-1/6})$. 
%{\bf (Could move this remark? Seems OK here though)}

\begin{definition}
For $b>0$ and $\lambda\in\mathbb{R}$, let
\begin{equation}\label{eq:defnalpha}
\alpha(b,\lambda):=\frac{\int_0^\infty a^{-1/2}g(\lambda-a)\exp\left(\tfrac{(\lambda-a)^3}{6}\right)\exp(-\frac{b^2}{2a})\mathrm{d}a}{\int_0^\infty a^{-3/2}g(\lambda-a)\exp\left(\tfrac{(\lambda-a)^3}{6}\right)\exp(-\frac{b^2}{2a})\mathrm{d}a}.
\end{equation}
%which will turn out to be the additive correction one must %make to the drift of the rescaled reflected exploration %process of $G(N,\frac{1+\lambda N^{-1/3}}{N})$ when the %height is $b$, to account for acyclicity.
\end{definition}

\begin{lemma}\label{gregularitylemma}
The function $g$ defined in \eqref{eq:defng} is positive, bounded, and uniformly continuous, and satisfies $g(x)\rightarrow 0$ as $x\rightarrow \pm\infty$. 
The integrals in the numerator and denominator of (\ref{eq:defnalpha}) both
converge for all $b$ and $\lambda$. The function $\alpha$ is continuous
and increasing in its first argument, and satisfies $\alpha(b,\lambda)\rightarrow 0$ as 
$b\downarrow 0$, uniformly on $\lambda$ in compact intervals.
\end{lemma}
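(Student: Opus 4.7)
\textbf{Proof plan for Lemma \ref{gregularitylemma}.}

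\textbf{Properties of $g$.} I will first handle $g$ directly from its integral representation \eqref{eq:defng}. Boundedness and (global Lipschitz) uniform continuity follow by pulling the absolute value inside the integral and bounding $|\cos|\le 1$ for boundedness, and $|\cos(xt+\tfrac{4}{3}t^{3/2})-\cos(yt+\tfrac{4}{3}t^{3/2})|\le t|x-y|$ for the Lipschitz estimate, using that $\int_0^\infty t\exp(-\tfrac{4}{3}t^{3/2})\,dt<\infty$. The limit $g(x)\to 0$ as $|x|\to\infty$ follows from the Riemann--Lebesgue lemma applied to $t\mapsto \exp(-\tfrac{4}{3}t^{3/2})e^{i\tfrac{4}{3}t^{3/2}}\mathbf{1}_{t\ge 0}$, which is integrable on $\R$. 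Finally, since $g$ is the density of a $3/2$-stable law, it is well known to be strictly positive on all of $\R$; if preferred one can argue this directly from the Fourier representation together with the unimodality of stable densities.

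\textbf{Convergence of the integrals in \eqref{eq:defnalpha}.} Write the common factor $h(a,\lambda):=g(\lambda-a)\exp\bigl(\tfrac{(\lambda-a)^3}{6}\bigr)$. By the first part, $h(\cdot,\lambda)$ is bounded and continuous, and $h(a,\lambda)\to h(0,\lambda)=g(\lambda)\exp(\lambda^3/6)$ as $a\downarrow 0$. As $a\to\infty$, $\exp(\tfrac{(\lambda-a)^3}{6})$ decays faster than any polynomial or exponential, which swamps the mild factors $a^{-1/2}$ or $a^{-3/2}$. The potential singularities at $a=0$ of $a^{-1/2}$ and $a^{-3/2}$ are crushed by $\exp(-b^2/(2a))$, since for every $k$, $a^{-k}e^{-b^2/(2a)}\to 0$ as $a\downarrow 0$ when $b>0$. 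Hence both integrals converge for every $b>0$ and $\lambda\in\R$.

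\textbf{Continuity and monotonicity in $b$.} Let
\[
\mu_{b,\lambda}(da):=a^{-3/2} h(a,\lambda)\exp(-b^2/(2a))\,da,
\]
so that $\alpha(b,\lambda)=\int a\, d\mu_{b,\lambda}\big/\int d\mu_{b,\lambda}$ is the mean of $a$ under the probability measure $\bar\mu_{b,\lambda}:=\mu_{b,\lambda}/\mu_{b,\lambda}(\R_+)$. Continuity in $b$ (for fixed $\lambda$) on any interval $b\in[b_1,b_2]\subset(0,\infty)$ follows from dominated convergence applied to both integrals, with dominating function $a^{-3/2}h(a,\lambda)\exp(-b_1^2/(2a))$ for the denominator and $a^{-1/2}h(a,\lambda)\exp(-b_1^2/(2a))$ for the numerator; these are integrable by the previous step. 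For monotonicity in $b$, observe that for $b_1<b_2$ the Radon--Nikodym derivative
\[
\frac{d\mu_{b_2,\lambda}}{d\mu_{b_1,\lambda}}(a)=\exp\!\Bigl(-\tfrac{b_2^2-b_1^2}{2a}\Bigr)
\]
is strictly increasing in $a$; this is the classical monotone likelihood ratio condition, which implies that $\bar\mu_{b_2,\lambda}$ stochastically dominates $\bar\mu_{b_1,\lambda}$, and hence that their means satisfy $\alpha(b_1,\lambda)<\alpha(b_2,\lambda)$.

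\textbf{Limit as $b\downarrow 0$, uniformly on compact $\lambda$-intervals.} Fix a compact $K\subset\R$. By dominated convergence, the numerator converges to $\int_0^\infty a^{-1/2}h(a,\lambda)\,da$, which is uniformly bounded in $\lambda\in K$ (the integrand is dominated by a $\lambda$-uniform integrable function, using continuity and decay of $h$). The main work is the denominator, which I expect to diverge like $1/b$. Splitting at a small $\delta>0$, on $(\delta,\infty)$ the integral is $O(1)$ uniformly in $\lambda\in K$, while on $(0,\delta)$ I substitute $u=b^2/(2a)$ to get
\[
\int_0^\delta a^{-3/2}\exp(-b^2/(2a))\,da
=\frac{\sqrt{2}}{b}\int_{b^2/(2\delta)}^\infty u^{-1/2}e^{-u}\,du\;\xrightarrow[b\downarrow0]{}\;\frac{\sqrt{2\pi}}{b},
\]
and by uniform continuity of $h(\cdot,\lambda)$ near $0$ (uniform in $\lambda\in K$) I can replace $h(a,\lambda)$ by $h(0,\lambda)$ on $(0,\delta)$ up to an error controlled by $\delta$. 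Since $h(0,\lambda)=g(\lambda)\exp(\lambda^3/6)$ is bounded below by a positive constant on $K$ (using strict positivity of $g$ and continuity), the denominator is bounded below by $c/b$ for small $b$, uniformly in $\lambda\in K$. Thus $\alpha(b,\lambda)=O(b)$ uniformly in $\lambda\in K$, which gives the stated uniform convergence.

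\textbf{Main obstacle.} The routine parts are the properties of $g$ and the convergence of the integrals. The point requiring care is the final uniform $b\downarrow 0$ statement, because a naive dominated convergence argument fails (the denominator diverges); the correct move is to extract the leading $1/b$ behaviour at $a\downarrow 0$ via the substitution above, and then to use strict positivity of $g$ on $K$ to get the uniform lower bound. The monotonicity statement is clean once the MLR observation is made.
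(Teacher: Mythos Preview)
Your proposal is correct and follows essentially the same route as the paper: the properties of $g$ are attributed to its being a stable density (the paper simply cites Zolotarev rather than giving your direct bounds), the integrals are handled by the same split near $0$ and near $\infty$, monotonicity is exactly the paper's Lemma~\ref{incrdomlemma} (your MLR observation), and the $b\downarrow 0$ limit is obtained by the identical substitution $u=b^2/(2a)$ to show the denominator behaves like a constant over $b$ while the numerator stays bounded. One small point: the paper reads ``continuous'' as joint continuity in $(b,\lambda)$ and establishes this for each $J_k$ via dominated convergence; your argument as written only treats $b$ for fixed $\lambda$, but the same dominating functions work jointly, so this is a trivial extension rather than a gap.
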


We will prove Lemma \ref{gregularitylemma} in Section \ref{alpharegularity}, after the main probabilistic arguments.

We now define a reflected diffusion, $Z^\lambda$, whose
drift at time $s$ and height $b$ is 
$\lambda-s-\alpha(b, \lambda-s)$. 
The excursion lengths of $Z^\lambda$ will describe 
the scaling limits of the largest trees in our critical
random forests. Comparing with the definition of $B^\lambda$ in Proposition \ref{prop:AldousGNp}, we see
that the function $\alpha$ provides the correction
to the drift which is required to account for the acyclicity condition.
\begin{prop}\label{Zlambdaexists}
Consider a standard Brownian motion $W(\cdot)$ with natural filtration $\F^W$. For each $\lambda\in\R$, there exists a unique pair of non-negative $\mathcal{F}^W$-adapted processes $Z^\lambda, K^\lambda$ satisfying:
\begin{equation}\label{eq:ZSDE}
\begin{cases}
Z^\lambda(0)=0,\\ 
\mathrm{d}Z^\lambda(t)=
\left[\lambda - t -\alpha\left(Z^\lambda(t),\lambda-t\right)\right]\mathrm{d}t
+\mathrm{d}W(t) + K^\lambda(t),\end{cases}\end{equation}
where $K^\lambda$ is the local-time process of $Z^\lambda$ at zero. 
That is, $K^\lambda(\cdot)$ is continuous and increasing, 
with $K^\lambda(0)=0$, and $\int_0^\infty Z^\lambda(t)\mathrm{d}K^\lambda(t)=0$. 
\end{prop}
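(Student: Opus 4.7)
I would establish pathwise uniqueness via the monotonicity of $\alpha$ in its first argument, and then construct a solution by approximating $\alpha$ with locally Lipschitz functions and passing to the limit.

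For \emph{uniqueness}, suppose $(Z_1,K_1)$ and $(Z_2,K_2)$ are two solutions driven by the same $W$. The Brownian terms cancel, so $D := Z_1 - Z_2$ is of finite variation with $D(0)=0$ and
\[
dD(t) = \bigl[\alpha(Z_2(t),\lambda - t) - \alpha(Z_1(t),\lambda - t)\bigr]\,dt + dK_1(t) - dK_2(t).
\]
Computing $d(D^2) = 2D\,dD$: the drift contribution is pointwise non-positive because $\alpha$ is increasing in its first argument (Lemma \ref{gregularitylemma}), and the reflection contribution equals $-2Z_2\,dK_1 - 2Z_1\,dK_2 \leq 0$, using the defining relation $\int Z_i\,dK_i = 0$ together with $Z_i, K_i \geq 0$. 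Hence $D^2 \equiv 0$, so $Z_1 \equiv Z_2$, and then $K_1 \equiv K_2$.

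For \emph{existence}, note that $\alpha$ is only continuous (not Lipschitz) in $b$, so direct Picard iteration is unavailable. The plan is to mollify $\alpha$ in $b$ (extended by $0$ for $b \le 0$) to obtain approximations $\alpha^{(n)}$ that are locally Lipschitz and monotone in $b$, converging to $\alpha$ locally uniformly. For each $n$, standard Lipschitz theory for reflected SDEs (Picard iteration exploiting the Lipschitz continuity of the Skorokhod reflection map) yields a unique strong $\F^W$-adapted solution $(Z_n,K_n)$ to the SDE with drift $\lambda - s - \alpha^{(n)}(\cdot,\lambda - s)$.

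Fix $T > 0$. Since $\alpha^{(n)} \ge 0$, a comparison with the reflected SDE in which the $\alpha$-term is dropped (argued as in the uniqueness step) gives $Z_n(t) \le \widetilde Z(t)$ for $t \in [0,T]$, where $\widetilde Z$ is the Skorokhod reflection of $s \mapsto W(s) + \lambda s - s^2/2$. On the bounded random region in which $(Z_n(t),t)$ lives, $\alpha$ is uniformly continuous; consequently the drift processes are uniformly bounded, and the Lipschitz property of the Skorokhod map gives equicontinuity of $(Z_n)$. Arzel\`a--Ascoli yields a subsequential uniform limit $Z$ on $[0,T]$, and passing to the limit in the integral form of the SDE (justified by uniform continuity of $\alpha$ on the bounded region) shows that $Z$ solves \eqref{eq:ZSDE}. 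Pathwise uniqueness promotes subsequential to full convergence, so $Z$ is $\F^W$-adapted; since $T$ was arbitrary, this yields a solution on $[0,\infty)$. The main obstacle is securing the a priori bound $Z_n \le \widetilde Z$, which relies on positivity of $\alpha$ (visible from \eqref{eq:defnalpha} via $g > 0$ in Lemma \ref{gregularitylemma}); without such a bound the approximating solutions could escape to regions where $\alpha$ is uncontrolled.
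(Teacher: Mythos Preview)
Your approach is correct but takes a genuinely different route from the paper's. You work from the premise that $\alpha$ is merely continuous and monotone in its first argument (which is all Lemma~\ref{gregularitylemma} states), and therefore build a mollification/compactness argument for existence, with uniqueness obtained from monotonicity alone. The paper instead proves a stronger regularity result that you did not have access to: Proposition~\ref{alphaLipprop} shows that $\alpha(\cdot,\lambda)$ is locally Lipschitz in its first argument, uniformly for $\lambda$ in compact intervals. With this in hand, the paper simply invokes the Revuz--Yor existence/uniqueness theorem for reflected SDEs with bounded, globally Lipschitz coefficients (Theorem~\ref{ReflSDEthm}), applied to truncated drifts $b^R$ that agree with $b$ on $[0,R]\times[0,R]$; since the drift is bounded above by $\lambda$, the first hitting time of level $R$ tends to infinity, the localized solutions are consistent, and one passes to the limit $R\to\infty$.

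Your uniqueness argument via monotonicity is more elementary and more robust than the paper's (which gets uniqueness for free from the Lipschitz machinery), and your comparison $Z_n\le\widetilde Z$ is the right substitute for the paper's hitting-time bound. The one loose end in your existence argument is that mollifying $\alpha$ gives only \emph{locally} Lipschitz approximations $\alpha^{(n)}$, so ``standard Lipschitz theory'' does not immediately yield global solutions $Z_n$; you still need a localization step before the comparison with $\widetilde Z$ can be invoked to rule out explosion. A clean fix is to truncate as well as mollify, so that each $\alpha^{(n)}$ is smooth and compactly supported and the approximate drifts are genuinely globally Lipschitz and bounded. In short, the paper's route is shorter because it invests in the Lipschitz estimate for $\alpha$; yours is the natural argument when only continuity and monotonicity are available.
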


Since the drift term in \eqref{eq:ZSDE} is dominated by $\lambda-t$, $Z^\lambda$ almost surely has a well-defined largest excursion, and second-largest excursion, and so on.
\begin{definition}
Let $\cC^\lambda:=(C^\lambda_1, C^\lambda_2, \dots)$
be the sequence of lengths of the excursions
of $Z^\lambda$, written in non-increasing order.
\end{definition}
\subsection{Main results}
\label{subsec:intro-main}
We can now state the main results of the paper. 

\begin{theorem}\label{FNmtheorem}
Fix $\lambda\in\R$ and suppose that $m(N)$ is a sequence
of integers such that $m=N/2+(\lambda+o(1))N^{2/3}$ as $N\to\infty$. 
Consider the sequence of random forests $F(N,m)$. 
Let $C_1^{F(N,m)}\ge C_2^{F(N,m)}\ge\ldots$ be the sequence of tree sizes in $F(N,m)$, in non-increasing order, augmented with zeros. Then 
\begin{equation}\label{eq:l2convFNm}
N^{-2/3}\left(
C_1^{F(N,m)},C_2^{F(N,m)},\ldots
\right) \tod 
\cC^\lambda
\end{equation}
as $N\to\infty$, with respect to the $\ell^2$ topology.
\end{theorem}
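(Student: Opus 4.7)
The plan is to follow Aldous's template for Proposition \ref{prop:AldousGNp}: introduce a breadth-first exploration of $F(N,m)$ whose reflected walk encodes tree sizes as excursion lengths, show that this walk (rescaled in time by $N^{2/3}$ and space by $N^{1/3}$) converges in distribution to $Z^\lambda$ of Proposition \ref{Zlambdaexists}, and then invoke continuity of the ordered-excursion-length map to obtain \eqref{eq:l2convFNm}. The novelty relative to Aldous lies in identifying the drift correction $\alpha$ that the acyclicity constraint imposes. Concretely, I would let $X^N$ be the walk that at each step pops an active vertex, pushes its children in $F(N,m)$ onto the stack, and re-starts from a fresh unexplored vertex when the stack empties; with the usual reflection convention the lengths of excursions of $X^N$ away from $0$ equal the $C^{F(N,m)}_k$.

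The core computation is the one-step conditional law of $X^N(k+1)-X^N(k)$ given the history. If $k$ vertices have been explored, $e_k$ edges used, and the stack has height $h$, then by uniformity of $F(N,m)$ the conditional probability of $j$ new children is a ratio of the form $f(N-k,m-e_k-j)/f(N-k,m-e_k)$ (weighted by a binomial combinatorial factor), where $f$ is as in \eqref{eq:Britikovresult}. Setting $k=sN^{2/3}$ and $h=bN^{1/3}$ and applying Britikov's asymptotic, the ratio is governed by the stable density $g$; summing $j\cdot \mathbb{P}(j\text{ children})$ produces, after cancellations, an expected number of children equal to
\[
1+(\lambda-s)N^{-1/3}-\alpha(b,\lambda-s)N^{-1/3}+o(N^{-1/3}),
\]
in which the integrals defining $\alpha$ in \eqref{eq:defnalpha} are exactly the discrete sum over the possible size of the tree that currently sits atop the stack, weighted by $g$. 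Subtracting $1$ for the popped vertex gives the predicted drift of $Z^\lambda$, and a standard variance computation gives asymptotic unit quadratic variation per rescaled time unit.

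Combining the drift and variance asymptotics with a martingale-problem / Skorokhod-reflection argument (using Lemma \ref{gregularitylemma} to pass $\alpha$ to the limit continuously) yields
\[
N^{-1/3}X^N\!\left(\lfloor \cdot N^{2/3}\rfloor\right)\tod Z^\lambda
\]
in Skorokhod topology. Since $Z^\lambda$ has drift $\lambda-s-\alpha(\cdot,\lambda-s)$ dominated by $\lambda-s$, its excursion lengths almost surely form a summable, continuously-dependent sequence, so the ordered-excursion functional sends $N^{-1/3}X^N$ into $\cC^\lambda$ in $\ell^2$ after truncation at any time horizon $T$. The full $\ell^2$ convergence \eqref{eq:l2convFNm} then follows once one shows that
$\limsup_{N}N^{-4/3}\sum_{k:\,\text{tree starts after }TN^{2/3}}(C^{F(N,m)}_k)^2\to 0$ in probability as $T\to\infty$, which is standard once the drift of $\tilde X^N$ past time $T N^{2/3}$ is uniformly more negative than $-(T-\lambda)/2$.

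The main obstacle is the uniform analysis of the conditional drift near the reflection boundary $b=0$, and the control of small excursions needed for $\ell^2$. Britikov's asymptotic is not uniform at the "cold start" of each new tree, so the decay $\alpha(b,\lambda)\to 0$ as $b\downarrow 0$ from Lemma \ref{gregularitylemma} must be quantified strongly enough to match the local-time contribution of $K^\lambda$ and to justify that the many very short trees collectively contribute negligibly to the $\ell^2$ norm. Both issues are naturally handled by a second-moment bound on $\sum_v|T(v)|$, where $T(v)$ is the tree of $v$ in $F(N,m)$, evaluated again via \eqref{eq:Britikovresult}; this is the technical heart of the argument and is where the scheme differs most from the classical $G(N,p)$ case.
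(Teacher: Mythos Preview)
Your proposal attacks $F(N,m)$ directly via its exploration process, whereas the paper does not: it first proves the $F(N,p)$ version (Theorem~\ref{FNptheorem}) by the exploration-process route, and only then deduces Theorem~\ref{FNmtheorem} by a short sandwiching argument (Section~\ref{mtopsection}, Lemma~\ref{lem:almostmonotone}), coupling $F(N,p^-)\subseteq F(N,m)\subseteq F(N,p^+)$ with high probability for $p^\pm$ chosen so that both outer models already fall under Theorem~\ref{FNptheorem} with the same $\lambda$. The paper remarks explicitly that the $F(N,p)$ transitions are ``rather more straightforward'' than those of $F(N,m)$, which is why it takes this detour.

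Your direct approach has two concrete gaps. First, the exploration of $F(N,m)$ is not Markov in $(n,Z_n)$ alone: the number of edges revealed is $e_n=n+Z_n-c_n$, where $c_n$ counts the times the stack has previously emptied, so you must carry $e_n$ (or $c_n$) as an extra state variable and argue that it concentrates uniformly over the relevant range. Second, and more seriously, your transition formula is wrong. The conditional probability of $j$ new children is \emph{not} proportional to a ratio $f(N-k,m-e_k-j)/f(N-k,m-e_k)$, because the unexplored graph is not a uniform forest but a uniform forest \emph{conditioned on the $h$ stack vertices lying in distinct trees}. The correct weight is the number of forests on $N-k-1$ vertices with $m-e_k-j$ edges in which $h+j-1$ specified vertices are separated (compare Lemma~\ref{incrdescription} for $F(N,p)$). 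The drift correction $\alpha$ enters precisely through this separation constraint, via the expected size of the stack forest (Lemma~\ref{Estacklemma}); it cannot be read off from a bare ratio of the total forest counts $f$. Carrying out your program would therefore require fixed-edge-count analogues of Lemmas~\ref{Anrklemma} and~\ref{Estacklemma}, which is strictly more work than the $F(N,p)$ analysis plus the coupling of Section~\ref{mtopsection}.
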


\begin{theorem}\label{FNptheorem}
Fix $\lambda\in\R$ and suppose that $p(N)$ is a sequence such that $p=N^{-1}+(\lambda+o(1))N^{-4/3}$ as $N\to\infty$.
Consider the sequence of random forests $F(N,p)$. Let $C_1^{F(N,p)}\ge C_2^{F(N,p)}\ge\ldots$ be the sequence of tree sizes in $F(N,p)$, in non-increasing order, augmented with zeros. Then 
\begin{equation}\label{eq:l2convFNp}N^{-2/3}\left(C_1^{F(N,p)},C_2^{F(N,p)},\ldots\right)\tod \mathcal{C}^\lambda\end{equation}
as $N\to\infty$, with respect to the $\ell^2$ topology.
\end{theorem}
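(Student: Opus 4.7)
The natural plan is to deduce Theorem \ref{FNptheorem} from Theorem \ref{FNmtheorem} by conditioning on the number of edges. Since $F(N,p)$ conditioned on $|E(G(N,p))|=m$ is exactly $F(N,m)$, we have $F(N,p)\stackrel{d}{=}F(N,M)$ where
\[
\mathbb{P}(M=m\mid G(N,p)\text{ is acyclic}) \propto f(N,m)\,p^m(1-p)^{\binom{N}{2}-m}.
\]
So the question reduces to understanding the conditional law of $M$, together with some stability of Theorem \ref{FNmtheorem} under replacing a deterministic $m$ by this random $M$.

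The first and main step is to analyse the above conditional distribution via Britikov's asymptotics \eqref{eq:Britikovresult} together with Stirling's formula. Writing $m=N/2+yN^{2/3}$, one obtains an expression in which a slowly-varying factor $g(2y)$ (which changes appreciably only on scale $O(1)$ in $y$) multiplies a Gaussian-type factor whose log has a quadratic term in $y$ of order $N^{1/3}$. The Gaussian factor dominates and produces sharp concentration: $(M-N/2)/N^{2/3}$ converges in probability to a deterministic value with fluctuations of order $O(N^{-1/6})$, and the saddle-point computation pins this deterministic value to the $\lambda$ appearing in the statement, consistently with the parametrisation chosen in Theorem \ref{FNmtheorem}.

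Once this concentration is established, Theorem \ref{FNmtheorem} applied to $F(N,m)$ along a deterministic sequence $m=N/2+(\lambda+o(1))N^{2/3}$ yields $N^{-2/3}(C_1^{F(N,m)},C_2^{F(N,m)},\ldots)\tod\mathcal{C}^\lambda$ in the $\ell^2$ topology. A Slutsky/coupling argument then transfers this limit from the deterministic $m$ to the random $M$: because $M$ differs from its deterministic counterpart by $O(N^{1/2})=o(N^{2/3})$, the effect on rescaled tree sizes is asymptotically negligible in $\ell^2$, and one obtains \eqref{eq:l2convFNp}.

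The principal obstacle is the saddle-point step itself: one must balance contributions from the Britikov factor (involving the stable density $g$), the Stirling expansion of $(N-m)!$, and the Binomial-type factor $p^m(1-p)^{\binom{N}{2}-m}$ — each entering at a different order in $N$ — carefully enough to locate the peak of the conditional law of $M$ with the precision required to match the target parameter $\lambda$ exactly. Additional care is needed to ensure $\ell^2$-tightness is preserved under the conditioning, but this should be a routine consequence of $\ell^2$-tightness in Theorem \ref{FNmtheorem} uniform over $m$ in a shrinking window around the identified peak.
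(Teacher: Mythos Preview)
Your proposal reverses the paper's logical direction. In this paper, Theorem \ref{FNptheorem} is proven \emph{directly}: the bulk of the work (Sections \ref{Zncvgsection}--\ref{alpharegularity}) establishes convergence of the exploration process of $F(N,p)$ to the diffusion $Z^\lambda$, and Section \ref{cptsizes} then extracts the component sizes. Theorem \ref{FNmtheorem} is deduced \emph{from} Theorem \ref{FNptheorem} in Section \ref{mtopsection}, not the other way round. So invoking Theorem \ref{FNmtheorem} here is circular. The reason for this ordering is that the exploration process of $F(N,p)$ is a Markov chain whose transitions can be analysed via the stack-forest combinatorics of Lemma \ref{incrdescription}; the analogous process for $F(N,m)$ would be considerably less tractable.

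Were Theorem \ref{FNmtheorem} available independently, your outline would essentially work, and in fact this direction of implication is the easier one. The concentration of $M$ around $N^2p/2$ with $O(N^{1/2})$ fluctuations is exactly what the computation in Section \ref{acyclicboundproof} yields (so the saddle-point step you flag as the principal obstacle is already carried out there). One caution on your final step: the phrase ``the effect on rescaled tree sizes is asymptotically negligible'' suggests a coupling of $F(N,m)$ with $F(N,m')$ for nearby $m,m'$ under which the component sequences are close; but as noted in Section \ref{subsubsec:intro-monotonicity}, monotonicity of $F(N,m)$ in $m$ is open, so no such coupling is immediately available. Fortunately none is needed in this direction: since Theorem \ref{FNmtheorem} applies to \emph{every} sequence $m(N)=N/2+(\lambda+o(1))N^{2/3}$, a subsequence argument gives $\E{f(N^{-2/3}\mathcal{C}^{F(N,m)})}\to\E{f(\mathcal{C}^\lambda)}$ uniformly over such $m$ for any bounded continuous $f$ on $\ell^2$, and one then simply averages over the conditional law of $M$. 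By contrast, the paper's reverse implication $F(N,p)\Rightarrow F(N,m)$ genuinely requires the ``almost monotone'' coupling of Lemma \ref{lem:almostmonotone}, because there one must sandwich a single $F(N,m)$ between two $F(N,p^\pm)$.
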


We will work mostly in the context of the model $F(N,p)$. In Section \ref{mtopsection} we will deduce separately that Theorem \ref{FNptheorem} for $F(N,p)$ implies Theorem \ref{FNmtheorem} for $F(N,m)$. 

\subsection{Exploration processes}\label{subsec:intro-exploration}
As was the case for Aldous's 
Proposition \ref{prop:AldousGNp}, our 
proof of Theorem \ref{FNptheorem} 
is based on an analysis of
the \textit{exploration process} of the graph. 
To be specific, we will work with a breadth-first ordering (although the argument
would work equally well with various other orderings). 

Let $G$ be any graph with vertex set $[N]$. We define
the breadth-first ordering $v_1, v_2, \dots, v_N$ in the following way. 
For a vertex $v$, let $\Gamma(v)$ be the set of neighbours of $v$
in $G$. For each $n=0,1,\dots, N$, denote
\[
\mathcal{Z}_n:=\Gamma(v_1)\cup\ldots\cup\Gamma(v_n) \backslash\{v_1,\ldots,v_n\}.
\]
Now recursively, for each $n=0,1,\dots, N-1$:
\begin{itemize}
\item if $|\mathcal{Z}_n|=0$, then let $v_{n+1}$ be the smallest element of $[N]\setminus \{v_1,\dots, v_n\}$, and let $v_{n+2},\ldots,v_{n+1+|\Gamma(v_{n+1})|}$ be the elements of $\Gamma(v_{n+1})$, in increasing order;
\item if $|\mathcal{Z}_n|=r>0$, then let
$\{v_{n+r+1}, \dots, v_{n+r+a}\}$ 
be the elements of 
$\mathcal{Z}_{n+1}\setminus \mathcal{Z}_n$ 
in increasing order,
where $a=|\mathcal{Z}_{n+1}\setminus \mathcal{Z}_n|$.
\end{itemize}
Note that $\mathcal{Z}_0=\emptyset$, $v_1=1$, and $\mathcal{Z}_1=\Gamma(1)$.

We can interpret the construction as follows. 
We imagine exploring the graph one vertex at a time,
revealing neighbours as we proceed.
$\mathcal{Z}_n$ is the \textit{stack} after step $n$, consisting 
of the vertices that we have seen but not yet processed. 
At the next step $n$, if there are any vertices on the stack,
we process the one which was added earliest (namely $v_{n+1}$), removing
it from the stack and adding to the stack all its neighbours that have 
not previously been seen. If instead the stack is empty, 
we select a new vertex (the smallest-labelled vertex that
has not previously been seen) and process that vertex in the same way.

We define the \textit{reflected exploration process} $(Z_n)_{n\geq 0}$
by $Z_n=|\mathcal{Z}_n|$. If we define $0=n_0, n_1, \dots, n_C=N$ to be the
times $n$ such that $Z_n=0$, written in increasing order, then the 
components of $G$ are 
$\{v_{n_i+1}, v_{n_i+2}, \dots, v_{n_{i+1}}\}$ for $i=0,\dots, C-1$. 
In this way we can interpret the component sizes of $G$ as the 
lengths of excursions from 0 of the reflected exploration process. 

The strategy of proof of Theorem \ref{FNptheorem} is now to show that
the exploration processes $Z^{N,p}$ of the forests $F(N,p)$, 
in the regime of Theorem \ref{FNptheorem},
converge as $N\to\infty$, when suitably rescaled, 
to the diffusion $Z^\lambda$, in such a way 
that the rescaled lengths of the longest excursions of $Z^{N,p}$ converge
to the lengths of the longest excursions of $Z^\lambda$. Our main 
convergence result is the following:
\begin{theorem}\label{explconvtheorem}With $\lambda,(p(N))$ as in Theorem \ref{FNptheorem}, let $(Z^{N,p}_n)_{n\ge 0}$ be the reflected exploration process of $F(N,p)$. For $s\ge 0$, set
\begin{equation}\label{eq:defntildeZ}\tilde Z^{N,p}_s:= N^{-1/3}Z^{N,p}_{\fl{N^{2/3}s}}.\end{equation}
Then we have $\tilde Z^{N,p} \stackrel{d}\rightarrow Z^\lambda$, uniformly on compact time-intervals.
\end{theorem}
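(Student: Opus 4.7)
The plan is to identify the limit via a martingale problem formulation for the SDE \eqref{eq:ZSDE}. Writing the rescaled increments of $\tilde Z^{N,p}$ as the sum of a predictable compensator and a martingale difference, the strategy is to show (a) tightness of $\{\tilde Z^{N,p}\}$ in $C([0,T])$; (b) the compensator converges to the drift $\int_0^\cdot [\lambda - s - \alpha(\tilde Z(s),\lambda - s)]\,ds$ plus a reflection term at zero; (c) the martingale part has predictable quadratic variation tending to $t$, so converges to a standard Brownian motion. Uniqueness of solutions to \eqref{eq:ZSDE}, from Proposition \ref{Zlambdaexists}, then identifies any subsequential limit as $Z^\lambda$.

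The central computation is the conditional distribution of the one-step increment. With $Z^{N,p}_n = z > 0$, processing the stack vertex $v_{n+1}$ produces $\eta_{n+1} := Z^{N,p}_{n+1} - Z^{N,p}_n + 1$ new children, all among the $N - n - z$ as-yet-unseen vertices. Conditional on $\mathcal{F}_n$, the unrevealed edges form an independent collection of Bernoulli$(p)$ variables on the remaining potential edges, further conditioned on the combined graph being a forest; equivalently, conditioned on the unrevealed graph being a forest in which the $z$ stack vertices lie in distinct components (any two stack vertices share a revealed path through their common ancestor in the current tree, so a direct or indirect connection in the unrevealed part would create a cycle). Summing over the total number of remaining edges and applying Britikov's asymptotic formula \eqref{eq:Britikovresult} in both numerator and denominator, one obtains an expression for $\Prob{\eta_{n+1} = j \mid \mathcal{F}_n}$ as a ratio of integrals of $g$ against explicit binomial and exponential factors.

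Substituting $n = \lfloor N^{2/3} s\rfloor$ and $z = \lfloor N^{1/3} b\rfloor$ and performing a Laplace-type asymptotic analysis, the leading order of $\mathbb{E}[\eta_{n+1}\mid\mathcal{F}_n]$ is $1$, the next order reproduces Aldous's Erd\H{o}s--R\'enyi drift $(\lambda - s)N^{-1/3}$, and the subleading correction extracts $-\alpha(b, \lambda - s)N^{-1/3}$. In the limiting integrals \eqref{eq:defnalpha}, the parameter $a$ plays the role of the scaled number of future edges remaining in the forest, the factor $g(\lambda-a)\exp(\tfrac{(\lambda-a)^3}{6})$ is precisely the Britikov weight from \eqref{eq:Britikovresult} applied to the residual forest, and the Gaussian factor $a^{-3/2}\exp(-b^2/(2a))$ is (up to normalisation) the Brownian first-passage density from height $b$ to zero at time $a$, which arises asymptotically as the weight on the scaled length of the current excursion given that the stack height is $b$. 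A parallel but easier computation of the second moment yields $\mathrm{Var}(\eta_{n+1}\mid\mathcal{F}_n) \to 1$, consistent with a Brownian limit for the martingale part, and Lemma \ref{gregularitylemma} ensures the limiting $\alpha(b,\lambda-s)$ is continuous and well-behaved enough to pass to the limit.

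I expect the main obstacle to be exactly this enumeration asymptotic: establishing convergence of the ratio of Britikov integrals to $\alpha(b,\lambda-s)$ \emph{uniformly} over compact ranges of $(b,s)$, with tight error control, so that the integrated drift of $\tilde Z^{N,p}$ converges uniformly on $[0,T]$. Once this is in place, tightness in $C([0,T])$ follows from standard moment estimates on increments (the drift becomes strongly negative as $s$ grows, preventing runaway), and the reflection at zero of the discrete process carries over to the local-time term of \eqref{eq:ZSDE}. When a new excursion begins at $Z^{N,p}_n = 0$, the initial jump is the degree of the new root, which is $O(1)$ with high probability, hence $o(N^{1/3})$, and so contributes in the limit only via the local time $K^\lambda$.
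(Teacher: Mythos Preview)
Your overall strategy is sound and essentially parallel to the paper's. Both approaches reduce Theorem~\ref{explconvtheorem} to the same core computation: uniform asymptotics for the conditional mean and variance of the one-step increment, giving drift $(\lambda - s - \alpha(b,\lambda-s))N^{-1/3}$ and variance $1+o(1)$ (this is exactly Proposition~\ref{limdriftprop}). Where you diverge is in the packaging of the convergence step. You propose a direct martingale-problem argument (tightness, compensator convergence, quadratic variation), whereas the paper invokes a black-box result of Stroock and Varadhan on convergence of Markov chains to reflected diffusions (Theorem~\ref{altSVreflthm}), after first localising to handle the unbounded, non-globally-Lipschitz drift. Your route is more hands-on and self-contained; the paper's route outsources the reflection-at-zero and tightness issues to the general theory, at the cost of having to verify a fourth condition (\ref{eq:limnonsticky}) ensuring the chain does not get stuck at the boundary.

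One point of interpretation to correct: in the integral defining $\alpha$, the variable $a$ is \emph{not} the scaled number of future edges in the whole forest. In the paper's derivation (Lemma~\ref{Anrklemma} and Section~\ref{Estack}), $a = k/N^{2/3}$ where $k$ is the size of the \emph{stack forest}---the union of the trees rooted at the current stack vertices in the yet-unexplored graph. This is, as you say later in the same paragraph, the scaled remaining length of the current excursion, and your Brownian first-passage heuristic for the factor $a^{-3/2}\exp(-b^2/(2a))$ is then exactly right. The paper, however, does not derive that factor probabilistically: it comes out of Cayley's formula $rk^{k-r-1}$ for rooted labelled forests combined with Stirling's approximation, and the Britikov weight $g(\lambda-s-a)$ is applied to the residual forest on the $N'-k$ vertices \emph{outside} the stack forest. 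Getting this decomposition right is what makes the uniformity you flag as the main obstacle tractable.
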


Let us try to give some intuition for this result
and for the role played by the function $\alpha$, by comparing the behaviour of the exploration processes
for $G(N,p)$ and for $F(N,p)$, for $p=N^{-1}+\lambda N^{-4/3}$.

We first recall the heuristic for the 
scaling in the $G(N,p)$ case. The exploration process $Z_n, n\geq 0$ is a Markov chain. Condition on $Z_n$, the size of the stack after $n$ steps, being equal to $r>0$ and consider the distribution of
$Z_{n+1}-Z_n+1$, which is 1 more than the next increment of the process. This quantity is the number of neighbours
that the vertex $v_n$ has in 
$[N]\setminus\{v_1,\dots,v_{n+r}\}$,
and it has $\Bin{N-n-r}{p}$ distribution, with 
mean $(N-n-r)p$. 

If we write $n=tN^{2/3}$ and $r=bN^{1/3}$, 
the mean of that increment is then
\[
(N-tN^{2/3}-bN^{1/3})(N^{-1/3}+\lambda N^{-4/3})-1
\]
which (for $t$ and $b$ of constant order)
is approximately $(\lambda-t)N^{-1/3}$. 
Meanwhile the variance is $1+O(N^{-1/3})$. 
If we rescale time by a factor $N^{2/3}$ and space by a factor $N^{1/3}$,
we converge to a process with drift $\lambda-t$ and 
variance $1$ per unit time, namely the diffusion 
$B^\lambda$ of Proposition \ref{prop:AldousGNp}.

Now consider instead the exploration process for $F(N,p)$,
which is $G(N,p)$ conditioned to be acyclic. 
We will see in Section \ref{exploration-transitions} 
that the exploration process is still a 
Markov chain, but the acyclicity condition changes
the distribution of the increments. Suppose again $Z_n=r$, 
so that the current stack is $\{v_{n+1},\dots,v_{n+r}\}$. 
These stack vertices are already known to be 
in the same component of the graph. For the graph to remain acyclic, we now require that the subgraph induced by the vertices $\{v_{n+1}, \dots, v_N\}$ is a forest, and furthermore no two of the stack vertices are in the same tree of this forest. As a result of this conditioning, 
the quantity $Z_{n+1}-Z_n+1$ no longer has $\Bin{N-n-r}{p}$ distribution as in the $G(N,p)$ case just discussed, but is stochastically dominated by $\Bin{N-n-r}{p}$; furthermore, 
the downward bias produced is stronger when the stack size $r$ is higher. 

What we establish is that, in the same regime as above, this bias produces a change in the expected increment which is again of order $N^{-1/3}$ and depends on the size of the stack. After rescaling as above, the drift obtained is now 
instead $\lambda-t-\alpha(b, \lambda-t)$, leading to 
the diffusion $Z^\lambda$ defined at (\ref{eq:ZSDE}).

The particular convergence properties that we need in order 
to obtain Theorem \ref{explconvtheorem} are collected
in the following result:

%In the following two results, we will fix $\lambda\in\R$ 
%and work with $F(N,p)$ where $p(N)$ satisfies the 
%conditions of Theorem \ref{FNptheorem}. As we will see 
%shortly, each $(Z^{N,p})$, the exploration process of 
%$F(N,p)$, is Markov. A general framework for showing 
%convergence of Markov processes to the solutions of SDEs 
%was introduced by Stroock and Varadhan (see, for example, 
%\cite{SVbook}), using convergence of generators. Roughly 
%speaking, it is necessary to check that the expected 
%increments of $\tilde Z^{N,p}$ converge to the drift of $Z^
%\lambda$, and some other regularity results to ensure 
%tightness and non-sticky reflection.

\begin{prop}\label{limdriftprop}
Fix $\lambda\in\R$, and let $p=p(N)$ satisfy the conditions of Theorem \ref{FNptheorem}. 
For each $N\in\N$, the reflected exploration process $Z^{N,p}$ of $F(N,p)$ is a Markov chain. 
Further, fix any $T, K<\infty$ and 
$\delta>0$.
Then, uniformly on $n\in [0,TN^{2/3}]$ and $r\in[1,K N^{1/3}]$,
\begin{gather}\label{eq:limdrift} 
N^{1/3}\E{Z^{N,p}_{n+1}-Z^{N,p}_n \, |\, Z^{N,p}_n=r}-\left[ \lambda -\tfrac{n}{N^{2/3}}+ \alpha\left(\tfrac{r}{N^{1/3}},\lambda - \tfrac{n}{N^{2/3}}\right)\right]  \rightarrow 0,
\\
%\label{varjumpprop}{\bf With the same notation and
%conditions as the previous theorem,} and for any 
%$\delta >0$ uniformly on $m\in [0,TN^{2/3}]$ 
%and $r\in[1,\rho N^{1/3}]$,
\label{eq:limvar} 
\E{\left[Z^{N,p}_{n+1}-Z^{N,p}_n\right]^2 \, \Big|\, Z^{N,p}_n=r}\rightarrow 1,\\
\label{eq:limnojumps} 
N^{2/3}\Prob{\big|Z^{N,p}_{n+1}-Z^{N,p}_n \big|> \delta N^{1/3}\,\Big| \, Z^{N,p}_n=r} \rightarrow 0,
\end{gather}
as $N\rightarrow\infty$. In addition,
\begin{equation}\label{eq:limnonsticky}
\liminf_{N\rightarrow\infty}\inf_{n\in[0,TN^{2/3}]}\E{\left[Z^{N,p}_{n+1}\right]^2 \, \Big|\, Z^{N,p}_n=0}>0.
\end{equation}
\end{prop}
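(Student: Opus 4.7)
The plan is to derive an explicit transition formula for the Markov chain $(Z^{N,p}_n)$ and then analyse it asymptotically using Britikov's formula. Given the exploration history through step $n$, all edges incident to $\{v_1,\ldots,v_n\}$ are revealed; the remaining unrevealed edges lie among $\{v_{n+1},\ldots,v_N\}$, and under $F(N,p)$ must form a forest $H$ in which the $r$ stack vertices lie in distinct trees of $H$ (otherwise combination with the revealed structure creates a cycle). A direct computation shows the conditional law places weight proportional to $q^{|E(H)|}$ on each admissible $H$, with $q:=p/(1-p)$; and by exchangeability of the labels in $\{v_{n+1},\ldots,v_N\}$ this law depends only on $(n,r)$, yielding the Markov property. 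Decomposing $H$ by removing $v_{n+1}$ and its $Y$ incident edges --- which all lie in $U_n:=\{v_{n+r+1},\ldots,v_N\}$ of size $u:=N-n-r$, since any edge to another stack vertex would create a cycle --- gives
\begin{equation*}
\Prob{Z^{N,p}_{n+1}-Z^{N,p}_n=y-1\mid Z^{N,p}_n=r}=\binom{u}{y}\,q^y\,\frac{\Phi(N-n-1,\,r-1+y)}{\Phi(N-n,\,r)},\quad y=0,\ldots,u,
\end{equation*}
where $\Phi(N',s):=\sum_k q^k h(N',s,k)$ and $h(N',s,k)$ counts labelled forests on $N'$ vertices with $k$ edges in which $s$ specified vertices lie in distinct trees. (For $r=0$ a similar derivation yields $\Prob{Z^{N,p}_{n+1}=y\mid Z^{N,p}_n=0}=\binom{N-n-1}{y}q^y\Phi(N-n-1,y)/F(N-n)$, with $F(M):=\sum_k q^k f(M,k)=\Phi(M,0)$.)

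The heart of the proof is the asymptotic analysis of $\Phi$. A rooted-forest decomposition, using the classical identity that $s\,a^{a-s-1}$ counts labelled forests on $a$ vertices consisting of $s$ rooted trees with prescribed roots, gives
\begin{equation*}
\Phi(N',s)=\sum_{a\ge s}\binom{N'-s}{a-s}\,s\,a^{a-s-1}\,q^{a-s}\,F(N'-a).
\end{equation*}
Substituting Britikov's formula (\ref{eq:Britikovresult}) for $f$, applying Stirling, and rescaling via $a=\tau N^{2/3}$, $s=bN^{1/3}$, $n=tN^{2/3}$, Laplace's method converts the sum into an integral dominated by $\tau$ of constant order. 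After collecting $N$-dependent factors one obtains
\begin{equation*}
\Phi(N-n,r)\sim\kappa(N)\int_0^\infty \tau^{-3/2}\,g(\lambda-t-\tau)\,\exp\!\left(\tfrac{(\lambda-t-\tau)^3}{6}\right)\exp\!\left(-\tfrac{b^2}{2\tau}\right)d\tau
\end{equation*}
for an explicit prefactor $\kappa(N)$; this is exactly the denominator of $\alpha(b,\lambda-t)$. The same analysis applied to $\Phi(N-n-1,r-1+y)$, combined with the combinatorial identity $y\binom{u}{y}q^y=uq\binom{u-1}{y-1}q^{y-1}$ (which shifts the second argument of $\Phi$ by $+1$ and, in the rescaled Laplace integral, absorbs one power of $\tau$, turning $\tau^{-3/2}$ into $\tau^{-1/2}$), yields the numerator integrand of $\alpha$ in the first-moment ratio.

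Granted these asymptotics, the four claims follow. For the drift (\ref{eq:limdrift}), writing $\E{Z^{N,p}_{n+1}-Z^{N,p}_n\mid Z^{N,p}_n=r}=\E{Y\mid Z^{N,p}_n=r}-1$ and invoking the first-moment Laplace computation gives $\E{Y}=1+(\lambda-t-\alpha(b,\lambda-t))N^{-1/3}+o(N^{-1/3})$ uniformly on the stated ranges. For the variance (\ref{eq:limvar}), a parallel analysis of the factorial moment $\E{Y(Y-1)}$ shows $Y$ is asymptotically Poisson$(1)$, so $\E{(Y-1)^2}\to 1$. For the no-large-jumps estimate (\ref{eq:limnojumps}), the bound $\binom{u}{y}q^y\le(uq)^y/y!\lesssim C^y/y!$ combined with the monotonicity $\Phi(N-n-1,r-1+y)\le\Phi(N-n-1,r-1)$ (more specified roots can only reduce the forest count) yields $\Prob{Y=y}\lesssim (C')^y/y!$, which is super-exponentially small for $y\ge\delta N^{1/3}$. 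Finally, for the non-stickiness (\ref{eq:limnonsticky}), the $r=0$ formula gives $\Prob{Z^{N,p}_{n+1}=1\mid Z^{N,p}_n=0}=(N-n-1)q\,F(N-n-1)/F(N-n)$, bounded below by a positive constant uniformly on $n\in[0,TN^{2/3}]$ via the Laplace form of $F$, so $\E{(Z^{N,p}_{n+1})^2\mid Z^{N,p}_n=0}\ge c>0$.

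The principal technical obstacle is the Laplace-method step: one must apply Britikov's asymptotic uniformly over the relevant parameter range (particularly near the boundaries $\tau\downarrow 0$ and $\tau\uparrow\lambda-t+O(1)$, where the approximation degenerates), bookkeep lower-order corrections when forming the ratio $\Phi(N-n-1,r-1+y)/\Phi(N-n,r)$ carefully enough to isolate the order-$N^{-1/3}$ drift correction, and verify that the shift of the exponent of $\tau$ from $-3/2$ to $-1/2$ in the numerator integrand of $\alpha$ emerges precisely from the first-moment combinatorial identity.
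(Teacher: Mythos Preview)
Your overall strategy matches the paper's: derive the transition law via the separation condition, express the weighted count $\Phi(N',s)$ by decomposing on the stack-forest size using Cayley's formula, feed in Britikov's asymptotic, and extract drift, variance, jump and reflection estimates. In fact your combinatorial identity yields a tidier route to the drift than the paper's generating-function calculation in Section~\ref{Etiltsection}: summing $y\binom{u}{y}q^y\Phi(N{-}n{-}1,r{-}1{+}y)$ over $y$ and recognising the shifted sum as the recursion for $\Phi(N-n,r+1)$ gives the exact identity
\[
\E{Y\mid Z_n=r}\;=\;uq\cdot\frac{\Phi(N-n,\,r+1)}{\Phi(N-n,\,r)}\,.
\]

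The gap is in your explanation of why this ratio equals $1-\alpha(b,\lambda-t)N^{-1/3}+o(N^{-1/3})$. You claim that incrementing the second argument of $\Phi$ by one ``absorbs one power of $\tau$, turning $\tau^{-3/2}$ into $\tau^{-1/2}$'' in the Laplace integrand. This is false. Both $\Phi(N',s)$ and $\Phi(N',s+1)$ have the \emph{same} leading Laplace integrand $\tau^{-3/2}g(\lambda-t-\tau)\exp(\cdots)\exp(-b^2/2\tau)$; the ratio of the summands at a fixed stack-forest size $a$ is $\tfrac{a-s}{N'-s}\cdot\tfrac{s+1}{s}\cdot\tfrac{1}{aq}=1+O(N^{-1/3})$, with no change in the power of $\tau$. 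So $\Phi(N',s+1)/\Phi(N',s)=1+O(N^{-1/3})$, and identifying that $O(N^{-1/3})$ coefficient as $-\alpha$ is precisely the content of the drift estimate --- it does not fall out of the leading-order Laplace formula.

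The paper supplies this via a step you have skipped (Lemma~\ref{Estacklemma}). Conditionally on $[s]$ being separated in $F(N',p)$, the probability that vertex $s+1$ is also separated equals $(N'-\E{k_s})/(N'-s)$, where $k_s$ is the stack-forest size; hence
\[
\frac{\Phi(N',s+1)}{\Phi(N',s)}=\frac{N'-\E{k_s\mid [s]\text{ separated}}}{N'-s}\,.
\]
It is in computing $\E{k_s}$ that the $\tau^{-1/2}$ integral legitimately appears: the numerator $\sum_a a\,\Prob{\mathcal A_{N',s,a}}$ carries an extra factor $a=\tau N^{2/3}$, converting $\tau^{-3/2}$ to $\tau^{-1/2}$, and the ratio of these two Laplace integrals is exactly $\alpha(b,\lambda-t)$. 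The paper must also control the tail contribution from stack forests of size $\gg N^{2/3}$ (where Britikov's asymptotic is no longer uniform) via a separate geometric-ratio bound (Lemma~\ref{geomratiolemma}); this is another ingredient your sketch does not address. With these pieces in place, your identity for $\E{Y}$ combined with $uq=1+(\lambda-t)N^{-1/3}+o(N^{-1/3})$ gives the drift directly, and your treatments of variance, jumps and reflection are essentially correct (the paper's reflection argument is marginally simpler, using only the ratio $\Prob{Z_{n+1}=1}/\Prob{Z_{n+1}=0}=(N-n-1)p/(1-p)\to 1$).
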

Here (\ref{eq:limdrift}) 
and (\ref{eq:limvar}) give the required convergence of
the mean and variance of the increments respectively. 
Then (\ref{eq:limnojumps}) will imply that the limit process
does not have jumps, and finally (\ref{eq:limnonsticky})
ensures that the limit process reflects appropriately
at zero.

%The proof of this proposition is completed in Section \ref{Etiltsection}, after some preliminary asymptotic calculations concerning forests in random graphs.
%
%\par
%It is also necessary to establish the convergence of the variance of the rescaled increments, and regularity properties that ensure the limit process is continuous and does not stick at zero.
%
%
%The proof of this proposition occupies Section \ref{varjumpproof}.

\par
In Section \ref{SDEsection}, we show that 
Proposition \ref{limdriftprop}
is sufficient to imply Theorem \ref{explconvtheorem}. The main ingredient will be Theorem \ref{altSVreflthm}, a special case of Stroock and Varadhan's very general results \cite{SVDiffBCs} on the convergence of Markov processes to reflected diffusions.

We mention one further technical point which causes 
extra complication in the proof of Theorem \ref{FNptheorem},
compared to that of Aldous's Proposition \ref{prop:AldousGNp}.
We will need to go slightly beyond Theorem \ref{explconvtheorem}
in showing that the excursions of the discrete exploration process (whose lengths are the tree sizes of the forest) converge appropriately, after rescaling, to the excursions 
of $Z^\lambda$. 
%It is reasonably straightforward to show
%that, with high probability, no large trees appear late
%enough in the exploration that they are not represented in the limit process. 
To do so, we need to exclude the possibility that zeros of $Z^\lambda$ arise only
as the limits of small \textit{positive} local minima of the 
discrete processes; for this we will use the fact that, conditional on its vertex set, a tree appearing in $F(N,p)$ is a uniform random tree, whose exploration process we can approximate by a Brownian excursion. 
(In the case of 
Proposition \ref{prop:AldousGNp},
the limiting diffusion $B^\lambda$ is 
homogeneous in space; hence
Aldous was able to work instead with the unreflected
process, and correspondingly with a slightly different version of the exploration process, whose height at step $m$ is equal to the stack size minus the number of complete components already explored. Then one only needs to show that excursions above the running minimum of the discrete processes converge to excursions above the running minimum of the diffusion, which follows easily from the uniform convergence of the paths.)

\subsection{Discussion}
Before embarking on the proof of our main results, 
we discuss various aspects of the ensembles $F(N,p)$ and
$G(N,p)$, the limiting diffusion processes $Z^\lambda$ and $B^\lambda$, 
and other related models. 
 
\subsubsection{Excursions of $B^\lambda$ and of $Z^\lambda$}
Just as for the process $B^\lambda$, 
the excursions of $Z^\lambda$ occur in size-biased order. This property is inherited from the discrete exploration processes -- since the graph is exchangeable, the exploration visits the components in size-biased order. 

We also have that, conditional on their length, the excursions of $Z^\lambda$ are Brownian excursions. That is, if we condition on the set of excursion intervals of the process, the paths of the process on these intervals are independent Brownian excursions. This follows from the fact that the trees of $F(N,p)$ are uniformly distributed, given their vertex sets, and the fact that the exploration process of a uniform tree converges in distribution to a Brownian excursion \cite{LeGall05}. For the process $B^\lambda$, in contrast, the excursions are Brownian excursions weighted
by the exponential of their area
(see \cite{Aldous97}); relative to Brownian excursion, the higher drift at the beginning and lower drift at the end of the interval favours excursions with higher area in $B^\lambda$, but in $Z^\lambda$ this bias turns out to be precisely cancelled by the negative contribution to the drift from the $\alpha$ term.
These properties of size-biased ordering and Brownian excursions are not at all obvious from the definition of $Z^\lambda$. It is interesting to ask whether there are other diffusions which have both these properties. 

\subsubsection{Monotonicity properties}
\label{subsubsec:intro-monotonicity}
It's straightforward that the edge set of $G(N,p)$ is 
stochastically dominated by that of $G(N,p')$ when $p<p'$;
similarly we have stochastic domination of $G(N,m)$ 
by $G(N,m')$ for $m<m'$. 

However, there seems no obvious argument leading to analogous properties to hold for the families $F(N,p)$ and $F(N,m)$; as far as we know, the question of whether these monotonicity properties hold is open. 

From the combinatorial calculations that we use
to estimate the probability that $G(N,p)$ is
acyclic,
we obtain that in the critical window,
the number of edges in 
the forest $F(N,p)$ 
typically behaves like $N^2p/2+O(N^{1/2})$.
(In fact, much more strongly, one could immediately obtain
that the local central limit theorem for the number of
edges is the same in $F(N,p)$ as in $G(N,p)$). 
%In Proposition \ref{prop:locallimit} in Section
%\ref{mtopsection}, we prove a local central limit
%theorem for the number of edges in the graph $F(N,p)$. 
If we also had a monotonicity result, it would then be 
easy to deduce Theorem \ref{FNmtheorem} for $F(N,m)$ from
Theorem \ref{FNptheorem} for $F(N,p)$, using a simple sandwiching argument. Without it, we need to work a little harder. In Lemma \ref{lem:almostmonotone} in 
Section \ref{mtopsection}, we prove an `almost monotonicity result': for parameters in an appropriate range, we can couple a sequence of random forests 
with different numbers of edges in such a way that, 
with high probability, the edge sets are indeed monotonic. 

\subsubsection{$\lambda\to\infty$, the supercritical phase, and random planar graphs}
In the subcritical regime, the behaviours of $G(N,m)$ and $F(N,m)$ are very similar. Consider for example $m\sim cN$ 
where $0<c<1/2$. Then with probability bounded away from $1$ as $N\to\infty$, the graph $G(N,m)$ is itself acyclic. 
In both models, the size of the largest component is on the order of $\log N$. 
More broadly, the results of {\L}uczak and Pittel \cite{LuczakPittel92} 
indicate that the scaling limit for the largest components
is the same for the two models whenever $N/2-m\gg N^{2/3}$.

However the supercritical behaviour of $G(N,m)$ and $F(N,m)$ is very different. First consider the regime where 
$m\sim cN$ where $c > 1/2$. For both models, we see a single ``giant component" of linear size, and the 
second-largest component has sub-linear size. In $G(N,m)$, the second-largest component has size $O(\log N)$; we 
have the well-known ``duality" property whereby, 
once the giant component is removed, the rest of the graph
looks like a subcritical random graph. 
For $F(N,m)$, on the other hand,  
\cite{LuczakPittel92} show that the size of the second-largest tree (and, in fact, of the $k$th-largest for any $k\geq 2$) is on the order of $N^{2/3}$. The number of
vertices outside the giant tree is sufficiently large that
the remainder of the graph looks
\textit{critical} rather than sub-critical.

The authors of \cite{LuczakPittel92} also show (in Theorem 5.1) a distributional scaling limit for the $O(N^{2/3})$ fluctuations of the size of the giant tree around its mean. A supercritical random forest without its giant tree can then be treated as a critical forest with random criticality parameter $\lambda$.

As a consequence, for the models $F(N,m)$ and $F(N,p)$, the scaling limits described in terms of the excursions of the diffusion remain relevant in describing the $k$th largest components for $k\geq2$ in the supercritical regime as well as in the critical window. Although we do not state such a result here, one can show that the scaling limit for these components is (up to a uniform multiplicative correction) a \emph{mixture} of the distributions obtained in our main theorem.

The different behaviour between the random graph model and the random forest model is already visible ``at the top of the scaling window". Suppose $m=N/2+s$ where $N^{2/3}\ll s\ll N$.
For the random graph case \cite{BB84}, one has $|C_1^{G(N,m)}|\sim (4+o(1))s$ and $|C_2^{G(N,m)}|=o(N^{2/3})$. In the random 
forest, the largest component grows approximately half as quickly, with $|C_1^{F(N,m)}|\sim (2+o(1))s$, and for $k\geq 2$, $|C_k^{F(N,m)}|$ remains on the order of $N^{2/3}$.

We can also see the difference between the two models reflected in the behaviour of the diffusion processes $B^\lambda$ and $Z^\lambda$, as $\lambda\to\infty$.
When $\lambda$ becomes large, $B^\lambda$ typically  
has a single large excursion, which begins at time $o(1)$ and ends at time $2\lambda\pm o(1)$. At the end of the excursion, the drift of the process is $-\lambda+o(1)$,
and all subsequent excursions are very small. On the other hand, one can show from Theorem \ref{explconvtheorem} that the large excursion of $Z^\lambda$ again begins at time $o(1)$, but it is roughly half as long as for $B^\lambda$, ending at time 
$\lambda\pm O(1)$. At this time, the drift of the process is $O(1)$, and the next largest excursions remain of constant order as $\lambda\to\infty$. 

Another related model is that of the \textit{random planar graph} $P(N,m)$, uniformly
chosen from all planar graphs on $[N]$ with $m$ edges, which in a sense 
interpolates between $F(N,m)$ and $G(N,m)$.  
Kang and {\L}uczak \cite{KangLuczak2012} analysed the behaviour of $P(N,m)$ in 
various regimes, including the critical window $m=N/2+O(N^{2/3})$. 
In this window, the largest components of $P(N,m)$ 
are again on the scale of $N^{2/3}$, and
towards the top of the window, the scaling is similar to that of $F(N,m)$ rather 
than $G(N,m)$; if $m=N/2+s$ with $N^{2/3}\ll s\ll N$, 
then $|C_1^{P(N,m)}|\sim (2+o(1))s$, and for
$k\geq 2$, $|C_k^{P(N,m)}|=\Theta(N^{2/3})$.
It's interesting to speculate about whether one could also
obtain a scaling limit for the joint distribution of the sizes of the 
largest components of $P(N,m)$ in terms of the excursion lengths of a diffusion.
However, it's not clear whether 
one can formulate an exploration process of the graph $P(N,m)$ which has the Markov property; without this, it would perhaps be less plausible to 
obtain suitable convergence to a diffusion.

\subsection{Plan of the paper}

Section \ref{Zncvgsection} 
is devoted to proving 
Proposition \ref{limdriftprop}.
We show the Markov property for
the exploration process, and
establish the estimates
on the expectation and variance of its jumps,
and the necessary properties concerning
continuity and reflection at 0 of the limiting process.

In order to maintain the flow of the
argument as much as possible, some
of the more involved combinatorial calculations
required for Section \ref{Zncvgsection}
are postponed to Section \ref{section:combinatorial}.

In Section \ref{cptsizes}
we prove that Theorem \ref{explconvtheorem} 
(giving convergence of the exploration 
process on compact time-intervals)
implies Theorem \ref{FNptheorem}
(our main scaling limit result for $F(N,p)$). 

Section \ref{alpharegularity}
covers various technical aspects,
first justifying the regularity properties in Lemma 
\ref{gregularitylemma} and the existence
of the diffusion $Z^\lambda$,
and then applying Stroock and Varadhan's 
general theory for the convergence of Markov processes
to diffusions in order to show that 
Theorem \ref{explconvtheorem} follows from 
Proposition \ref{limdriftprop}.

At this point we have completed the proof
of Theorem \ref{FNptheorem}. Finally Section \ref{mtopsection} is devoted to the coupling
arguments needed to deduce the result for
$F(N,m)$ in Theorem \ref{FNmtheorem} 
from that for $F(N,p)$ in Theorem \ref{FNptheorem}.

%\bigskip
%Section \ref{cptsizes}: 
%Proof that Theorem \ref{explconvtheorem} implies
%Theorem \ref{FNptheorem}. (check that excursion of the
%limiting diffusion are matched by excursions of the 
%discrete exploration process.)
%
%\bigskip
%Section \ref{section:combinatorial}: 
%combinatorial proofs of results from Section 
%\ref{Zncvgsection}, specifically:
%proof of Lemma \ref{acyclicboundlemma} 
%on asymptotics of $f(N,p)$;
%proof of Lemma \ref{Anrklemma} about 
%the probability of cyclicity with certain vertices staying separated;
%proof of Lemma \ref{geomratiolemma} 
%about further asymptotics of probability of acyclicity
%and separation.
%
%\bigskip
%Section \ref{alpharegularity}: regularity of $\alpha$ and $g$; 
%existence of the diffusion $Z^\lambda$; 
%justification from Stroock/Varadhan that Proposition \ref{limdriftprop} implies Theorem \ref{explconvtheorem}. 
%
%\bigskip
%Section \ref{mtopsection}: deducing the result for $F(N,m)$ from the one for $F(N,p)$, ``almost monotone coupling".

\section{Convergence of the reflected exploration process}\label{Zncvgsection}
This section is devoted to the proof of 
Proposition \ref{limdriftprop}.
After collecting a few basic results 
concerning couplings and expected component
sizes for the models $G(N,p)$ and $F(N,p)$,
we turn to the Markov property for the 
exploration process of $F(N,p)$, and give its
transition probabilities. Then we embark
on various combinatorial calculations
concerning the probability of acyclicity
in various critical random graphs. 
Some of the more involved calculations will be completed
in Section \ref{section:combinatorial}.

%\bigskip
%Section \ref{Zncvgsection} 
%proves Proposition \ref{limdriftprop}
%\\
%Section \ref{subsec:couplings}: basic couplings and the 
%Janson/Spencer lemma.
%Section \ref{exploration-transitions}: definitions of separation, stack forest. 
%Markov property and transition probabilities for
%the exploration process. 
%\\
%Section \ref{Pacyclic}: statement of result
%giving asymptotics of $f(N,p)$. 
%Definition of the sets of variables on which 
%we prove uniform estimates. 
%\\
%Section \ref{Estack}: expected size of the stack forest.
%\\
%Section \ref{Etiltsection}: Proof
%of result about convergence of drift.
%\\
%Section \ref{varjumpproof}: Proof of results
%about convergence of variance, continuity, and non-sticky reflection.

\subsection{Proper couplings and estimates}\label{subsec:couplings}
First, we state two standard results, which we will use regularly. The first couples $G(N,p)$ as $p$ varies. The second relates $G(N,p)$ and $F(N,p)$, and follows from Strassen's theorem \cite{Strassen} and the Harris inequality \cite{Harris60}, since acyclity is a decreasing event.

\begin{lemma}\label{GNpcoupling}
For all $N\in\N$, $p\le q\in[0,1]$, there exists a coupling of $G(N,p)$ and $G(N,q)$ such that $E(G(N,p))\subseteq E(G(N,q))$ almost surely.
\end{lemma}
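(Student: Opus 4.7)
The plan is to use the standard monotone coupling via independent uniform random variables indexed by potential edges. Concretely, for each of the $\binom{N}{2}$ possible edges $e$ on the vertex set $[N]$, I would introduce an independent random variable $U_e \sim \mathrm{Unif}[0,1]$, with all the $U_e$ jointly independent. Then I define $G(N,p)$ to be the graph with edge set $\{e : U_e \le p\}$ and $G(N,q)$ to be the graph with edge set $\{e : U_e \le q\}$.

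The two marginal distributions are correct: in each graph, each edge is independently present with the appropriate probability, since $\Prob{U_e \le p} = p$ and similarly for $q$, and distinct edges are independent by the joint independence of the $U_e$'s. The inclusion $E(G(N,p)) \subseteq E(G(N,q))$ then holds pointwise (not merely almost surely) because $p \le q$ implies the event-wise inclusion $\{U_e \le p\} \subseteq \{U_e \le q\}$ for every edge $e$.

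There is no real obstacle here — the argument is a textbook monotone coupling, and the only thing to verify is that the proposed construction indeed realises both marginals correctly on the same probability space, which is immediate from the construction. If one wished to phrase it more abstractly, one could instead invoke Strassen's theorem applied to the product measures on $\{0,1\}^{\binom{N}{2}}$, but the explicit uniform-variable coupling is more transparent and will also be convenient later since it makes the monotone dependence on $p$ available pathwise.
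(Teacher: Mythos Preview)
Your proof is correct; it is the standard monotone coupling via independent uniform edge variables. The paper itself does not give a proof of this lemma, simply citing it as a standard result, so your explicit construction is exactly what is intended.
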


\begin{lemma}\label{barGstochdom}
For all $N\in\N$, $p\in[0,1)$, there exists a coupling of $G(N,p)$ and $F(N,p)$ such that $E( F(N,p)) \subseteq E( G(N,p))$ almost surely.
\end{lemma}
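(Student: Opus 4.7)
The plan is to derive the coupling via Strassen's theorem, which reduces the problem to proving stochastic domination of $F(N,p)$ by $G(N,p)$ with respect to the coordinatewise partial order on $\{0,1\}^{\binom{[N]}{2}}$ (i.e., edge inclusion). Concretely, it suffices to show that
\begin{equation*}
\Prob{F(N,p)\in A}\le \Prob{G(N,p)\in A}
\end{equation*}
for every event $A$ which is increasing (upward-closed) in the edge set.

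For this domination, I would invoke the Harris/FKG inequality. The law of $G(N,p)$ is a product measure on $\{0,1\}^{\binom{[N]}{2}}$, since the edge indicators are independent Bernoulli$(p)$ variables, so Harris applies. The event $\{G(N,p)\text{ is acyclic}\}$ is decreasing in the edge set, because removing an edge from a forest leaves a forest. Hence, for any increasing event $A$,
\begin{equation*}
\Prob{G(N,p)\in A,\ G(N,p)\text{ is acyclic}}\le \Prob{G(N,p)\in A}\cdot\Prob{G(N,p)\text{ is acyclic}}.
\end{equation*}
Since $F(N,p)$ is by definition the law of $G(N,p)$ conditioned on being acyclic, dividing through by $\Prob{G(N,p)\text{ is acyclic}}$ yields exactly the desired inequality.

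The only technicality is that the conditioning be well-defined, which is immediate since $p<1$ gives $\Prob{G(N,p)\text{ is acyclic}}\ge(1-p)^{\binom{N}{2}}>0$. I do not expect any substantive obstacle, as the argument is a routine combination of two classical results. Strassen's theorem then upgrades the stochastic domination to an almost sure edge-monotone coupling, yielding $E(F(N,p))\subseteq E(G(N,p))$ as claimed.
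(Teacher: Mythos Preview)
Your proposal is correct and is precisely the argument the paper has in mind: the paper states that the lemma ``follows from Strassen's theorem and the Harris inequality, since acyclicity is a decreasing event,'' which is exactly the route you take. Your additional remark that $p<1$ ensures $\Prob{G(N,p)\text{ acyclic}}>0$ so the conditioning is well-defined is a sensible clarification.
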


The following result, adapted from Janson and Spencer \cite{JansonSpencer}, controls the expected size of the component of a uniformly-chosen vertex from $G(N,p)$ in the critical window.

\begin{lemma}\cite[Corollary 5.2]{JansonSpencer}
\label{JansonSpencerprop} Fix $\lambda\in\R$, and let $(p(N))$ satisfy $N^{1/3}(Np(N)-1)\rightarrow\lambda$. Let $|C_{G(N,p)}(v)|$ be the size of the component containing a uniformly-chosen vertex $v$ in $G\left(N,p\right)$. Then there exists $\Theta^\lambda\in(0,\infty)$ such that
\begin{equation}\label{eq:ECG}
N^{-1/3}\E{|C_{G(N,p)}(v)|} \rightarrow \Theta^\lambda
\end{equation}
as $N\rightarrow\infty$. Thus by Lemma \ref{barGstochdom}, if we now let $|C_{F(N,p)}(v)|$ be the size of the component containing a uniformly-chosen vertex in $F(N,p)$, we have
\begin{equation}\label{eq:barECvlimit} \limsup_{N\rightarrow\infty}N^{-1/3}\E{|C_{F(N,p)}(v)|}\le \Theta^{\lambda}.\end{equation}
\end{lemma}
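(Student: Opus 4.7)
The plan is to treat the two statements separately, since the first is a direct citation and the second is a short coupling consequence.

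For the first claim \eqref{eq:ECG}, the content is exactly Corollary~5.2 of Janson and Spencer, which we would invoke as a black box. To sketch the intuition in case one wished to reproduce it: one writes
\begin{equation*}
\E{|C_{G(N,p)}(v)|}=\frac{1}{N}\sum_{i}\bigl(C_i^{G(N,p)}\bigr)^2,
\end{equation*}
since choosing a vertex uniformly and returning the size of its component is size-biased selection on components. Aldous's Proposition \ref{prop:AldousGNp} yields $N^{-2/3}(C_1^{G(N,p)},C_2^{G(N,p)},\ldots)\tod \cCBlambda$ in $\ell^2$, so $N^{-4/3}\sum_i(C_i^{G(N,p)})^2\tod \sum_i (C_i^\lambda)^2$. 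Janson and Spencer's moment estimates on component sizes in the critical window (controlling, e.g., the contribution from components of size greater than $A N^{2/3}$ uniformly in $N$) supply the uniform integrability needed to upgrade this to convergence in mean. The limit constant is then $\Theta^\lambda = \E{\sum_i (C_i^\lambda)^2}\in(0,\infty)$.

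For the second claim \eqref{eq:barECvlimit}, the argument is essentially immediate from Lemma \ref{barGstochdom}. Put $G(N,p)$ and $F(N,p)$ on a common probability space so that $E(F(N,p))\subseteq E(G(N,p))$ almost surely, and take the uniformly chosen vertex $v$ to be the same in both models. Since every edge of $F(N,p)$ is an edge of $G(N,p)$, every path in $F(N,p)$ is a path in $G(N,p)$, so the component of $v$ in $F(N,p)$ is contained in the component of $v$ in $G(N,p)$. Hence $|C_{F(N,p)}(v)|\le |C_{G(N,p)}(v)|$ pointwise, and taking expectations and then $\limsup_{N\to\infty}$ after multiplying by $N^{-1/3}$ yields \eqref{eq:barECvlimit}.

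There is essentially no obstacle here — all the nontrivial work is hidden in the Janson–Spencer moment estimates that establish \eqref{eq:ECG}. If one wanted to avoid citing that result, the main obstacle would be the uniform integrability of $N^{-4/3}\sum_i (C_i^{G(N,p)})^2$, which requires controlled tail bounds on the size of the largest component at criticality (of the form $\Prob{C_1^{G(N,p)}>AN^{2/3}}$ decaying fast enough in $A$ uniformly in $N$); this is exactly the estimate supplied by the Janson–Spencer machinery.
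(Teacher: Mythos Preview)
Your proposal is correct and matches the paper's treatment: the paper does not give its own proof of this lemma but simply cites Janson--Spencer for \eqref{eq:ECG} and notes within the lemma statement itself that \eqref{eq:barECvlimit} follows from the stochastic domination of Lemma~\ref{barGstochdom}. Your added sketch of how one might recover \eqref{eq:ECG} from Aldous's $\ell^2$ convergence plus uniform integrability is extra commentary not present in the paper, but it is accurate and does not diverge from the intended argument.
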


\begin{lemma}\label{Thetalimit}
$\Theta^\lambda$ is increasing as a function of $\lambda$, and $\Theta^\lambda\rightarrow 0$ as $\lambda\rightarrow-\infty$.
\begin{proof}
The increasing property follows from Lemma \ref{GNpcoupling}. Then, take $\lambda<0$ and $p=\frac{1+\lambda N^{-1/3}}{N}$. It is well-known (see \cite{vdHRGCN} for details) that the exploration process of $G(N,p)$ 
is stochastically dominated by the exploration process of $\mathcal{T}^{Np}$, the Galton--Watson tree with Poisson$(Np)$ offspring distribution. From this, we obtain 
%$|C_{G(N,p)}(v)|\le |\mathcal{T}^{Np}|$. Then
\[
\E{|C_{G(N,p)}(v)|}\le \E{|\mathcal{T}^{Np}|} 
= \frac{1}{1-Np} = \frac{N^{1/3}}{|\lambda|},
\]
and the result follows on taking $\lambda\rightarrow-\infty$.
\end{proof}
\end{lemma}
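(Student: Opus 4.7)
The plan is to establish the two claims separately; both reduce to comparisons with the ordinary Erd\H{o}s--R\'enyi graph $G(N,p)$, for which \eqref{eq:ECG} already supplies the expected-component-size asymptotics.

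First I would handle monotonicity. Fix $\lambda<\lambda'$ and let $p(N)=N^{-1}+\lambda N^{-4/3}$ and $p'(N)=N^{-1}+\lambda' N^{-4/3}$, so $p(N)\le p'(N)$ for all $N$ sufficiently large. By Lemma \ref{GNpcoupling} we may couple $G(N,p)$ and $G(N,p')$ with $E(G(N,p))\subseteq E(G(N,p'))$ almost surely. Since the component of a fixed vertex $v$ is a monotone functional of the edge set, $|C_{G(N,p)}(v)|\le |C_{G(N,p')}(v)|$ almost surely; taking expectations, dividing by $N^{1/3}$, and passing to the limit via \eqref{eq:ECG} gives $\Theta^\lambda\le \Theta^{\lambda'}$.

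For $\Theta^\lambda\to 0$ as $\lambda\to-\infty$, the plan is to bound $\E{|C_{G(N,p)}(v)|}$ from above using a subcritical Galton--Watson tree. Take $\lambda<0$ and $p=(1+\lambda N^{-1/3})/N$, so that $Np<1$. The standard coupling of the breadth-first exploration of $G(N,p)$ with the genealogy of a branching process $\mathcal{T}^{Np}$ having $\Po(Np)$ offspring yields
\[
\E{|C_{G(N,p)}(v)|}\le \E{|\mathcal{T}^{Np}|}=\frac{1}{1-Np}=\frac{N^{1/3}}{|\lambda|}.
\]
Dividing by $N^{1/3}$ and letting $N\to\infty$ gives $\Theta^\lambda\le 1/|\lambda|$, which vanishes as $\lambda\to-\infty$.

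There is no serious obstacle here. The only mildly subtle point is the Galton--Watson domination itself: at each exploration step one reveals a $\mathrm{Bin}(n,p)$ number of new neighbours with $n\le N-1$, dominated by $\mathrm{Bin}(N,p)$ and hence by a $\Po(Np)$ variable, and one must verify that these couplings can be carried out jointly across all steps so that the whole exploration tree is pathwise dominated by $\mathcal{T}^{Np}$. This is entirely classical and can simply be quoted from \cite{vdHRGCN} rather than reproduced.
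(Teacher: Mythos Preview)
Your proof is correct and follows essentially the same route as the paper: monotonicity via the edge-coupling of Lemma \ref{GNpcoupling}, and the limit $\Theta^\lambda\to 0$ via domination of the exploration by a subcritical Galton--Watson tree with mean offspring $Np$, giving $\Theta^\lambda\le 1/|\lambda|$. One small caveat in your aside: $\mathrm{Bin}(N,p)$ is \emph{not} stochastically dominated by $\Po(Np)$ in general (already $\Prob{\mathrm{Bin}(N,p)\ge 1}=1-(1-p)^N\ge 1-e^{-Np}$), so the clean domination is by the Galton--Watson tree with $\mathrm{Bin}(N,p)$ offspring; since that tree also has mean offspring $Np$, the bound $\E{|\mathcal{T}|}=1/(1-Np)$ and hence the conclusion are unaffected.
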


Let $S^2(G)$ be the sum of the squares of the 
component sizes in a graph $G$. 
Then the expectation in (\ref{eq:ECG}) is simply
$\E{S^2(G(N,p))}/N$ (since $C(v)$ is a size-biased
choice from the components of $G$), and the 
following corollary is an immediate 
consequence of Lemma
\ref{JansonSpencerprop}.
\begin{corollary}\label{S2corollary}
%Let $S^2(G)$ be the sum of the squares of the component sizes in a graph $G$. 
Suppose that $p(N)=1/N+O(N^{-4/3})$ as $N\to\infty$. Then
$N^{-4/3}S^2(F(N,p))$ and $N^{-4/3}S^2(G(N,p))$ are bounded in expectation.
\end{corollary}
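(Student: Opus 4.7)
The plan is to combine the identity $\mathbb{E}[|C_{G}(v)|]=\mathbb{E}[S^2(G)]/N$ (which holds for any random graph $G$ on $[N]$, since choosing $v$ uniformly is a size-biased choice of component) with the monotonicity of $S^2$ under edge addition to reduce to the hypothesis of Lemma \ref{JansonSpencerprop}. The main subtlety is that $p(N)=1/N+O(N^{-4/3})$ need not satisfy $N^{1/3}(Np-1)\to\lambda$ for a fixed $\lambda$; this is exactly where monotonicity lets us sandwich.

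First, I would fix a constant $C<\infty$ large enough that $p(N)\le p'(N):=1/N+CN^{-4/3}$ for all sufficiently large $N$. The sequence $p'(N)$ does satisfy $N^{1/3}(Np'(N)-1)\to C$, so Lemma \ref{JansonSpencerprop} gives $N^{-1/3}\mathbb{E}[|C_{G(N,p')}(v)|]\to\Theta^{C}$ as $N\to\infty$. Using the identity above, this is equivalent to
\[
N^{-4/3}\mathbb{E}[S^2(G(N,p'))]\to \Theta^{C},
\]
so in particular the left-hand side is bounded in $N$.

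Next I would invoke Lemma \ref{GNpcoupling} to couple $G(N,p)$ and $G(N,p')$ with $E(G(N,p))\subseteq E(G(N,p'))$ almost surely. Adding edges can only merge components, and if components of sizes $a,b$ merge then $(a+b)^2\ge a^2+b^2$, so $S^2$ is monotone non-decreasing under edge addition; hence $S^2(G(N,p))\le S^2(G(N,p'))$ pointwise under this coupling, giving the desired boundedness of $N^{-4/3}\mathbb{E}[S^2(G(N,p))]$. Then the corresponding bound for $F(N,p)$ follows immediately from Lemma \ref{barGstochdom}: there is a coupling with $E(F(N,p))\subseteq E(G(N,p))$, and by the same monotonicity $S^2(F(N,p))\le S^2(G(N,p))$ almost surely.

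There is no real obstacle here; the only point that deserves a one-line verification is the elementary monotonicity of $S^2$ under edge addition, and the elementary reinterpretation of $\mathbb{E}[|C(v)|]$ as $\mathbb{E}[S^2]/N$. Everything else is a direct appeal to Lemmas \ref{GNpcoupling}, \ref{barGstochdom}, and \ref{JansonSpencerprop}.
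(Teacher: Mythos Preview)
Your argument is correct and follows the same route as the paper: the identity $\mathbb{E}[|C(v)|]=\mathbb{E}[S^2]/N$ combined with Lemma~\ref{JansonSpencerprop} for $G(N,p)$, and then the stochastic domination of Lemma~\ref{barGstochdom} for $F(N,p)$. You are in fact slightly more careful than the paper, which simply declares the corollary ``an immediate consequence of Lemma~\ref{JansonSpencerprop}'' without spelling out the monotonicity step (via Lemma~\ref{GNpcoupling}) needed to pass from the hypothesis $p=1/N+O(N^{-4/3})$ to a sequence with $N^{1/3}(Np-1)\to\lambda$.
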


\subsection{Stack forests}
\label{exploration-transitions}
\begin{definition}
For a graph $G$, we say a set $A\subseteq V(G)$ is \emph{separated} in $G$ if no pair of vertices in $A$ lie in the same component of $G$.
\end{definition}

Recall from Section \ref{subsec:intro-exploration} that we are considering a breadth-first exploration process of $F(N,p)$. For the remainder of this short section, we suppress notation on $N$ and $p$ in the exploration process, since the result to follow holds for all $p\in(0,1)$. Then $\mathcal{Z}_n$ is the \emph{stack} of vertices which have been seen but not explored yet. Note that all the vertices in $\mathcal{Z}_n$ are in the same component of $F(N,p)$, since components are explored one-by-one. In particular, in the graph restricted to $[N]\backslash \{v_1,\ldots,v_n\}$, no pair of vertices in $\mathcal{Z}_n$ lie in the same component, as otherwise there would be a cycle in $F(N,p)$. We refer to the $Z_n$ trees on $[N]\backslash\{v_1,\ldots,v_n\}$ containing each $v\in\mathcal{Z}_n$ as the \emph{stack forest}, as in Figure \ref{fig:stackforestdiagram}. We can see that the vertices in $\mathcal{Z}_n$ are \emph{separated} in the restricted graph on $[N]\backslash\{v_1,\ldots,v_n\}$.

\begin{figure}[h]
   \centering
\includegraphics[width=0.7\textwidth]{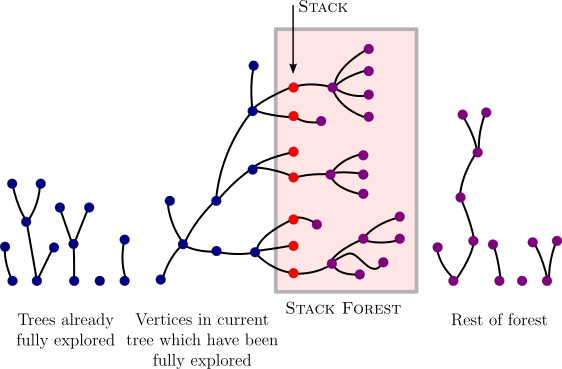}
    \caption{Illustration of the definition of \emph{stack forest}}
    \label{fig:stackforestdiagram}
\end{figure}

\par
Now, suppose we condition on $\{v_1,\ldots,v_n\}\cup \mathcal{Z}_n$, and the structure of $F(N,p)$ on these $n+Z_n$ vertices. Then, the graph restricted to $[N]\backslash\{v_1,\ldots,v_n\}$ has the same distribution as
$$F([N]\backslash\{v_1,\ldots,v_n\},p),$$
with the extra condition that no pair of vertices from $\mathcal{Z}_n$ lie in the same component.

\par
We expand this explanation considerably in the proof of the following lemma, which formalises the claim that $(Z_n)_{n\ge 0}$ is Markov, and characterises its transition probabilities via separation of the current stack in the remainder of the graph.

\begin{lemma}\label{incrdescription}
Let $(Z_n)_{n\ge 0}$ be the exploration process of $F(N,p)$. Then $Z$ is a Markov chain, and for $n\geq 0$ and $r\geq 1$, 
%\begin{align}
%&\Prob{Z_{m+1} - Z_m = \ell-1 \,\big|\, Z_m=r, Z_{m-1}=r_{m-1},\ldots,Z_1=r_1}\label{eq:ZmMarkov}\\
%&\qquad \propto \binom{N-m-r}{\ell}p^\ell (1-p)^{N-m-r-\ell}\Prob{[r+\ell-1]\text{ separated in }F(N-m-1,p)},\nonumber
%\end{align}
%for $\ell=0,1,\ldots,N-m-r$.
\begin{multline}
\Prob{Z_{n+1} = r+\ell-1 \,\big|\, Z_n=r}
\label{eq:ZmMarkov}
\\
\propto
\binom{N-n-r}{\ell}p^\ell (1-p)^{N-n-r-\ell}
\Prob{[r+\ell-1]\text{ separated in }F(N-n-1,p)}
\end{multline}
as $\ell$ varies over $\{0, 1,\dots, N-n-r\}$.

The distribution of $Z_{n+1}$ given $Z_n=0$ is 
the same as the distribution of $Z_{n+1}$ given $Z_n=1$.
\end{lemma}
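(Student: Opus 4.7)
The strategy is to condition on the full exploration history $\mathcal{H}_n$ --- meaning $\{v_1,\ldots,v_n\}$, $\cZ_n$, and the induced forest structure on these $n+r$ vertices --- and to show that the conditional law of $Z_{n+1}$ depends on $\mathcal{H}_n$ only through $r = Z_n$, with the claimed explicit form. The key structural observation is that, after step $n$, every edge with at least one endpoint in $\{v_1,\ldots,v_n\}$ has its status determined; in particular an edge from a processed vertex to $R := [N]\setminus(\{v_1,\ldots,v_n\}\cup\cZ_n)$ cannot exist, since its $R$-endpoint would then already belong to $\cZ_n$. The only edges whose status is still undetermined are those inside $[N]\setminus\{v_1,\ldots,v_n\}$. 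Because all $r$ vertices of $\cZ_n$ already lie in a common tree of the partially revealed forest, the acyclicity of $F(N,p)$ is equivalent, given $\mathcal{H}_n$, to the subgraph on $[N]\setminus\{v_1,\ldots,v_n\}$ being a forest with $\cZ_n$ separated. Conditional on $\mathcal{H}_n$, this remaining subgraph is therefore distributed as $F(N-n,p)$ conditioned on $\cZ_n$ being separated.

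For $r \ge 1$, I will use vertex-exchangeability of $F(N-n,p)$ to relabel so that $\cZ_n = [r]$, $v_{n+1} = 1$, and the unseen remainder is $\{r+1,\ldots,N-n\}$. The event $\{Z_{n+1}=r+\ell-1\}$ is precisely that vertex $1$ has $\ell$ neighbours in $\{r+1,\ldots,N-n\}$ (by separation it can have no neighbours in $\{2,\ldots,r\}$). Writing $F(N-n,p) = G(N-n,p) \mid \mathrm{acyclic}$ and applying Bayes, for each fixed $\ell$-subset $S\subseteq\{r+1,\ldots,N-n\}$ I compute the $G(N-n,p)$-probability of the joint event ``vertex $1$'s neighbour set equals $S$, the graph is acyclic, and $[r]$ is separated''. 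The edges incident to vertex $1$ are independent of the rest of $G(N-n,p)$ and contribute a factor $p^\ell(1-p)^{N-n-1-\ell}$; deleting vertex $1$, the residual graph is $G(N-n-1,p)$ and, because vertex $1$'s only neighbours lie in $S$, the residual requirement simplifies to ``$G(N-n-1,p)$ acyclic with $\{2,\ldots,r\}\cup S$ separated'', which factorises as $\Prob{G(N-n-1,p)\text{ acyclic}}\cdot\Prob{[r+\ell-1]\text{ separated in }F(N-n-1,p)}$ by a further appeal to exchangeability.

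Summing over the $\binom{N-n-r}{\ell}$ admissible choices of $S$ and dividing out the factors that are independent of $\ell$ --- namely $\Prob{G(N-n,p)\text{ acyclic}}$, $\Prob{\cZ_n\text{ separated in }F(N-n,p)}$, and $(1-p)^{r-1}$ --- yields exactly the proportionality in \eqref{eq:ZmMarkov}. Since the right-hand side depends on $\mathcal{H}_n$ only via $r$ and $n$, the Markov property drops out of the same calculation. For $r = 0$ there is no nontrivial separation constraint and $v_{n+1}$ is simply a freshly selected vertex from $R$; the analogous Bayes computation (with no ``forbidden'' edges from $v_{n+1}$ to the stack to account for) gives $\Prob{Z_{n+1}=\ell\mid Z_n=0}\propto\binom{N-n-1}{\ell}p^\ell(1-p)^{N-n-1-\ell}\Prob{[\ell]\text{ separated in }F(N-n-1,p)}$, which coincides with the $r=1$ formula under the substitutions $r+\ell-1=\ell$ and $N-n-r=N-n-1$.

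The most delicate step is the acyclicity bookkeeping --- disentangling the cycle constraints that are automatic consequences of the existing tree structure on $\{v_1,\ldots,v_n\}\cup\cZ_n$ from those that survive as the explicit separation event in the smaller residual model, and invoking exchangeability at each stage to reduce separation of an arbitrary $k$-subset of $F(M,p)$ to separation of the canonical set $[k]$. After these reductions everything is a routine Bayes-and-symmetry calculation, and the $\ell$-independent factors cancel cleanly.
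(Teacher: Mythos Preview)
Your proof is correct and follows essentially the same approach as the paper's: both condition on the full exploration history, identify the remaining graph on $[N]\setminus\{v_1,\ldots,v_n\}$ as $F(N-n,p)$ conditioned on the stack being separated, then remove $v_{n+1}$ (relabelled as vertex $1$) to reduce the joint event ``degree $\ell$, acyclic, $[r]$ separated'' to a separation condition on $r+\ell-1$ vertices in $F(N-n-1,p)$, with the Markov property and the $r=0$ case handled exactly as you do. Your list of $\ell$-independent factors omits $\Prob{G(N-n-1,p)\text{ acyclic}}$ from the numerator, but this is harmless since the proportionality claim is unaffected.
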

\begin{proof}

%Note: this version of the proof refers 
%to a set-up where the exploration process 
%is deterministic. 

In the $F(N,p)$ model, 
each forest $H$ appears with probability proportional to 
$\left(\frac{p}{1-p}\right)^{|E(H)|}$,
where $|E(H)|$ is the number of edges of $H$.

Consider the first $n$ steps of the exploration process.
As well as conditioning on $Z_n=r$, consider conditioning
further on the history $(Z_1, \dots, Z_{n-1})$, 
on the identity of the processed vertices 
$v_1, \dots, v_n$ and on the vertices 
$v_{n+1}, \dots, v_{n+r}$ currently on the stack. 
Let us write $\cV_n=\{v_1, \dots, v_n\}$ 
for the processed vertices, 
$\cZ_n=\{v_{n+1}, \dots, v_{n+r}\}$ for the stack,
and $\cU_n=[N]\setminus \left(\cV_n \cup \cZ_n\right)$
for the remaining vertices. Assume for
the moment that $r\geq1$, i.e.\ that $\cZ_n$ 
is non-empty. 

The conditioning determines the edges of the 
graph $H$ restricted to the vertex set 
$\cV_n\cup \cZ_n$. Furthermore, under this condition
there are no edges between 
$\cV_n$ and $\cU_n$, with probability 1. 
So to specify $H$ fully it is now 
enough to give the restriction 
$\tH=H|_{\cZ_n\cup \cU_n}$ 
of $H$ to the vertex set $\cZ_n\cup \cU_n$.

The set of $\tH$ which are consistent with
the conditioning is the set of $\tH$
for which $H$ is a forest; for this, we require
precisely that $\tH$ is a forest in which  
the vertices of the stack $\cZ_n$
are separated. Subject to this constraint,
each $\tH$ appears with probability 
proportional to 
$\left(\frac{p}{1-p}\right)^{|E(\tH)|}$
where $|E(\tH)|$ is the number of edges of $\tH$. 
After a suitable relabelling of the vertices, 
this gives the model 
$F(N-n,p)$ subject to the condition
that the vertices of $[r]$ are separated.

The event in (\ref{eq:ZmMarkov}) occurs if the next increment $Z_{n+1}-Z_n$ of the exploration process has size $\ell-1$.
This occurs if vertex $v_{n+1}$ 
(the next vertex to be processed, which is currently on the stack) has $\ell$ neighbours in $\cU_n$. 
The conditional probability of an increment of
size $\ell-1$ is then equal to 
%\[
%\alpha_{N,m,r,\ell}:=
%\Prob{1 \text{ has degree }\ell \text{ in }
%F(N-m-r, p) \,\big| \, 
%[r] \text{ separated in } F(N-m-r, p)}.
%\]
\[
\alpha_{N,n,r,\ell}:=
\Prob{\deg_{\tH}(1)=\ell
\,\big|\,
[r] \text{ separated in }\tH}
\text{ where }\tH\sim F(N-n-r,p).
\]
Note that this $\alpha_{N,n,r,\ell}$ depends on the 
history we conditioned on only through the
value of $r$; hence in particular, given $Z_n=r$,
the next increment
is independent of the history $(Z_1,\dots, Z_{n-1})=
(r_1,\dots, r_{n-1})$ of the exploration process,
as required for the Markov property to hold, 
and the conditional probability on the left of
(\ref{eq:ZmMarkov}) is also equal to $\alpha_{N,n,r,\ell}$.  

There are $\ch{N-n-r}{\ell}$ ways to choose 
$\ell$ neighbours outside $[r]$ for vertex $1$.
Without loss of generality, consider the
case where these $\ell$ neighbours are
$r+1, r+2, \dots, r+\ell$. Then the property that 
$[r]$ is separated
is equivalent to the property that the set 
${2,3,\dots,r+\ell}$
is separated in the graph with vertex $1$ removed. 

Reasoning in this way, and omitting factors which are constant in $\ell$, we obtain 
%that the conditional 
%probability that the next increment is $\ell-1$ 
%as desired for (\ref{eq:ZmMarkov}) is given by
\begin{align*}
\alpha_{N,n,r,\ell}
&\propto 
\Prob{F(N-n-r,p) \text{ has }
[r] \text{ separated and } \deg(1)=\ell}\\
&\propto 
\Prob{G(N-n-r,p) \text{ is a forest with }
[r] \text{ separated and } \deg(1)=\ell}\\
&=
\Ch{N-n-r}{\ell}p^{\ell}(1-p)^{N-n-r-\ell}
\\
&\,\,\,\,\,\,\,\,\,\,\,\,\,\,\,\,\,\,
\times
\Prob{G(N-n-r-1, p) \text{ is a forest with }
[r+\ell-1] \text{ separated}}\\
&\propto
\Ch{N-n-r}{\ell}p^{\ell}(1-p)^{N-n-r-\ell}
\\
&\,\,\,\,\,\,\,\,\,\,\,\,\,\,\,\,\,\,
\times
\Prob{F(N-n-r-1, p) \text{ has } 
[r+\ell-1] \text{ separated}}.
\end{align*}
This is equal to the right-hand side of 
(\ref{eq:ZmMarkov}) as desired.

Observe that in the argument above, if $r=|\cZ_n|=1$
then
the property that the vertices of $\cZ_n$
are separated in $\tH$ becomes vacuously true;
all forests $\tH$ are consistent with the
history of the exploration process,
and the conditional distribution of $\tH$ 
above becomes simply that of $F(N-n,p)$. In
the case $r=0$, where the stack is empty,
we start exploring again from a new vertex
(specifically, the vertex in $\cU_n$ with 
smallest label). Again all forests $\tH$ are 
consistent with the history, and so in fact
the law of the rest of the process in the case 
$Z_n=0$ is the same as that in the case $Z_n=1$,
as desired. 
\end{proof}

We want to quantify exactly how large a probabilistic penalty is incurred by adding an extra vertex to the stack, and so will consider limits of the quantity
$$\frac{\Prob{[r+\ell]\text{ separated in }F(N-n-1,p)}}{\Prob{[r+\ell-1]\text{ separated in }F(N-n-1,p)}}.$$

Given a graph in which $[r+\ell-1]$ are separated, the conditional probability that $r+\ell$ is also separated depends on the size of the stack forest rooted by $[r+\ell-1]$. So we will calculate the expected size of a stack forest in Section \ref{Estack}. We need precise asymptotics for the probability that $G(N,p)$ is acyclic, which we derive in Section \ref{Pacyclic}. We then use this to calculate the probability that the stack forest has a particular size.

\subsection{Enumerating weighted stack forests}\label{Pacyclic}

In this section, we consider the probability that $G(N,p)$ is acyclic.

\begin{definition}Let $f(N,m)$ be the number of forests with vertex set $[N]$ and exactly $m$ edges. With a mild abuse of notation, we also define
\begin{equation}\label{eq:weightedFs1}f(N,p):=\Prob{G(N,p)\text{ acyclic}} = (1-p)^{\binom{N}{2}} \sum_{m=0}^{N-1} f(N,m)\left(\frac{p}{1-p}\right)^m,\quad p\in[0,1].\end{equation}
\end{definition}

\begin{lemma}\label{fNvsfN+1lemma}
For any $N\ge 0$ and any $p\in(0,1)$,
\begin{equation}\label{eq:bdconsecutivefnm}f(N,p)\ge f(N+1,p)\ge f(N,p)\left[1-\frac12 Np^2\E{|C_{G(N,p)}(v)|}\right],\end{equation}
where $C_{G(N,p)}(v)$ is the component containing a uniformly-chosen vertex $v$ in $G(N,p)$.
\begin{proof}
Graphs with zero, one or two vertices are certainly acyclic, so $f(0,p)=f(1,p)=f(2,p)=1$, the statement is true for $N=0,1$. We assume from now on that $N\ge 2$. We can define a forest on $[N+1]$ via the restriction to $[N]$ (which is clearly also a forest) and the neighbourhood of vertex $N+1$, where the latter must obey some conditions to avoid cycles. We take $\mathbb{P}$ to be a probability distribution which couples $G(N,p)$ and $G(N+1,p)$ such that $E(G(N,p))\subseteq E(G(N+1,p))$, $\mathbb{P}$-a.s. Recall that in a graph $G$, for $v\in V(G)$, $\Gamma(v)$ is the set of vertices connected to $v$ by an edge in $E(G)$. Then
$$f(N+1,p)= f(N,p) \Prob{\Gamma(N+1) \text{ separated in }G(N,p) \,\big|\, G(N,p)\text{ acyclic}},$$
and so the first inequality in \eqref{eq:bdconsecutivefnm} certainly holds. Now, for any set $A\subset [N]$, the event that $A$ is separated in $G$ is decreasing, while the event that $G$ is acyclic is also decreasing. So, again by the Harris inequality,
$$f(N+1,p)\ge f(N,p) \Prob{\Gamma(N+1)\text{ separated in }G(N,p) },$$
and so
\begin{equation}\label{eq:Fratioviaseparation}1-\frac{f(N+1,p)}{f(N,p)} \le \Prob{\Gamma(N+1)\text{ not separated in }G(N,p)}.\end{equation}

Observe that the event that $\Gamma(N+1)$ is not separated in $G(N,p)$ is the union over $i,j\in[N]$ of the events
$$\{i,j\text{ both in }\Gamma(N+1)\text{ and both in the same component of }G(N,p)\}.$$
Thus, by exchangeability of the vertices in $[N]$,
$$\Prob{\Gamma(N+1)\text{ not separated in }G(N,p) } \le \binom{N}{2}p^2 \,\Prob{\text{1 and 2 in same component of }G(N,p)}.$$

Then, if $|C_{G(N,p)}(1)|$ is the size of the component of $G(N,p)$ containing vertex 1,
$$\Prob{\text{1 and 2 in same component of }G(N,p)} = \frac{\E{|C_{G(N,p)}(1)|}-1}{N-1}.$$

We conclude that
\begin{align*}
\Prob{\Gamma(N+1)\text{ not separated in }G(N,p) } & \le \binom{N}{2}p^2\cdot  \frac{\E{|C_{G(N,p)}(1)|}-1}{N-1}
\\
&\le \frac12 Np^2\E{|C_{G(N,p)}(1)|},
\end{align*}
from which the result follows, using \eqref{eq:Fratioviaseparation} and the fact that the vertices in $G(N,p)$ are exchangeable.
\end{proof}
\end{lemma}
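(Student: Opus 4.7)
The plan is to exploit the natural monotone coupling of $G(N,p)$ and $G(N+1,p)$, in which the restriction of $G(N+1,p)$ to $[N]$ is exactly $G(N,p)$ and the edges incident to the new vertex $N+1$ are independent Bernoulli($p$) variables. Under this coupling, $G(N+1,p)$ is acyclic if and only if $G(N,p)$ is acyclic \emph{and} $\Gamma(N+1)$ is separated in $G(N,p)$ (otherwise two neighbours of $N+1$ lie in a common component, producing a cycle through $N+1$). This gives the exact identity
\[
\frac{f(N+1,p)}{f(N,p)} = \Prob{\Gamma(N+1)\text{ separated in }G(N,p) \,\big|\, G(N,p)\text{ acyclic}},
\]
from which the upper inequality $f(N+1,p)\le f(N,p)$ is immediate. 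The boundary cases $N=0,1$ are trivial since $f(0,p)=f(1,p)=f(2,p)=1$ and the claimed lower bound is at most $1$.

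For the lower bound, I would invoke the Harris inequality. The key observation is that $\Gamma(N+1)$ is independent of $G(N,p)$: conditioning on $\Gamma(N+1)=A$, both events "$A$ is separated in $G(N,p)$" and "$G(N,p)$ is acyclic" are decreasing functions of the edge set of $G(N,p)$, so Harris yields $\Prob{A\text{ separated} \mid\text{acyclic}} \ge \Prob{A\text{ separated}}$. Averaging over the random set $A=\Gamma(N+1)$ gives
\[
\frac{f(N+1,p)}{f(N,p)} \ge \Prob{\Gamma(N+1)\text{ separated in }G(N,p)}.
\]

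It remains to show that the complementary probability is at most $\tfrac12 Np^2 \E{|C_{G(N,p)}(v)|}$. The event that $\Gamma(N+1)$ is not separated is contained in the union over pairs $\{i,j\}\subset[N]$ of the events that $i,j\in\Gamma(N+1)$ and $i,j$ lie in the same component of $G(N,p)$. A union bound combined with the independence of the edges from $N+1$ contributes a factor $p^2$ per pair, and by exchangeability the probability that two fixed vertices share a component equals the size-biasing identity $(\E{|C_{G(N,p)}(1)|}-1)/(N-1)$. Multiplying by $\binom{N}{2}=N(N-1)/2$ and discarding the $-1$ produces the desired bound. The only genuinely substantive step is the Harris application, and the main point to notice is that the independence of the edges incident to $N+1$ from the edges within $[N]$ lets us reduce the whole argument to a comparison of two decreasing events on a common independent edge set; everything else is a routine first-moment computation.
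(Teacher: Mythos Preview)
Your proof is correct and follows essentially the same approach as the paper: the same monotone coupling, the same exact identity for $f(N+1,p)/f(N,p)$, the same Harris application (you spell out the conditioning on $\Gamma(N+1)=A$ and averaging a bit more explicitly, but it is the same argument), and the same union bound with the size-biasing identity for $\Prob{1,2\text{ in same component}}$.
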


Now, using the asymptotics for $f(N,m)$ in \eqref{eq:Britikovresult}, we may obtain asymptotics for $f(N,p)$. Here, and in subsequent sections, some straightforward but lengthy calculations are required, and in some places, various expansions have to be taken to fifth order. To avoid breaking the flow of the main argument, we postpone this proof until Section \ref{acyclicboundproof}.

\begin{lemma}\label{acyclicboundlemma}Fix $\lambda^-<\lambda^+\in\R$. Given $p\in(0,1)$, let $\Lambda=\Lambda(N,p)=N^{1/3}(Np-1)$. Then
\begin{equation}\label{eq:weightedFs2}f(N,p)=\Prob{G(N,p)\text{ acyclic}}=(1+o(1))g(\Lambda)e^{3/4}\sqrt{2\pi} N^{-1/6} ,\end{equation}
uniformly for $\Lambda\in[\lambda^-,\lambda^+]$ as $N\rightarrow\infty$.
\end{lemma}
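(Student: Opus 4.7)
The plan is to combine the representation \eqref{eq:weightedFs1} with Britikov's asymptotic \eqref{eq:Britikovresult} and evaluate the resulting sum over $m$ by Laplace's method, centred at $m\approx N/2+\tfrac12\Lambda N^{2/3}$. Writing $q:=p/(1-p)$, we have $f(N,p)=(1-p)^{\binom{N}{2}}\sum_m f(N,m)\,q^m$.

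First, I would restrict the sum to the critical window $|m-N/2|\le cN^{2/3}$ for a large fixed $c$, where Britikov's asymptotic applies uniformly. For the tail, use the crude bound $f(N,m)\le \binom{\binom{N}{2}}{m}$, which implies $(1-p)^{\binom{N}{2}}f(N,m)q^m\le \Prob{\mathrm{Bin}(\binom{N}{2},p)=m}$. Since $\binom{N}{2}p=\tfrac{N}{2}+O(N^{2/3})$ with variance $\Theta(N)$, a Chernoff bound shows the tail contributes $\exp(-\Omega(c^2N^{1/3}))$, which is super-polynomially small and so negligible compared to the leading term of order $N^{-1/6}$.

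Inside the window, parametrize $m=\tfrac{N}{2}+\tfrac12 yN^{2/3}$, so the argument of $g$ is exactly $y$ and $y$ varies on a lattice with spacing $2N^{-2/3}$. The heart of the proof is a careful expansion of $\log\!\left[(1-p)^{\binom{N}{2}}f(N,m)q^m\right]$, requiring (i) Taylor expansion of $\log(1-p)=-\sum_{k\ge 1}p^k/k$ and multiplication by $\binom{N}{2}$; (ii) expansion of $\log p=-\log N + \log(1+\Lambda N^{-1/3})$ and of $-\log(1-p)$, then multiplication by $m$; and (iii) Stirling applied to $(N-m)!$, with the key expansion $\tfrac{N}{2}(1-a)\log(1-a)=-\tfrac{Na}{2}+\tfrac{Na^2}{4}+\tfrac{Na^3}{12}+\cdots$ in $a=yN^{-1/3}$. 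Collecting terms across the three pieces, the contributions of order $N$, $N^{2/3}$ and $\log N$ cancel or combine into a prefactor $N^{-2/3}$; the $N^{1/3}$-coefficient assembles into the Gaussian $-\tfrac14(y-\Lambda)^2$ centred at $y=\Lambda$; and the remaining $O(1)$ terms should combine with $\log g(y)$ into the constant $\tfrac{3}{4}+\tfrac12\log 2+\log g(y)$ (together with any residual $y$-- and $\Lambda$-cubic corrections that vanish at the saddle $y=\Lambda$ or are absorbed into $o(1)$ over the Gaussian width).

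Finally, convert the lattice sum to a Riemann integral ($\sum_y\approx\tfrac{N^{2/3}}{2}\int dy$, justified by uniform continuity of $g$ from Lemma~\ref{gregularitylemma}), substitute $y=\Lambda+wN^{-1/6}$ to rescale the Gaussian to $e^{-w^2/4}$, and use $g(\Lambda+wN^{-1/6})\to g(\Lambda)$ uniformly on compact $w$-sets. The Gaussian integral contributes $\int e^{-w^2/4}dw=2\sqrt\pi$, and collecting all prefactors yields $(1+o(1))\,g(\Lambda)\,e^{3/4}\sqrt{2\pi}\,N^{-1/6}$, uniformly for $\Lambda\in[\lambda^-,\lambda^+]$. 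The main obstacle, as explicitly flagged in the paper's introductory remark that ``various expansions have to be taken to fifth order'', lies in this bookkeeping: several of the expansions in steps (i)--(iii) contribute $\Lambda$-dependent terms at orders $N^{2/3}$, $N^{1/3}$ and $O(1)$ that must be summed from multiple sources, and pushing the expansions of $\log(1-p)$, $\log(1+\Lambda N^{-1/3})$ and $(1-a)\log(1-a)$ to fifth order is essential both to verify the cancellations of the polynomially growing parts and to isolate the clean residual constant. Uniformity over $\Lambda\in[\lambda^-,\lambda^+]$ is inherited from the uniformity of Britikov's asymptotic and standard estimates in Laplace's method.
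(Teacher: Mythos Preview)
Your proposal is correct and follows essentially the same route as the paper's proof: truncate the sum in \eqref{eq:weightedFs1} using the crude bound $f(N,m)\le\binom{\binom{N}{2}}{m}$, insert Britikov's asymptotic inside the window, expand $(N-m)!$ via Stirling and the various logarithms by Taylor series, and evaluate the resulting Gaussian sum as a Riemann integral. The only cosmetic differences are that the paper truncates to the narrower window $|m-\tfrac{N}{2}-\tfrac{\Lambda}{2}N^{2/3}|\le N^{3/5}$ (using Chebyshev rather than Chernoff for the tail) and parametrises by $x=\sqrt{2}N^{-1/2}\big(m-\tfrac{N}{2}-\tfrac{\Lambda}{2}N^{2/3}\big)$ instead of your $y$; this narrower window lets the paper replace $g(y)\to g(\Lambda)$ immediately and makes the ``residual cubic corrections'' you mention genuinely $o(1)$ throughout the window, whereas in your wider window you must (as you indicate) appeal to the Gaussian concentration a second time to dispose of them.
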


Motivated by the definition of stack forests, for each $0\le r\le N$, let $\mathcal A_{N,r}\subseteq \mathcal F_N$ denote the set of forests where the vertices $1,\ldots,r$ are separated. Furthermore, given a forest $F\in\mathcal A_{N,r}$, let $k_r(F)$ be the sum of the sizes of the components containing vertices $1,\ldots,r$. We also define
\begin{equation}\label{eq:defnAnrk}\mathcal A_{N,r,k}:=\{F\in \mathcal A_{N,r},\, k_r(F)=k\},\end{equation}
the set of forests where $1,\ldots,r$ are separated, and their stack forest has size $k$.

\begin{definition}\label{defnrescalings}
Given $p\in(0,1)$ and $N,N',r,k\in\N$ satisfying $N'\le N$, and $r\le k\le N$, we will use the following rescalings:
$$\Lambda= \Lambda(N,p):= N^{1/3}(Np-1),\quad a= a(N,k):=\frac{k}{N^{2/3}},$$
\begin{equation}\label{eq:defnbetc}b=b(N,r):=\frac{r}{N^{1/3}},\quad s=s(N,N'):= \frac{N-N'}{N^{2/3}}.\end{equation}
Here, $b$ represent the rescaled size of the stack and $a$ represents the rescaled size of the stack forest. When analysing the exploration process of $G(N,p)$, we require estimates for the graph structure on the $N'\le N$ vertices which have not yet been explored. Then $s$ represents the rescaled number of vertices already explored in the exploration process.
\begin{note}Observe that for $p(N)$ satisfying the conditions of Theorem \ref{FNptheorem}, $\Lambda(N,p(N))\rightarrow \lambda$.
\end{note}
\end{definition}

\begin{definition}\label{defnPsi}
For much of this and the following sections, it will be necessary to make estimates uniformly across several variables. For constants $T<\infty$, and $\lambda^-<\lambda^+$, and $0<\epsilon<K<\infty$, we let
$$\Psi^N(\lambda^-,\lambda^+,\epsilon,K,T) := \Big\{(N',p,r,k)\in \N\times (0,1)\times \N\times \N \,:\, s(N,N')\in[0,T],$$ $$\Lambda(N,p)\in[\lambda^-,\lambda^+],\, b(N,r)\in [\epsilon,K],\, k\in[r,KN^{2/3}]\Big\}.$$
In addition, we define the projection this set onto its first three entries
$$\Psi^N_0(\lambda^-,\lambda^+,\epsilon,K,T):= \Big\{(N',p,r)\,:\, s(N,N')\in[0,T],\, \Lambda(N,p)\in[\lambda^-,\lambda^+],\, b(N,r)\in[\epsilon,K]\Big\},$$
and a variant with a broader range of $r$
$$\bar \Psi^N_0(\lambda^-,\lambda^+,K,T):= \Big\{(N',p'r)\,:\, s(N,N')\in[0,T],\, \Lambda(N,p)\in[\lambda^-,\lambda^+],\, r\in[1,KN^{1/3}]\Big\}.$$

\end{definition}

The following lemma gives uniform asymptotics for the probability that $G(N',p)$ lies in $\mathcal{A}_{N',r,k}$. The proof is postponed until Section \ref{Anrklemmaproof}.

\begin{lemma}\label{Anrklemma}Fix constants $\lambda^-,\lambda^+,\epsilon,K,T$ as in Definition \ref{defnPsi}. Then,
\begin{equation}\label{eq:asymPAn'rk}\Prob{G(N',p)\in\mathcal A_{N',r,k}} = (1+o(1))e^{3/4} g(\Lambda-s-a)N^{-5/6} ba^{-3/2}\end{equation}
$$\qquad\qquad \times \exp\left(-b(\Lambda-s)-\tfrac{b^2}{2a}-\tfrac{(\Lambda-s-a)^3-(\Lambda-s)^3}{6}\right),$$
uniformly on $(N',p,r,k)\in\Psi^N(\lambda^-,\lambda^+,\epsilon,K,T)$, as $N\rightarrow\infty$.
\end{lemma}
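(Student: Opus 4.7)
My plan is to start from an exact combinatorial decomposition of $\Prob{G(N',p)\in\mathcal{A}_{N',r,k}}$, apply Lemma \ref{acyclicboundlemma} to the ``rest'' forest and Stirling to the factorials, then Taylor-expand the remaining factors to the order needed to extract both the prefactor and the $O(1)$ exponent in \eqref{eq:asymPAn'rk}, via careful cancellation of large-order terms.

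Every graph in $\mathcal{A}_{N',r,k}$ is obtained by choosing the stack-forest vertex set $S\supseteq[r]$ with $|S|=k$, placing a rooted forest on $S$ with root set $\{1,\dots,r\}$ (there are $rk^{k-r-1}$ such, by the generalized Cayley formula, each with $k-r$ edges), placing an arbitrary forest on $[N']\setminus S$, and requiring no cross-edges between $S$ and its complement. This yields the exact identity
\[
\Prob{G(N',p)\in\mathcal{A}_{N',r,k}} = \binom{N'-r}{k-r}\, r\, k^{k-r-1}\, p^{k-r}(1-p)^{E}\, f(N'-k,p),
\]
where $E=\binom{k}{2}+k(N'-k)-(k-r)$. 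Lemma \ref{acyclicboundlemma} gives $f(N'-k,p) = (1+o(1))\,e^{3/4}g(\Lambda-s-a)\sqrt{2\pi}\,N^{-1/6}$, using that a direct expansion shows $(N'-k)^{1/3}((N'-k)p-1)\to \lambda-s-a$ uniformly on $\Psi^N$.

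For the prefactor, I rewrite $p^{k-r}k^{k-r-1}=(kp)^{k-r}/k$ and expand $\binom{N'-r}{k-r}=\frac{(N'-r)^{k-r}}{(k-r)!}\prod_{j=0}^{k-r-1}(1-j/(N'-r))$. Stirling on $(k-r)!$ together with $(k/(k-r))^{k-r}=e^{r-b^2/(2a)+o(1)}$ reduces $k^{k-r}/(k-r)!$ to $e^{k}e^{-b^2/(2a)}/\sqrt{2\pi(k-r)}$. Combining the algebraic factors $(r/k)\cdot 1/\sqrt{2\pi(k-r)}$ with $\sqrt{2\pi}\,N^{-1/6}$ from Lemma \ref{acyclicboundlemma} gives exactly $b\,a^{-3/2}e^{3/4}g(\Lambda-s-a)\,N^{-5/6}$, the prefactor in \eqref{eq:asymPAn'rk}. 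It remains to show that the residual exponentials --- namely $(k-r)\log((N'-r)p)$, $\log\prod_j(1-j/(N'-r))$, and $E\log(1-p)$, each simultaneously contributing at orders $N^{2/3}$, $N^{1/3}$ and $O(1)$ --- combine to yield the stated exponent.

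The main obstacle is the cancellation bookkeeping. Since $k-r\sim aN^{2/3}$ is large while $(N'-r)p-1=O(N^{-1/3})$, the relevant logarithms must be expanded to third or fourth order to capture all $O(1)$ contributions (this is the ``fifth-order'' computation the paper flags). The leading $N^{2/3}$ piece of $-Ep$ cancels the $e^{aN^{2/3}}$ coming from $e^k$ in the Stirling step; the $N^{1/3}$ contributions from $-Ep$, from $((N'-r)p)^{k-r}$, and from the binomial-coefficient expansion (each of the form $\text{constant}\cdot a\cdot N^{1/3}$) must cancel identically; and a careful collection of the remaining constants reproduces the claimed exponent. Uniformity over $\Psi^N(\lambda^-,\lambda^+,\epsilon,K,T)$ follows since $s,a,b,\Lambda$ vary in compact sets and all Taylor remainders admit uniform bounds; the hypothesis $b\in[\epsilon,K]$ combined with $k\in[r,KN^{2/3}]$ keeps $a$ in a bounded region where the $b^2/(2a)$-type terms are controlled and $g$ is uniformly continuous.
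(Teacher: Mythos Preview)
Your proposal is correct and follows essentially the same route as the paper: the same exact combinatorial identity (the paper's \eqref{eq:PANrk-variant}), the same invocation of Lemma~\ref{acyclicboundlemma} for the remaining forest on $N'-k$ vertices, and the same Stirling/Taylor bookkeeping to extract the $N^{2/3}$, $N^{1/3}$ and $O(1)$ cancellations. The only organizational differences are that the paper first treats the case $N'=N$ (i.e.\ $s=0$) and then transfers to general $N'$ via $\Lambda(N',p)=\Lambda-s+o(1)$, and that it expands $\binom{N'-r}{k-r}$ by applying Stirling to all three factorials rather than your falling-factorial product form; both choices lead to the same computation.
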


\subsection{Expected size of the stack forest}\label{Estack}

We now condition on $[r]$ being separated in $F(N',p)$, and obtain an estimate for the expected size of the corresponding stack forest. Recall from \eqref{eq:defnbetc} the definitions $b=b(N,r)$ and $s=s(N,N')$, the rescaled stack size, and graph vertex count deficit, respectively.

\begin{lemma}\label{Estacklemma}Fix constants $\lambda^-,\lambda^+,K,T$ as in Definition \ref{defnPsi}. Then,
\begin{equation}\label{eq:EstackF}N^{-2/3}\E{ k_r(F(N',p)) \,\big| \, F(N',p)\in\mathcal A_{N',r}} - \alpha \left(b, \Lambda-s\right) \rightarrow 0,\end{equation}
uniformly on $(N',p,r)\in\bar \Psi_0^N(\lambda^-,\lambda^+,K,T)$, as $N\rightarrow \infty$.

\begin{proof}
We can rewrite the expectation in \eqref{eq:EstackF} in terms of the unconditioned random graphs $G(N',p)$ as follows.
\begin{align}
\E{ k_r(F(N',p)) \,\big| \, F(N',p)\in\mathcal A_{N',r}} &= \frac {\sum_{k=r}^{N'} k\Prob{F(N',p)\in\mathcal A_{N',r,k}}}{\sum_{k=r}^{N'} \Prob{F(N',p)\in\mathcal A_{N',r,k}}}\nonumber\\
&= \frac {\sum_{k=r}^{N'} k\Prob{ G(N',p)\in\mathcal A_{N',r,k}}}{\sum_{k=r}^{N'} \Prob{G(N',p)\in\mathcal A_{N',r,k}}}.\label{eq:Ekrasfrac}
\end{align}
We shall see that both of the sums in \eqref{eq:Ekrasfrac} are dominated by contributions from $k=\Theta(N^{2/3})$.

\par
In order to use Lemma \ref{Anrklemma}, we assume $\epsilon\in(0,K)$ is given. We will first show that \eqref{eq:EstackF} holds uniformly on $\Psi^N(\lambda^-,\lambda^+,\epsilon,K,T)$. Then, at the end, we will take $\epsilon\rightarrow 0$. We also select $M>K$, which we will take to $\infty$ shortly.

\par
We write $h(a,b):=a^{-3/2}g(\Lambda-s-a)\exp\left(\tfrac{(\Lambda-s-a)^3-(\Lambda-s)^3}{6}\right)\exp(-b^2/2a)$. Since $g$ is bounded, $h(a,b)\rightarrow 0$ as $a\rightarrow 0$ (indeed uniformly on $b\in[\epsilon,K]$, $\Lambda\in \R$, $s\in \R_{\ge 0}$), so $\int_0^M h(a,b)\mathrm{d}a<\infty$ for all $M<\infty$. On compact intervals in $(a,b,\Lambda,s)$, $h$ is uniformly continuous and bounded away from zero. We may now use Lemma \ref{Anrklemma} to approximate every summand in \eqref{eq:Ekrasfrac}, uniformly over the required range. (Recall from \eqref{eq:defnbetc} that $a$ is a linear function of $k$.) So
$$\sum_{k=r}^{\ce{MN^{2/3}}} \Prob{ G(N',p)\in\mathcal A_{N',r,k}} =(1+o(1))bN^{-5/6}\exp\left(-b(\Lambda-s)-\tfrac{(\Lambda-s)^3}{6}+\tfrac34\right)$$
$$\qquad \times N^{2/3}\int_0 ^M a^{-3/2}g(\Lambda-s-a)\exp\left(\tfrac{(\Lambda-s-a)^3}{6}\right)\exp\left(-\tfrac{b^2}{2a}\right)\mathrm{d}a,$$
$$\sum _{k=r}^{\ce{MN^{2/3}}} k\Prob{ G(N',p)\in\mathcal A_{N',r,k}} =(1+o(1)) bN^{-5/6}\exp\left(-b(\Lambda-s)-\tfrac{(\Lambda-s)^3}{6}+\tfrac34\right)$$
\begin{equation}\label{eq:Estackfnumerator}\qquad \times N^{4/3}\int_0 ^M a^{-1/2}g(\Lambda-s-a)\exp\left(\tfrac{(\Lambda-s-a)^3}{6}\right)\exp\left(-\tfrac{b^2}{2a}\right)\mathrm{d}a,\end{equation}
uniformly on $(N',p,r)\in\Psi^N_0(\lambda^-,\lambda^+,\epsilon,K,T)$, as $N\rightarrow\infty$.

\par
Observe, by comparison with the definition of $\alpha$ in \eqref{eq:defnalpha}, that
$$\lim_{M\rightarrow\infty}N^{-2/3}\frac{\sum_{k=r}^{\ce{MN^{2/3}}} k\Prob{ G(N',p)\in\mathcal A_{N',r,k}} }{\sum _{k=r}^{\ce{MN^{2/3}}} \Prob{ G(N',p)\in\mathcal A_{N',r,k}}} = (1+o(1))\alpha(b,\Lambda-s),$$
uniformly on $(N',p,r)\in\Psi^N_0(\lambda^-,\lambda^+,\epsilon,K,T)$.

\par
Therefore, to apply \eqref{eq:Ekrasfrac} to verify \eqref{eq:EstackF}, we must check that the contribution to the expectation from the event that the size of the stack forest is larger than $M N^{2/3}$ vanishes as $M\rightarrow\infty$. From \eqref{eq:Estackfnumerator}, the contribution to the numerator of \eqref{eq:Ekrasfrac} from summands for which $k\in[r,\ce{MN^{2/3}}]$ has order $N^{-5/6}\times N^{4/3}=N^{1/2}$. So to verify \eqref{eq:EstackF} uniformly on $\Psi^N_0(\lambda^-,\lambda^+,\epsilon,K,T)$, it will suffice to check that the following statement holds:
\begin{equation}\label{eq:bdlargestackf} \lim_{M\rightarrow \infty} \limsup_{N\rightarrow \infty} \sup_{(N',p,r)\in \Psi^N_0(\lambda^-,\lambda^+,\epsilon,K,T)}N^{-1/2} \sum_{k=\fl{MN^{2/3}}}^{N'} k\Prob{ G(N',p)\in\mathcal A_{N',r,k}}=0.\end{equation}

\subsubsection*{The stack forest is not too large}
To show \eqref{eq:bdlargestackf}, we will show that the sequence $(k\Prob{ G(N',p)\in \mathcal{A}_{N',r,k}})_{k\ge r}$ is eventually bounded by a geometric series. From the definition of $F(N,p)$ in \eqref{eq:weightedFs1}, we have that
\begin{equation}\label{eq:PAnrk}\Prob{G(N,p)\in \mathcal{A}_{N,r,k}} = (1-p)^{\binom{N}{2}-\binom{N-k}{2}}\binom{N-r}{k-r}\left(\frac{p}{1-p}\right)^{k-r}rk^{k-r-1} F(N-k,p).\end{equation}

An explanation of where each term in this expression comes from is given in the proof of Lemma \ref{Anrklemma} in Section \ref{Anrklemmaproof}. We will use this to control the ratio of the probabilities $\Prob{G(N',p)\in \mathcal{A}_{N',r,k}}$ in the following lemma.
\begin{lemma}\label{geomratiolemma}
Given the same constants as in Lemma \ref{Estacklemma}, there exist constants $M<\infty$ and $\gamma>0$ such that
\begin{equation}\label{eq:geomratiobd}\frac{(k+1)\Prob{ G(N',p)\in \mathcal{A}_{N',r,k+1}}}{k\Prob{ G(N',p)\in \mathcal{A}_{N',r,k}}} \le 1- \gamma N^{-2/3},\end{equation}
for large enough $N$, whenever $(N',p,r)\in \bar \Psi_0^N(\lambda^-,\lambda^+,K,T)$ and $k\in[MN^{2/3},N'-1]$.
\end{lemma}

This lemma is proved in Section \ref{geomratioproof}. But then, we can bound \eqref{eq:bdlargestackf} via a geometric series as
$$N^{-1/2}\sum_{k=MN^{2/3}}^{N'} k\Prob{G(N',p)\in \mathcal{A}_{N',r,k} } \le N^{-1/2}\frac{\ce{M N^{2/3}} \Prob{G(N',p)\in \mathcal{A}_{N',r,\ce{MN^{2/3}}}}}{1 - (1-\gamma N^{-2/3})}.$$
By Lemma \ref{Anrklemma}, this RHS is
\begin{align*}
&(1+o(1))N^{-1/2}\frac{1}{\gamma} N^{2/3}\cdot MN^{2/3} e^{3/4} g(\Lambda-s-M) N^{-5/6} b M^{-3/2}\\
&\qquad\times\,\exp\left(-b(\Lambda-s)-\tfrac{b^2}{2M}+\tfrac{(\Lambda-s-M)^3-(\Lambda-s)^3}{6}\right)\\
&= (1+o(1)) M^{-1/2} e^{-b^2/2M} \exp\left(\tfrac{(\Lambda-s-M)^3-(\Lambda-s)^3}{6}\right)\times g(\Lambda-s-M)\\
&\qquad\times \tfrac{e^{3/4}}{\gamma} b \exp\left(-b(\Lambda-s)-\tfrac{(\Lambda-s)^3}{6}\right).
\end{align*}
Recall that $g$ is uniformly bounded above and $\exp\left(\tfrac{(\Lambda-s-M)^3-(\Lambda-s)^3}{6}\right)\le 1$. Then observe that $M^{-1/2}e^{b^2/2M}\rightarrow 0$ as $M\rightarrow\infty$. Therefore
$$\lim_{M\rightarrow \infty} \limsup_{N\rightarrow \infty} \sup_{(N',p,r)\in \Psi^N_0(\lambda^-,\lambda^+,\epsilon,K,T)}N^{-1/2} \sum_{k=\fl{MN^{2/3}}}^{N'} k\Prob{ G(N',p)\in\mathcal A_{N',r,k}}=0.$$
So we have finished the proof of \eqref{eq:bdlargestackf}, and thus we have shown that \eqref{eq:EstackF} holds uniformly on $\Psi_0^N(\lambda^-,\lambda^+,\epsilon,K,T)$.

\subsubsection*{Small stacks}
To finish this proof of Lemma \ref{Estacklemma}, it remains to extend the convergence to uniformity on $r\in[1,\ce{K n^{1/3}}]$, rather than on $[\fl{\epsilon N^{1/3}},\ce{KN^{1/3}}]$.

\par
Recall from Lemma \ref{gregularitylemma} that $\alpha(b,\Lambda)\rightarrow 0$ as $b\downarrow 0$ uniformly on compact intervals in $\Lambda$. In particular
\begin{equation}\label{eq:limsupalphaoverPsi}\lim_{\epsilon\rightarrow 0} \limsup_{N\rightarrow\infty} \sup_{\substack{\Lambda\in[\lambda^-,\lambda^+]\\s\in[0,T],\,r\in[1,\epsilon N^{1/3}]}} \alpha\left(\tfrac{r}{N^{1/3}}, \Lambda-s\right) =0.\end{equation}

\par
Before Definition \ref{defnPsi}, we defined $k_r(F)$ for a forest $F$, but we can extend the definition to a general graph $G$ with vertex set $[N]$. If $|C(i)|$ is the size of the component containing vertex $i\in[N]$, then set $k_r(G):= |C(1)|+\ldots+|C(r)|$, so some components may be counted at least twice. In particular, $k_r(G)$ is an increasing function of graphs. However, for any $r$, the set $\mathcal{A}_{N,r}$ is a decreasing family of graphs. Therefore
\begin{equation}\label{eq:boundEkr}\E{k_r(G(N',p)) \,\big|\, G(N',p)\in \mathcal{A}_{N',r}} \le \E{k_r(G(N',p))}\le r \E{|C_{G(N',p)}(1)|},\end{equation}
where $|C_{G(N',p)}(1)|$ is the size of the component containing vertex 1 in $G(N',p)$. From Lemma \ref{JansonSpencerprop}, for the range of $N',p$ under consideration,
\begin{equation}\label{eq:unifEC1bd}\limsup_{N\rightarrow\infty}\sup_{\substack{N'\in[N-TN^{2/3},N]\\\Lambda(N,p)\in[\lambda^-,\lambda^+]}} N^{-1/3}\E{|C_{G(N',p)}(1)|}\le \Theta^{\lambda^+}<\infty.\end{equation}
We now take $r\le \epsilon N^{1/3}$ in \eqref{eq:boundEkr}, and apply \eqref{eq:unifEC1bd} to obtain
$$\lim_{\epsilon\rightarrow 0}\limsup_{N\rightarrow\infty}\sup_{\substack{N'\in[N-TN^{2/3},N]\\\Lambda(N,p)\in[\lambda^-,\lambda^+]\\r\in[1,\epsilon N^{1/3}]}}  N^{-2/3}\E{k_r(G(N',p)) \,\big|\, G(N',p)\in \mathcal{A}_{N',r}}\le\lim_{\epsilon\rightarrow 0}\epsilon \Theta^{\lambda^+}=0.$$
So, with \eqref{eq:limsupalphaoverPsi}, this gives
\begin{equation}\label{eq:approxgoodnear0}\lim_{\epsilon\rightarrow 0}\limsup_{N\rightarrow\infty}\sup_{\substack{N'\in[N-TN^{2/3},N]\\\Lambda(N,p)\in[\lambda^-,\lambda^+]\\r\in[1,\epsilon N^{1/3}]}}  \left| N^{-2/3}\E{k_r(G(N',p)) \,\big|\, G(N',p)\in \mathcal{A}_{N',r}} - \alpha\left(\tfrac{r}{N^{1/3}}, \Lambda-s\right) \right|=0.\end{equation}

We already know that \eqref{eq:EstackF} holds uniformly on $\Psi^N(\lambda^-,\lambda^+,\epsilon,K,T)$. So, combining with \eqref{eq:approxgoodnear0} and taking $\epsilon$ small shows that \eqref{eq:EstackF} does hold uniformly on $(N',p,r)\in\bar \Psi^N_0(\lambda^-,\lambda^+,K,T)$, as required for the full statement of Lemma \ref{Estacklemma}.
\end{proof}

\end{lemma}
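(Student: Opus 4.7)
The plan is to turn the conditional expectation in \eqref{eq:EstackF} into a ratio of sums indexed by the stack forest size $k$, approximate each summand uniformly via Lemma \ref{Anrklemma}, and identify the resulting ratio with the definition \eqref{eq:defnalpha} of $\alpha$. Since the forest-conditioning cancels between numerator and denominator, I would first write
$$
\E{k_r(F(N',p))\,\big|\, F(N',p)\in\mathcal A_{N',r}} = \frac{\sum_{k=r}^{N'} k\,\Prob{G(N',p)\in\mathcal A_{N',r,k}}}{\sum_{k=r}^{N'} \Prob{G(N',p)\in\mathcal A_{N',r,k}}}.
$$
Plugging in the asymptotics from Lemma \ref{Anrklemma} and truncating at $k\le\lceil MN^{2/3}\rceil$ for a fixed large $M$, the Riemann sums of step $N^{-2/3}$ (justified by the uniform continuity and decay of $g$ from Lemma \ref{gregularitylemma}) give
$$
N^{2/3}\cdot\frac{\int_0^M a^{-1/2}g(\Lambda-s-a)\exp\!\left(\tfrac{(\Lambda-s-a)^3}{6}\right)e^{-b^2/2a}\,da}{\int_0^M a^{-3/2}g(\Lambda-s-a)\exp\!\left(\tfrac{(\Lambda-s-a)^3}{6}\right)e^{-b^2/2a}\,da},
$$
which converges as $M\to\infty$ to $N^{2/3}\alpha(b,\Lambda-s)$, uniformly on those $(N',p,r)$ with $b(N,r)$ bounded away from $0$.

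The main obstacle will be to justify the truncation at $k\le MN^{2/3}$: the tail $k>MN^{2/3}$ must contribute negligibly to the numerator as $M\to\infty$, uniformly in the parameters. For this I would establish a geometric-tail estimate. Using the explicit formula
$$
\Prob{G(N,p)\in\mathcal A_{N,r,k}}=(1-p)^{\binom{N}{2}-\binom{N-k}{2}}\binom{N-r}{k-r}\!\left(\tfrac{p}{1-p}\right)^{k-r}\!rk^{k-r-1}f(N-k,p),
$$
obtained by splitting the graph into the stack forest on $k$ labelled vertices (using Cayley-type enumeration) and an arbitrary forest on the remaining $N-k$ vertices, together with Lemma \ref{acyclicboundlemma} applied to the ratio $f(N-k-1,p)/f(N-k,p)$ and Stirling on the binomial, I would compute the ratio $(k+1)\Prob{\cdot\in\mathcal A_{N',r,k+1}}/\bigl(k\,\Prob{\cdot\in\mathcal A_{N',r,k}}\bigr)$ and show that it is at most $1-\gamma N^{-2/3}$ for some absolute $\gamma>0$, uniformly over $k\in[MN^{2/3},N'-1]$ once $M$ is large. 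Summing this geometric series reduces the tail contribution to the leading term at $k=\lceil MN^{2/3}\rceil$; the factor $M^{-1/2}e^{-b^2/2M}$ emerging from Lemma \ref{Anrklemma} then drives it to $0$ as $M\to\infty$.

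Finally, the above gives convergence uniformly on $b(N,r)\in[\epsilon,K]$ for any fixed $\epsilon>0$, and it remains to extend to the small-stack regime $r\in[1,\epsilon N^{1/3}]$, where Lemma \ref{Anrklemma} no longer applies. Here I would argue directly: $k_r$ is an increasing graph functional while $\mathcal A_{N',r}$ is a decreasing event, so Harris gives
$$
\E{k_r(G(N',p))\,\big|\, G(N',p)\in\mathcal A_{N',r}}\le\E{k_r(G(N',p))}\le r\,\E{|C_{G(N',p)}(1)|},
$$
and Lemma \ref{JansonSpencerprop} bounds the last expectation by $O(N^{1/3})$ uniformly on $\Lambda\in[\lambda^-,\lambda^+]$, $s\in[0,T]$. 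Hence $N^{-2/3}\E{k_r\mid\cdots}=O(r/N^{1/3})=O(\epsilon)$. On the target side, Lemma \ref{gregularitylemma} gives $\alpha(r/N^{1/3},\Lambda-s)\to 0$ uniformly in $\Lambda-s$ bounded as $r/N^{1/3}\downarrow 0$. Both quantities are then $O(\epsilon)$ on this sub-regime, and combining with the convergence on $b\in[\epsilon,K]$ and sending $\epsilon\downarrow 0$ yields the full uniform statement on $\bar\Psi_0^N(\lambda^-,\lambda^+,K,T)$.
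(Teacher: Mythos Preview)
Your proposal is correct and follows essentially the same route as the paper: rewrite as a ratio of sums over $k$, approximate truncated sums by integrals via Lemma \ref{Anrklemma}, control the tail $k>MN^{2/3}$ by a geometric ratio bound on $(k+1)\Prob{\cdot}/k\Prob{\cdot}$, and handle small stacks via the Harris inequality plus Lemma \ref{JansonSpencerprop}. One small caveat: for the ratio $f(N'-k-1,p)/f(N'-k,p)$ in the geometric-tail step you cite Lemma \ref{acyclicboundlemma}, but that lemma only applies when $\Lambda(N'-k,p)$ stays in a fixed compact interval, which fails for $k$ up to $N'-1$; the paper instead uses the elementary two-sided bound of Lemma \ref{fNvsfN+1lemma} together with Lemmas \ref{JansonSpencerprop} and \ref{Thetalimit}, which is valid for all $k$.
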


\subsection{Proof of Proposition \ref{limdriftprop}:
convergence of the drift}\label{Etiltsection}

Recall that $\mathcal{A}_{N,r}\subseteq \mathcal{F}_N$ is the set of forests on $[N]$ where vertices $1,\ldots,r$ are separated. Let $F$ be a uniform choice from $\mathcal A_{N,r}$. Then
$$\Prob{F\in \mathcal A_{N,r+1} \,\big|\, F\in \mathcal A_{N,r,k}} = \frac{N-k}{N-r},$$
as the labels of the $k-r$ other vertices in the stack forest containing vertices $[r]$ are uniformly chosen from $\{r+1,\ldots,N\}$. Furthermore, $\mathcal A_{N,r+1}\subseteq \mathcal A_{N,r}$, and so
\begin{align*}
\frac{\Prob{F(N,p)\in\mathcal A_{N,r+1}}}{\Prob{F(N,p)\in\mathcal A_{N,r}}} &= \Prob{F(N,p)\in\mathcal A_{N,r+1} \, \big |\, F(N,p)\in\mathcal A_{N,r}}\\
& = \sum_{k=r}^N \Prob{F(N,p)\in\mathcal A_{N,r+1} \, \big |\, F(N,p)\in\mathcal A_{N,r,k}}\\
&\quad \times \,\Prob{F(N,p)\in\mathcal A_{N,r,k} \, \big |\, F(N,p)\in\mathcal A_{N,r}}\\
& = \frac{N- \E{k_r(F(N,p))\,\big|\, F(N,p)\in\mathcal A_{N,r}}}{N-r}.
\end{align*}

It follows that uniformly on $(N',p,r)\in \bar{\Psi}_0^N(\lambda^-,\lambda^+,K,T)$, as in Lemma \ref{Estacklemma}, as $N\rightarrow\infty$,
$$N^{1/3}\left[1- \frac{\Prob{F(N',p)\in\mathcal A_{N',r+1}}}{\Prob{F(N',p)\in\mathcal A_{N',r}}}\right]- \alpha \left(\tfrac{r}{N^{1/3}}, \Lambda-s\right)\rightarrow 0.$$

The sequence $p(N)$ satisfies the conditions in the statement of Theorem \ref{FNptheorem}, that is $\Lambda(N,p(N))\rightarrow\lambda\in\R$ in the notation of Definition \ref{defnrescalings}. So in fact we may replace $\Lambda$ with $\lambda$, obtaining, again uniformly on $(N',p,r)\in \bar{\Psi}_0^N(\lambda^-,\lambda^+,K,T)$,
\begin{equation}\label{eq:ratioAnr}N^{1/3}\left[1- \frac{\Prob{F(N',p)\in\mathcal A_{N',r+1}}}{\Prob{F(N',p)\in\mathcal A_{N',r}}}\right]- \alpha \left(\tfrac{r}{N^{1/3}}, \lambda-s\right)\rightarrow 0.\end{equation}

Now we can return to the increments of $Z^{N,p}$, the exploration process of $F(N,p)$. Recall Lemma \ref{incrdescription}, which asserts that
$$\Prob{Z^{N,p}_{n+1} - Z^{N,p}_n = \ell-1 \,\big|\, Z^{N,p}_n=r} \propto \Prob{ B^{N-n-r,p}=\ell}$$
$$ \times \,\Prob{F(N-n-1,p)\in \mathcal{A}_{N-n-1,r+\ell-1}},\quad \ell\ge 0,$$
where $B^{N-n-r,p}\sim \Bin{N-n-r}{p}$. So we define
\begin{equation}\label{eq:defnqNmr}q^{N,n,r}_\ell := \Prob{B^{N-n-r,p}=\ell}\times \frac{\Prob{F(N-n-1,p)\in \mathcal{A}_{N-n-1,r+\ell-1}}}{\Prob{F(N-n-1,p)\in \mathcal{A}_{N-n-1,r-1}}}.\end{equation}
Therefore we also have $\Prob{Z^{N,p}_{n+1} - Z^{N,p}_n = \ell-1 \,\big|\, Z^{N,p}_n=r}\propto q^{N,n,r}_\ell$. Heuristically, from \eqref{eq:ratioAnr}, this quotient, which we will think of as a \emph{weight}, should be approximately
$$\left(1-\alpha\left( \tfrac{r}{N^{1/3}},\Lambda - \tfrac{n}{N^{2/3}}\right) N^{-1/3}\right)^\ell,$$
and so we will be able to approximate $\sum q^{N,n,r}_\ell$ by the probability generating function of $B^{N-n-r,p}$. Indeed, this approximation only breaks down when $r+\ell-1 \ge KN^{1/3}$, that is, outside the range of \eqref{eq:ratioAnr}. Therefore, for any $\delta>0$, for large enough $N$, we have, for all $n\in [0,TN^{2/3}]$, $r\in[1,\frac{K}{2} N^{1/3}]$, and $\ell \le N^{1/4}$.
$$\frac{\Prob{F(N-n-1,p)\in \mathcal{A}_{N-n-1,r+\ell-1}}}{\Prob{F(N-n-1,p)\in \mathcal{A}_{N-n-1,r-1}}}\le \prod_{i=0}^{\ell-1} \left(1 - \left(\alpha\left(\tfrac{r+i-1}{N^{1/3}}, \lambda - \tfrac{n-1}{N^{2/3}} \right)-\delta\right)N^{-1/3} \right).$$
The function $\alpha$ is uniformly continuous. Since the range of $i$ in this product is asymptotically negligible relative to $N^{1/3}$, for large enough $N$, for large enough $N$ we may replace $r+i-1$ by $r$, and $\Lambda=\Lambda(N,p)$ by $\lambda$. That is,
$$\frac{\Prob{F(N-n-1,p)\in \mathcal{A}_{N-n-1,r+\ell-1}}}{\Prob{F(N-n-1,p)\in \mathcal{A}_{N-n-1,r-1}}}\le \left(1 - \left(\alpha\left(\tfrac{r}{N^{1/3}}, \lambda - \tfrac{n}{N^{2/3}} \right)-\delta\right)N^{-1/3} \right)^\ell.$$
An identical argument gives
$$\frac{\Prob{F(N-n-1,p)\in \mathcal{A}_{N-n-1,r+\ell-1}}}{\Prob{F(N-n-1,p)\in \mathcal{A}_{N-n-1,r-1}}}\ge \left(1 - \left(\alpha\left(\tfrac{r}{N^{1/3}}, \lambda - \tfrac{n}{N^{2/3}} \right)+\delta\right)N^{-1/3} \right)^\ell,$$
under the same conditions. From now on, we write $\alpha^N_{n,r}= \alpha\left(\tfrac{r}{N^{1/3}}, \lambda - \tfrac{n}{N^{2/3}} \right)$ for brevity.

\par
Keeping $\delta>0$ fixed, we now address the sums $\sum_{\ell=0}^\infty q_\ell^{N,n,r}$ and $\sum_{\ell=0}^\infty (\ell-1) q_\ell^{N,n,r}$. (Note first that both $q^{N,n,r}_0 $ and $q^{N,n,r}_1\rightarrow 1/e$, so these sums are uniformly bounded below.) For large enough $N$, we have, again for all $n\in [0,TN^{2/3}]$, $r\in[1,\frac{K}{2} N^{1/3}]$,
\begin{align*}
\sum_{\ell=0}^{N-n-r} q_\ell^{N,n,r} &\le \sum_{\ell=0}^{\ce{N^{1/4}}}\Prob{B^{N-n-r,p}=\ell} \left(1-(\alpha^N_{n,r} - \delta)N^{-1/3}\right)^{\ell} + \Prob{B^{N-n-r,p}\ge N^{1/4}}\\
&\le \left[(1-p)+p\left(1-(\alpha^N_{n,r}-\delta) N^{-1/3}\right)\right]^{N-n-r} + \Prob{B^{N-n-r,p}\ge N^{1/4}}
\end{align*}
Now, note that
$$\left[(1-p)+p\left(1-(\alpha^N_{n,r}-\delta) N^{-1/3}\right)\right]^{N-n-r} = \left[1 - (\alpha^N_{n,r}-\delta)N^{-4/3} + O(N^{-5/3}) \right]^{N-n-r},$$
from which we find that
\begin{equation}\label{eq:asympexpcorrection}N^{1/3}\left[1 - \left[(1-p)+p\left(1-(\alpha^N_{n,r}-\delta) N^{-1/3}\right)\right]^{N-n-r}\right] + \left(\alpha^N_{n,r}-\delta \right) \rightarrow 0,\end{equation}
uniformly as $N\rightarrow\infty$. The probability $\Prob{B^{N-n-r,p}\ge N^{1/4}}$ decays exponentially with some positive power of $N$, so we have shown that for large enough $N$,
\begin{equation}\label{eq:upbdsumq}\sum_{\ell=0}^{N-n-r} q_\ell ^{N,n,r} \le 1 - \left(\alpha^N_{n,r}-2\delta\right) N^{-1/3}.\end{equation}
Under the same conditions,
$$\sum_{\ell=0}^{N-n-r} q_\ell ^{N,n,r} \ge 1 - \left(\alpha^N_{n,r}+2\delta\right) N^{-1/3}.$$

Now we consider the sum $\sum \ell q^{N,n,r}_\ell$.
\begin{align}
\sum_{\ell=0}^{N-n-r} \ell q_\ell^{N,n,r} &\le \sum_{\ell=0}^{N-n-r}\ell\Prob{B^{N-n-r,p}=\ell} \left(1-(\alpha^N_{n,r} - \delta)N^{-1/3}\right)^{\ell}\nonumber\\
&\qquad + N\Prob{B^{N-n-r,p}\ge N^{1/4}}\nonumber\\
&\le (N-n-r)p\left(1-(\alpha^N_{n,r}-\delta)N^{-1/3}\right)\nonumber\\
&\qquad\times\left[(1-p)+p\left(1-(\alpha^N_{n,r}-\delta) N^{-1/3}\right)\right]^{N-n-r-1}\label{eq:lqfirstmoment}\\
&\qquad\qquad+ N\Prob{B^{N-n-r,p}\ge N^{1/4}}.\nonumber
\end{align}
We can treat the term $\left[(1-p)+p\left(1-(\alpha^N_{n,r}-\delta) N^{-1/3}\right)\right]^{N-n-r-1}$ as in \eqref{eq:asympexpcorrection}. We also have
$$(N-n-r)p\left(1-(\alpha^N_{n,r}-\delta)N^{-1/3}\right) = 1 + \left(\lambda - \tfrac{n}{N^{2/3}} - (\alpha^N_{n,r}-\delta)\right) N^{-1/3} + O(N^{-2/3}).$$
So, in a similar fashion to \eqref{eq:upbdsumq}, we establish
\begin{equation}\label{eq:upbdsumlq}\sum_{\ell=0}^{N-n-r} \ell q_\ell^{N,n,r} \le 1+ \left(\lambda-2\alpha^N_{n,r}+3\delta- \tfrac{n}{N^{2/3}}\right) N^{-1/3},\end{equation}
and
$$\sum_{\ell=0}^{N-n-r} \ell q_\ell^{N,n,r} \ge 1+ \left(\lambda-2\alpha^N_{n,r}-3\delta- \tfrac{n}{N^{2/3}}\right) N^{-1/3}.$$
Therefore, (where each successive statement holds whenever $(N-n,p,r)\in\bar\Psi^N_0(\lambda^-,\lambda^+,\frac{K}{2},T)$ for large enough $N$)
\begin{align*}
\E{Z^{N,p}_{n+1} - Z^{N,p}_n \,\big|\, Z^{N,p}_n=r} &= \frac{\sum_{\ell=0}^{N-n-r} \ell q_\ell^{N,n,r} - \sum_{\ell=0}^{N-n-r} q_\ell^{N,n,r}}{\sum_{\ell=0}^{N-n-r} q_\ell^{N,n,r}}\\
&\le \frac{\left(\lambda-2\alpha^N_{n,r}+3\delta- \tfrac{n}{N^{2/3}}\right)N^{-1/3} + \left(\alpha^N_{n,r}+2\delta\right) N^{-1/3}}{1+\left(\lambda-\alpha^N_{n,r}-2\delta- \tfrac{n}{N^{2/3}}\right) N^{-1/3}}\\
&\le \left(\lambda - \alpha^N_{n,r} - \tfrac{n}{N^{2/3}} +6\delta \right) N^{-1/3}.
\end{align*}
Similarly
$$\E{Z^{N,p}_{n+1} - Z^{N,p}_n \,\big|\, Z^{N,p}_n=r} \ge \left(\lambda - \alpha^N_{n,r} - \tfrac{n}{N^{2/3}} -6\delta \right) N^{-1/3},$$
and so since $\delta>0$ was arbitrary, after replacing $\frac{K}{2}$ with $K$, we have completed the proof of \eqref{eq:limdrift} in Proposition \ref{limdriftprop}.

%\subsubsection{An Alternative Expression}

%\begin{lemma}Fix $\lambda\in\R$, and set $p=p(n)=\frac{1+\lambda n^{-1/3}}{n}$. Then
%$$\frac{\mathbb P_{n,p}(\mathcal A_{n,r+1})}{\mathbb P_{n,p}(\mathcal A_{n,r})}=1-n^{-1/3}\left[\frac{\int_0^\infty [\frac{1}{b}-\frac{b}{a}]a^{-3/2} p(\lambda'-a)\exp\left(-\tfrac{b^2}{2a}\right)\mathrm{d}a}{\int_0^\infty a^{-3/2} p(\lambda'-a)\exp\left(-\tfrac{b^2}{2a}\right)\mathrm{d}a}\right] + o(n^{-1/3}),$$
%as $n\rightarrow\infty$.

%\begin{proof}
%This isn't important for now. It gives another expression for $\alpha(b,\lambda)$. Will need to confirm that contributions to this sum from off the scale disappear in the limit as before, but argument might not be identical.

%\end{proof}

%\end{lemma}

\subsection{Proof of Proposition \ref{limdriftprop}:
variance, jumps and reflection}\label{varjumpproof}

\subsubsection*{Variance of increments}
We can show \eqref{eq:limvar} using the estimates from Section \ref{Etiltsection}. Recall the definition of $q^{N,n,r}_\ell$ from \eqref{eq:defnqNmr}. As in \eqref{eq:lqfirstmoment}, we have
\begin{align*}
\sum_{\ell=0}^{N-n-r} \ell(\ell-1)q_\ell^{N,n,r} &\le (N-n-r)(N-n-r-1) p^2 \left(1-(\alpha^N_{n,r}-\delta)N^{-1/3}\right)^2\\
&\qquad\times\left[(1-p)+p\left(1-(\alpha^N_{n,r}-\delta) N^{-1/3}\right)\right]^{N-n-r-2}\\
&\qquad\qquad+ N\Prob{B^{N-n-r,p}\ge N^{1/4}}.
\end{align*}

Again, we use \eqref{eq:asympexpcorrection} and similarly to \eqref{eq:upbdsumlq}, we have
\begin{multline*}
1 + \left(2\lambda - 3\alpha^N_{n,r}-4\delta - \tfrac{2n}{N^{2/3}}\right)N^{-1/3}
\\
\le\sum_{\ell=0}^{N-n-r} \ell(\ell-1) q_\ell^{N,n,r} \le 1 + \left(2\lambda - 3\alpha^N_{n,r}+4\delta - \tfrac{2n}{N^{2/3}}\right)N^{-1/3}.
\end{multline*}
In particular, we obtain
$$\sum_{\ell=0}^{N-n-r} (\ell-1)^2 q_\ell^{N,n,r} = \sum_{\ell=0}^{N-n-r} \ell(\ell-1) q_\ell^{N,n,r} - \sum_{\ell=0}^{N-n-r} \ell  q_\ell^{N,n,r}+\sum_{\ell=0}^{N-n-r}  q_\ell^{N,n,r}\rightarrow 1,$$
uniformly, which is exactly \eqref{eq:limvar}.

\subsubsection*{Jumps in the limit}
For any $n\in[N]$,
\begin{align*}
\Prob{|Z^{N,p}_{n+1} - Z^{N,p}_n| > \delta N^{1/3}}&\le \;\Prob{\exists v\in[N], \mathrm{deg}_{F(N,p)}(v)>\delta N^{1/3}}\\
&\stackrel{\mathclap{\text{Prop \ref{barGstochdom}}}}{\le} \;\Prob{\exists v\in[N], \mathrm{deg}_{G(N,p)}(v)>\delta N^{1/3}}\\
&\le \;N \Prob{\mathrm{deg}_{G(N,p)}(1) > \delta N^{1/3}}.
\end{align*}
But $\mathrm{deg}_{G(N,p)}(1) \sim \Bin{N-1}{p}$, and so for any $\delta>0$, this final term vanishes exponentially fast. So \eqref{eq:limnojumps} follows.

\subsubsection*{Speed at the boundary}
Finally, we check that the discrete processes $(Z^{N,p})$ do not get stuck at zero. By Lemma \ref{incrdescription}, we have
$$\frac{\Prob{Z^{N,p}_{n+1}=1 \,\big|\, Z^{N,p}_n=0}}{\Prob{Z^{N,p}_{n+1}=0\,\big|\, Z^{N,p}_n=0}} = \frac{\Prob{B^{N-n-1,p}=1}}{\Prob{B^{N-n-1,p}=0}}=\frac{(N-n-1)p}{1-p}.$$
Therefore
$$\liminf_{N\rightarrow\infty} \inf_{n\in[0,TN^{2/3}]} \frac{\Prob{Z^{N,p}_{n+1}=1 \,\big|\, Z^{N,p}_n=0}}{\Prob{Z^{N,p}_{n+1}=0\,\big|\, Z^{N,p}_n=0}}\ge 1,$$
and so
$$\liminf_{N\rightarrow\infty} \inf_{n\in[0,TN^{2/3}]} \E{\left[Z^{N,p}_{n+1}\right]^2 \,\big|\, Z^{N,p}_n=0}\ge \frac12,$$
as required for \eqref{eq:limnonsticky}.

This completes the proof of Proposition \ref{limdriftprop}
(subject to the proofs of Lemmas \ref{acyclicboundlemma},
\ref{Anrklemma}, and \ref{geomratiolemma} in Section 
\ref{section:combinatorial}).

\section{Excursions and component sizes}\label{cptsizes}
In this section, we will prove that Theorem \ref{FNptheorem} follows from Theorem \ref{explconvtheorem}.
\par
As in Aldous \cite{Aldous97}, we must check that excursions of the limiting reflected SDE are matched by excursions of the discrete exploration processes. In particular, it must happen with vanishing probability that a zero of the limiting process $Z^\lambda$ appears only as the limit of small \emph{positive} local minima of the discrete processes $Z^{N,p}$. In addition, we must show that there are with high probability no large discrete components which appear late enough in the exploration that they are not represented in the limit. Several stages of the argument will be based on a comparison of $F(N,p)$ and the original model $G(N,p)$, for which some of the results are easier, or known.

\subsection{Large components are explored early}

Theorem \ref{explconvtheorem} establishes convergence of the exploration processes on compact time intervals. To use this to study the sizes of the largest components in $F(N,p)$, we need to ensure that these largest components appear early in the exploration process. We establish this in the following series of lemmas.

\par
%The following lemma follows from Janson's corresponding result on $G(N,p)$ {\bf [CITE]} and the coupling in Lemma \ref{barGstochdom}. {\bf Also follows from bound on $\E{|C(v)|}$ plus Markov?}

\begin{lemma}Fix $\lambda^+\in\R$. Then
\begin{equation}\label{eq:cptsizeupbd}\lim_{\gamma\rightarrow\infty}\limsup_{N\rightarrow\infty}\sup_{\Lambda(N,p)\le \lambda^+}\Prob{ C_1(F(N,p)) \ge \gamma N^{2/3}} = 0.\end{equation}
\begin{proof}
We have
\begin{align}
\E{|C_{F(N,p)}(v)|}&=\E{\frac{1}{N}\sum_{i\ge 1}C_i(F(N,p))}\label{eq:decomposeECv}\\
&\ge \frac{1}{N}\E{|C_1(F(N,p))|^2}\ge \frac{(\gamma N^{2/3})^2}{N} \Prob{C_1(F(N,p)) \ge \gamma N^{2/3}}.\nonumber
\end{align}
Result \eqref{eq:cptsizeupbd} then follows by using Lemma \ref{JansonSpencerprop} and the coupling of Lemma \ref{barGstochdom} to control the first expectation in \eqref{eq:decomposeECv}.
\end{proof}
\end{lemma}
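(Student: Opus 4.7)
The plan is to use a second-moment argument based on the size-biased identity for the component containing a uniform vertex, combined with the bound on $\E{|C_{F(N,p)}(v)|}$ coming from Lemma \ref{JansonSpencerprop}.

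First, I would recall that if $v$ is a uniformly chosen vertex of $F(N,p)$, then
\[
\E{|C_{F(N,p)}(v)|} \;=\; \frac{1}{N}\,\E{\sum_{i\geq 1} C_i(F(N,p))^2},
\]
since $v$ lies in component $C_i$ with probability $|C_i|/N$. Dropping all terms except $i=1$ gives the one-sided inequality $N\,\E{|C_{F(N,p)}(v)|} \geq \E{C_1(F(N,p))^2}$, which is the key reduction of the largest-component tail to a moment bound.

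Next, I would apply Markov's inequality to $C_1(F(N,p))^2$ to obtain
\[
\Prob{C_1(F(N,p)) \geq \gamma N^{2/3}} \;\leq\; \frac{\E{C_1(F(N,p))^2}}{\gamma^2 N^{4/3}} \;\leq\; \frac{\E{|C_{F(N,p)}(v)|}}{\gamma^2 N^{1/3}}.
\]
Finally, I would invoke Lemma \ref{JansonSpencerprop}, which via the coupling of Lemma \ref{barGstochdom} gives $\limsup_N N^{-1/3}\E{|C_{F(N,p)}(v)|} \leq \Theta^{\Lambda(N,p)}$, together with the monotonicity from Lemma \ref{Thetalimit} to obtain $\Theta^{\Lambda(N,p)} \leq \Theta^{\lambda^+}$ uniformly on $\Lambda(N,p)\leq \lambda^+$. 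Substituting yields the uniform bound $\limsup_N \sup_{\Lambda(N,p)\leq\lambda^+} \Prob{C_1(F(N,p)) \geq \gamma N^{2/3}} \leq \Theta^{\lambda^+}/\gamma^2$, which vanishes as $\gamma\to\infty$.

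There is no serious obstacle here: the argument is entirely routine once the size-biased identity is in hand, and the only subtlety is ensuring that the uniformity in $\Lambda(N,p)\leq \lambda^+$ is inherited from the monotonicity of $\Theta^\lambda$ and from the fact that the upper bound in Lemma \ref{JansonSpencerprop} is derived via stochastic domination by $G(N,p)$, which is monotone in $p$ by Lemma \ref{GNpcoupling}.
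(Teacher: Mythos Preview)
Your proof is correct and follows essentially the same approach as the paper: both use the size-biased identity $\E{|C_{F(N,p)}(v)|}=N^{-1}\E{\sum_i C_i^2}\ge N^{-1}\E{C_1^2}$, then bound the tail of $C_1$ via Markov and control $\E{|C_{F(N,p)}(v)|}$ through Lemma~\ref{JansonSpencerprop} and the coupling of Lemma~\ref{barGstochdom}. Your write-up is actually a bit more explicit than the paper's in justifying the uniformity over $\Lambda(N,p)\le\lambda^+$ via monotonicity (Lemmas~\ref{GNpcoupling} and~\ref{Thetalimit}), which the paper leaves implicit here.
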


The following lemma shows that critical components will with high probability include a vertex with label $O(N^{1/3})$.

\begin{lemma}\label{bigcptsmalllabel}Fix $\epsilon>0$, and $\lambda^+\in\R$. Then
\begin{equation}\label{eq:bigcptsmalllabel} \lim_{\Gamma\rightarrow\infty}\limsup_{N\rightarrow\infty}\sup_{\Lambda\le \lambda^+} \mathbb{P}\left(\exists \text{ cpt }C\text{ in } F\left(N,p\right)\, :\,|C|\ge \epsilon N^{2/3}\right.\hspace{0.5in}\end{equation}
$$\hspace{1.8in}\left. \text{and }C\cap\left\{1,\ldots,\fl{\Gamma N^{1/3}}\right\}=\varnothing\right)= 0.$$
\begin{proof}
Applying Markov's inequality to \eqref{eq:barECvlimit}, and summing over all vertices,
$$\limsup_{N\rightarrow\infty}\sup_{\Lambda(N,p)\le \lambda^+}N^{-2/3}\E{\left| \left\{v\in[N]\,:\, \left|C_{F(N,p)}(v)\right|\ge \epsilon N^{2/3}    \right\} \right|}\le \frac{\Theta^{\lambda^+}}{\epsilon}.$$
Therefore,
$$\limsup_{N\rightarrow\infty}\sup_{\Lambda(N,p)\le \lambda^+}\E{\#\text{cpts }C\text{ in }F\left(N,p\right)\text{ s.t. }|C|\ge \epsilon N^{2/3}}\le \frac{\Theta^{\lambda^+}}{\epsilon^2}.$$
Then, since the labelling is independent of the component sizes in $F(N,p)$,
\begin{align*}
&\limsup_{N\rightarrow\infty}\sup_{\Lambda(N,p)\le \lambda^+}\mathbb{E}\left[\#\text{cpts }C\text{ in }F\left(N,p\right)\text{ s.t. }|C|\ge \epsilon N^{2/3}\right.\\
&\hspace{2.5in}\left.\text{and }C\cap\left\{1,\ldots,\fl{\Gamma N^{1/3}}\right\}=\varnothing\right]\\
&\qquad\le \frac{\Theta^{\lambda^+}}{\epsilon^2}\times \lim_{N\rightarrow\infty}\frac{\binom{N-\fl{\Gamma N^{1/3}}}{\fl{\epsilon N^{2/3}}}}{\binom{N}{\fl{\epsilon N^{2/3}}}}\\
&\qquad\le \frac{\Theta^{\lambda^+}}{\epsilon^2} \exp(-\Gamma \epsilon),
\end{align*}
where the final limit can be evaluated using Stirling's approximation. \eqref{eq:bigcptsmalllabel} follows.
\end{proof}
\end{lemma}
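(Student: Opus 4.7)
The plan is to bound the expected number of ``large'' components of $F(N,p)$ that avoid the smallest $\Gamma N^{1/3}$ vertex labels, and then conclude by Markov's inequality. The two ingredients I would combine are the moment bound from Lemma \ref{JansonSpencerprop} (together with the coupling of Lemma \ref{barGstochdom}) and the vertex-exchangeability of the law of $F(N,p)$.

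First I would note the identity $\E{|C_{F(N,p)}(v)|} = N^{-1}\E{\sum_i C_i(F(N,p))^2}$, valid because $C(v)$ is a size-biased pick from the components. Combined with \eqref{eq:barECvlimit} and the monotonicity of $\Theta^\lambda$ from Lemma \ref{Thetalimit}, this gives $\E{\sum_i C_i(F(N,p))^2} \le \Theta^{\lambda^+}N^{4/3}(1+o(1))$, uniformly in $\Lambda(N,p) \le \lambda^+$. Since each component of size at least $\epsilon N^{2/3}$ contributes at least $(\epsilon N^{2/3})^2$ to this sum,
\begin{equation*}
\E{\#\{C\text{ cpt of }F(N,p) : |C| \ge \epsilon N^{2/3}\}} \le \frac{\Theta^{\lambda^+}}{\epsilon^2}(1+o(1)),
\end{equation*}
uniformly in $\Lambda(N,p) \le \lambda^+$.

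Next I would exploit exchangeability. Writing $L := \fl{\Gamma N^{1/3}}$, by exchangeability the probability that a fixed set $C \subseteq [N]$ with $|C|=k$ is a component depends only on $k$; call this $f(k)$. Then $\E{\#\{C : |C|=k\}} = \binom{N}{k}f(k)$, while
\begin{equation*}
\E{\#\{C : |C| = k,\; C \cap [L]=\emptyset\}} = \binom{N-L}{k}f(k) = \frac{\binom{N-L}{k}}{\binom{N}{k}} \E{\#\{C : |C|=k\}}.
\end{equation*}
The ratio $\binom{N-L}{k}/\binom{N}{k}$ equals $\prod_{i=0}^{k-1}(1 - L/(N-i)) \le (1-L/N)^k \le e^{-Lk/N}$, which for $k \ge \epsilon N^{2/3}$ is at most $e^{-\Gamma \epsilon}(1+o(1))$, uniformly in $k$. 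Summing over $k \ge \epsilon N^{2/3}$ and combining with the moment bound yields
\begin{equation*}
\E{\#\{C : |C|\ge \epsilon N^{2/3},\; C \cap [L]=\emptyset\}} \le \frac{\Theta^{\lambda^+}}{\epsilon^2}e^{-\Gamma\epsilon}(1+o(1)),
\end{equation*}
uniformly in $\Lambda(N,p)\le\lambda^+$. Markov's inequality then gives \eqref{eq:bigcptsmalllabel}, since this bound vanishes as $\Gamma\to\infty$.

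The argument is routine given the preparatory lemmas; no step seems to be a genuine obstacle. The one small care point is obtaining the bound on $\binom{N-L}{k}/\binom{N}{k}$ uniformly for all $k$ in the relevant range, but this is immediate from the elementary inequality $(1-L/N)^k \le e^{-Lk/N}$ after noting $Lk/N \ge \Gamma\epsilon(1+o(1))$.
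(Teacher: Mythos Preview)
Your proposal is correct and follows essentially the same route as the paper: bound the expected number of large components via the size-biased moment estimate \eqref{eq:barECvlimit}, then use vertex-exchangeability to reduce the expected number that avoid $[L]$ by the binomial ratio $\binom{N-L}{k}/\binom{N}{k}\le e^{-\Gamma\epsilon}(1+o(1))$, and conclude by Markov. The only cosmetic differences are that the paper phrases the first step as a Markov bound on the number of \emph{vertices} in large components (then divides by $\epsilon N^{2/3}$) rather than going through $\sum_i C_i^2$, and evaluates the binomial ratio via Stirling rather than the direct product inequality you use; both yield the same $\Theta^{\lambda^+}/\epsilon^2\cdot e^{-\Gamma\epsilon}$ bound.
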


We now use the previous result to show that the largest components will typically appear near the start of the exploration process. This will be important later, since if large critical components appear arbitrarily late in the exploration process, then they cannot be treated via convergence on compact intervals.
\begin{lemma}Fix $\epsilon>0$ and $\lambda^+\in\R$ as before. Then
\begin{equation}\label{eq:bigcptlateinexp} \lim_{T\rightarrow\infty}\limsup_{N\rightarrow\infty}\sup_{\Lambda(N,p)\le \lambda^+} \mathbb{P}\left(\exists \text{ cpt }C\text{ in } F\left(N,p\right)\, :\,|C|\ge \epsilon N^{2/3}\right.\hspace{0.5in}\end{equation}
$$\hspace{1.8in}\left. \text{and }C\cap\left\{v_1,\ldots,v_{\fl{TN^{2/3}}}\right\}=\varnothing\right)= 0,$$
where $(v_1,v_2,\ldots,v_N)$ is the exploration process of $F(N,p)$.

\begin{proof}
Fix $\Gamma>0$, and let $C_{F(N,p)}(k)$ be the component of vertex $k$ in $F(N,p)$. We define the events
$$A^{\Gamma,T}(F(N,p)):=\{ |C_{F(N,p)}(1)|+\ldots+| C_{F(N,p)}(\fl{\Gamma N^{1/3}})| > TN^{2/3}\},$$
$$B^{\epsilon,\Gamma}(F(N,p)):= \left\{ \exists \text{ cpt }C \text{ in }F(N,p)\, :\, |C|\ge \epsilon N^{2/3},\, C\cap\left\{1,\ldots,\fl{\Gamma N^{1/3}}\right\}=\varnothing \right\},$$
as in Lemma \ref{bigcptsmalllabel}. Then, by Markov's inequality,
$$\Prob{ A^{\Gamma, T}(F(N,p))} \le \frac{\Gamma N^{1/3}\E{|C_{F(N,p)}(1)|}}{TN^{2/3}},$$
So by Lemma \ref{JansonSpencerprop}
\begin{equation}\label{eq:probAGammaT}\limsup_{N\rightarrow\infty}\sup_{\Lambda(N,p)\le \lambda^+}\Prob{ A^{\Gamma, T}(F(N,p))} \le  \frac{\Theta^{\lambda^+}\Gamma}{T}.\end{equation}
Whenever $F(N,p)$ contains a component of size at least $\epsilon N^{2/3}$ which is not exhausted during the first $TN^{2/3}$ steps of the exploration process, at least one of $A^{\Gamma,T}(F(N,p))$ and $B^{\epsilon,\Gamma}(F(N,p))$ must hold. So take $\Gamma=\sqrt{T}$, then let $T\rightarrow\infty$. By \eqref{eq:probAGammaT} and Lemma \ref{bigcptsmalllabel}, the result follows.
\end{proof}

\end{lemma}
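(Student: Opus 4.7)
The plan is to reduce the bad event to two sub-events that are already controlled. The key structural observation is that the breadth-first exploration visits components of $F(N,p)$ in the order of their smallest-labelled vertex: when the exploration first reaches a component $C$, it has already completed every component whose smallest label is smaller than that of $C$. Hence, if $C$ contains a vertex of label at most $\fl{\Gamma N^{1/3}}$, then the exploration reaches $C$ no later than step $\sum_{k=1}^{\fl{\Gamma N^{1/3}}} |C_{F(N,p)}(k)|$, since the components fully processed before $C$ are all contained in $\{1,\ldots,\fl{\Gamma N^{1/3}}\}$.

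First I would fix $\Gamma,T>0$ and introduce the events
\begin{align*}
A^{\epsilon,\Gamma} &:= \bigl\{\exists\text{ cpt } C \text{ in } F(N,p)\,:\,|C|\geq\epsilon N^{2/3},\ C\cap\{1,\ldots,\fl{\Gamma N^{1/3}}\}=\varnothing\bigr\},\\
B^{\Gamma,T} &:= \Bigl\{\sum_{k=1}^{\fl{\Gamma N^{1/3}}}|C_{F(N,p)}(k)| > TN^{2/3}\Bigr\}.
\end{align*}
By the observation above, the event in \eqref{eq:bigcptlateinexp} is contained in $A^{\epsilon,\Gamma}\cup B^{\Gamma,T}$: if neither holds, then every component of size at least $\epsilon N^{2/3}$ meets $\{1,\ldots,\fl{\Gamma N^{1/3}}\}$, and the exploration begins processing each such component by step $\sum_{k=1}^{\fl{\Gamma N^{1/3}}}|C_{F(N,p)}(k)| \leq TN^{2/3}$.

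The event $A^{\epsilon,\Gamma}$ is controlled directly by Lemma \ref{bigcptsmalllabel}: its probability vanishes as $\Gamma\to\infty$, uniformly in $\Lambda(N,p)\leq\lambda^+$ for large $N$. For $B^{\Gamma,T}$, Markov's inequality combined with Lemma \ref{JansonSpencerprop} (applied via the edge-coupling of Lemma \ref{barGstochdom}) gives
$$\limsup_{N\to\infty}\sup_{\Lambda(N,p)\leq\lambda^+}\mathbb{P}\bigl(B^{\Gamma,T}\bigr)\leq \frac{\Gamma\,\Theta^{\lambda^+}}{T}.$$
Setting $\Gamma=\sqrt{T}$ and then letting $T\to\infty$ sends both bounds to zero, giving \eqref{eq:bigcptlateinexp}.

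The argument is largely bookkeeping once the ``components-explored-in-order-of-smallest-label'' fact is in hand, and I do not anticipate any real obstacle. The only point requiring a line of care is the importation of the $G(N,p)$ estimate of Lemma \ref{JansonSpencerprop} to the forest model in order to bound $\mathbb{E}|C_{F(N,p)}(1)|$; this is immediate from Lemma \ref{barGstochdom}, since the size of the component containing a fixed vertex is an increasing functional of the edge set.
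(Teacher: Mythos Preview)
Your proof is correct and is essentially identical to the paper's own argument: the same two-event decomposition (a large component missing small labels, versus the small-label components occupying too many exploration steps), the same Markov bound $\Gamma\Theta^{\lambda^+}/T$ via Lemma~\ref{JansonSpencerprop}, and the same choice $\Gamma=\sqrt{T}$. You have simply swapped the names $A$ and $B$ relative to the paper, and you make the ``components are explored in order of smallest label'' observation a bit more explicit than the paper does.
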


\subsection{Components and excursions up to time $T$ - notation and goal}
Throughout this section, we fix $\lambda\in\R$ and work with a sequence $p(N)$ for which $\Lambda(N,p):=N^{1/3}[Np-1]\rightarrow \lambda$ as before. We will mostly suppress notational dependence on $p$ and $\lambda$.

\par
First, we establish the notation we will use to describe the sequence of rescaled component sizes in $F(N,p)$. Fix $T>0$, then:
\begin{itemize}
\item Let $\mathcal{C}^{N}:=(C_1^{N},C_2^{N},\ldots)$ be the sequence of sizes of components of $F(N,p)$, in non-increasing order.
\item Analogously, let $\C^{N,T}:=(C_1^{N,T},C_2^{N,T},\ldots)$ be the sequence of sizes of components of $F(N,p)$ which have non-empty intersection with $\{v_1,\ldots,v_{\fl{TN^{2/3}}}\}$, an initial segment of the breadth-first ordering introduced in Section \ref{subsec:intro-exploration}. That is, least one vertex has been seen by step $\fl{TN^{2/3}}$ of the exploration process. Again, we assume the sequence is ordered such that $C^{N,T}_1\ge C^{N,T}_2\ge\ldots$
\end{itemize}

We first show that for any $T<\infty$ the excursion lengths in the exploration processes on the interval $[0,T]$ appear correctly in the limit.

\par
In everything that follows, we work on the probability space $(\Omega,\F,\mathbb{P})$ whose existence is guaranteed by the Skorohod representation theorem, where $\tilde Z^{N,p} \stackrel{\mathbb{P}-\text{a.s.}}\rightarrow Z^\lambda$ with respect to the topology of uniform convergence on compact intervals.

\par
In a mild abuse of notation, let $C^T_1\ge C^T_2\ge\ldots$ be the lengths of excursions of $Z^\lambda$ above zero 
which have non-empty intersection with $[0,T]$, in non-increasing order. 
Set $\mathcal{C}^T:=(C^T_1,C^T_2,\ldots)$. We will prove the following convergence result 
for the components seen within the first $TN^{2/3}$ steps of the exploration process.

\begin{prop}\label{kcptsconv}Fix $T>0$ and $k\ge 1$. Then as $N\rightarrow\infty$,
\begin{equation}\label{eq:kcptsconv} N^{-2/3}(C_1^{N,T},C_2^{N,T},\ldots,C_k^{N,T}) \quad \stackrel{d}\rightarrow \quad (C_1^T,C_2^T,\ldots,C_k^T).  \end{equation} 
\end{prop}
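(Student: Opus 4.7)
By the Skorohod representation theorem applied to Theorem \ref{explconvtheorem}, we may work on a probability space on which $\tilde Z^{N,p} \to Z^\lambda$ almost surely, uniformly on each compact interval $[0, T']$. Excursions of $\tilde Z^{N,p}$ whose left endpoint lies in $[0, T]$ correspond bijectively to the components contributing to $\mathcal{C}^{N,T}$, with rescaled length equal to $N^{-2/3}$ times the component size; analogously, excursions of $Z^\lambda$ intersecting $[0, T]$ give the lengths $\mathcal{C}^T$. Almost surely only finitely many of the latter exceed any fixed $\eta > 0$, and the top $k$ of them are finite random variables. The plan is to establish a.s.\ convergence of the top $k$ rescaled excursion lengths on this Skorohod space, which suffices.

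The core step is an excursion-matching argument. Pick $\eta$ smaller than the $k$-th longest excursion of $Z^\lambda$ (a.s.\ positive) and a finite $T' > T$ exceeding the right endpoints of all excursions under consideration. For each excursion $(a, b)$ of $Z^\lambda$ intersecting $[0,T]$ of length at least $\eta$, $Z^\lambda$ is strictly positive on any compact subinterval, so by uniform convergence $\tilde Z^{N,p}$ is bounded away from zero there for large $N$. Hence $(a + \varepsilon, b - \varepsilon)$ sits inside a unique excursion interval $[\sigma^N, \tau^N]$ of $\tilde Z^{N,p}$ on $[0, T']$. The assignment $(a, b) \mapsto [\sigma^N, \tau^N]$ is not a priori injective: two distinct excursions of $Z^\lambda$ could land inside the same excursion of $\tilde Z^{N,p}$, if the discrete process only has a small positive local minimum between them (this is the merging scenario flagged in Section \ref{subsec:intro-exploration}). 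Once I have established injectivity, sending $\varepsilon \downarrow 0$ and using uniform convergence yields $\tau^N - \sigma^N \to b - a$, and the top $k$ excursion lengths converge a.s.\ as required.

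Ruling out merging is the main obstacle, and is where the structure of $F(N,p)$ as a uniform forest enters. Suppose for contradiction that, along some subsequence and with positive probability, a single excursion $[\sigma_N, \tau_N]$ of $\tilde Z^{N,p}$ with $[\sigma_N, \tau_N] \to [\sigma, \tau]$ and $\tau - \sigma > 0$ contains a point $t \in (\sigma, \tau)$ with $Z^\lambda(t) = 0$. The excursion $[\sigma_N, \tau_N]$ corresponds to the exploration of a single tree of $F(N,p)$ on some $n_N := \fl{(\tau_N - \sigma_N) N^{2/3}}$ vertices. Because every labelled tree on a given vertex set carries the same weight in $F(N,p)$ (they have the same number of edges), this tree is, conditional on its vertex set, uniformly distributed on labelled trees on that set. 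By \cite{LeGall05}, the breadth-first exploration of a uniform labelled tree on $n$ vertices, rescaled by $n^{-1/2}$ in height and $n^{-1}$ in time, converges in distribution to a standard Brownian excursion on $[0,1]$. Translating to the rescaling \eqref{eq:defntildeZ}, $\tilde Z^{N,p}|_{[\sigma_N, \tau_N]}$ converges in distribution (and, after a further subsequence and another Skorohod coupling, a.s.) to $\sqrt{\tau-\sigma}\, e\bigl((\cdot - \sigma)/(\tau-\sigma)\bigr)$ for a Brownian excursion $e$, which is strictly positive on $(\sigma, \tau)$. This contradicts $\tilde Z^{N,p}(t) \to Z^\lambda(t) = 0$ coming from uniform convergence, so no merging occurs among excursions of length $\ge \eta$.

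The hardest step is the last one, since it requires combining the almost sure uniform convergence of $\tilde Z^{N,p}$ to $Z^\lambda$ (coming from Skorohod applied to Theorem \ref{explconvtheorem}) with the distributional Brownian-excursion convergence of uniform trees. A careful subsequential coupling, conditioning on the vertex set of the offending tree and on $\tau_N - \sigma_N$ being bounded below by some positive constant (guaranteed by the excursion being a merger of length $\ge \eta$), is needed to turn these two modes of convergence into a clean contradiction.
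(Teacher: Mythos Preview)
Your approach has the right ingredients and correctly identifies the central difficulty: ruling out the ``merging'' scenario by exploiting that trees in $F(N,p)$ are conditionally uniform and hence their explorations approximate Brownian excursions. This is the same key idea the paper uses. However, the execution in your final paragraph has a genuine gap.

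You propose to combine the a.s.\ convergence $\tilde Z^{N,p}\to Z^\lambda$ (on the Skorohod space) with the distributional Brownian-excursion convergence via ``a further subsequence and another Skorohod coupling''. This cannot work directly: a second Skorohod representation lives on a \emph{different} probability space, where the first a.s.\ convergence is no longer available. And on the original space, conditioning on the merging event (which depends on $Z^\lambda$ through the coupling) destroys the conditional uniformity of the tree, so you cannot invoke the Brownian-excursion limit after that conditioning.

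The correct fix is to translate merging into a statement depending only on the \emph{marginal} law of $\tilde Z^{N,p}$. On the Skorohod space, if merging occurs then for every $\delta>0$, eventually some discrete excursion of length $\ge c>0$ has $\tilde Z^{N,p}$ dipping below $\delta$ at a point bounded away from its endpoints. This latter event involves only $\tilde Z^{N,p}$, so its probability can be bounded --- via the conditional uniform-tree property and the Brownian-excursion limit --- by a quantity tending to $0$ as $\delta\downarrow 0$. Hence merging has probability zero. This is precisely the paper's route: it introduces the marginal event $\chi^{N,T}(\delta,\epsilon,\gamma)$ in \eqref{eq:exchaslowpoint}, bounds $\limsup_N\Prob{\chi^{N,T}}$ by a Brownian-excursion infimum probability as in \eqref{eq:Pboundchi}, and sends $\delta\downarrow 0$.

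Two smaller points. First, injectivity among the \emph{large} limit excursions is not enough to conclude $\tau^N-\sigma^N\to b-a$: the discrete excursion containing $(a,b)$ might still swallow many \emph{small} limit excursions on either side. Your contradiction argument (``some interior $t$ with $Z^\lambda(t)=0$'') does in fact cover this, but the sentence claiming that injectivity alone yields the length convergence is not correct as stated. Second, the paper organises the argument as a stochastic sandwich in the weak-majorisation order, introducing the auxiliary lengths $C_i^{N,\delta,T}$ of excursions of $\tilde Z^{N,p}$ above level $\delta$: one direction of the sandwich is the easy one from uniform convergence (giving \eqref{eq:barCstocdom} and \eqref{eq:Csstochdom}), and the other comes from showing $|C_i^{N,T}-C_i^{N,\delta,T}|\le\epsilon N^{2/3}$ with high probability via the marginal event $\chi^{N,T}$, as in \eqref{eq:CivCdeltai}--\eqref{eq:LevyboundCnTd}. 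This framework cleanly separates the two halves and avoids delicate subsequential-limit arguments.
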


The concern is that the reflected exploration process might regularly approach zero without actually hitting zero, and thus starting a new component. To show that this effect does not appear in the limit, we use the fact that the components of $F(N,p)$ have the structure of uniform random trees. Then we can approximate the exploration process within a component by a Brownian excursion, and show that the probability of zeros in the limit which do not correspond to the start or end of a component is small.

\begin{definition}Given two sequences $a=(a_1,\ldots,a_k)$, $b=(b_1,\ldots,b_k)$, let $a^\downarrow,b^\downarrow$ denote the sequences rearranged into non-increasing order. Then, we say $a\succeq b$ or $a$ \emph{weakly majorises} $b$ if for every $\ell\le k$,
$$\sum_{i=1}^\ell a^\downarrow_i\ge \sum_{i=1}^\ell b^\downarrow_i.$$
It is easy to check that this gives a pre-order on $(\R\cup\{\infty\})^k$, and a partial order on non-increasing sequences finer than the standard ordering.
\end{definition}

We will prove Proposition \ref{kcptsconv} by stochastically sandwiching $\C^T$ between any weak limit of $\C^{N,T}$, and any weak limit of a related sequence of lengths $\C^{N,T,\delta}$ associated with $\tilde Z^N$, which will be defined shortly. This stochastic ordering will be with respect to weak majorisation. The two directions of this sandwiching argument occupy the next two sections. Finally, we show that for small enough $\delta$, these outer distributions are close in the sense of the L\'evy--Prohorov metric.

\subsection{Limits of component sizes stochastically majorise excursion lengths}

We show that limit points of $\C^{N,T}$ majorise $\C^T$, $\mathbb{P}$-almost surely.

\par
For any reference time $s\in[0,T]$, we define
$$\ell(s):=\sup\{t\le s : Z(t)=0\},\quad  \ell^N(s):=\sup\{t\le s : \tilde Z^N(t)=0\},$$
$$r(s):=\inf\{t\in [s,\infty) : Z(t)=0\},\quad r^N(s):=\beta(T)\wedge \inf\{t\in [s,r(T)] : \tilde Z^N(t)=0\},$$
so that $r(s)-\ell(s)$ is the width of the excursion of $Z$ around time $s$. It will be convenient to avoid values of $s$ where $\ell^N$ and $r^N$ are non-constant, so we define
$$\bar {\mathbb{Q}}:= \bigcup_{N\in \N} N^{-2/3}\Z.$$
We also define the event
$$\Psi^T:=\left\{\tilde Z^N\rightarrow Z\text{ uniformly on }[0,r(T)],\, Z\text{ continuous on }[0,r(T)]\right\}.$$
Since $r(T)<\infty$ almost surely, and $\tilde Z^N\rightarrow Z$ uniformly on compact intervals, we have $\Prob{\Psi^T}=1$. It follows easily that on $\Psi^T$,
\begin{equation}\label{eq:baralpha}\limsup_{N\rightarrow\infty}\ell^N(s)\le \ell(s),\quad  \liminf_{N\rightarrow\infty} r^N(s) \ge r(s), \quad\forall s\in[0,T].\end{equation}

Now, on $\Psi^T$, given $Z$, choose $s_1,\ldots,s_k\in[0,T]\backslash \bar{\mathbb{Q}}$ such that each $s_i$ lies in the $i$th longest excursion of $Z$, which has non-empty intersection with $[0,T]$. That is, $r(s_i)-\ell(s_i)=C^T_i$. Now consider any limit point
\begin{equation}\label{eq:limpointofcpts}(\bar\ell(s_1),\ldots,\bar\ell(s_k),\bar r(s_1),\ldots, \bar r(s_k), \bar C^T_1,\ldots,\bar C^T_k),\end{equation}
of $(\ell^N(s_1),\ldots,\ell^N(s_k), r^N(s_1),\ldots, r^N(s_k), C^{N,T}_1,\ldots, C^{N,T}_k)$, as $N\rightarrow\infty$, where we allow $\bar C_1^T$ and at most one of the $\bar r(s_i)$ to be $\infty$. By compactness, we can be sure that there are such limit points. To avoid introducing extra notation, we will assume that \eqref{eq:limpointofcpts} is a true limit, rather than a subsequential limit.

By \eqref{eq:baralpha}, for any $m\le k$,
$$\bigcup_{i=1}^m [\bar \ell(s_i),\bar r(s_i) ] \supseteq \bigcup_{i=1}^m [\ell(s_i),r(s_i)],$$
where the sets in the union on the right-hand side have disjoint interiors. By construction of $\ell^N(s_i), r^N(s_j)$, any pair of intervals $[\ell^N(s_i),r^N(s_i)]$ and $[\ell^N(s_j),r^N(s_j)]$ are either equal or disjoint. Therefore the intervals in the union on the left-hand side are either equal or have disjoint interiors. So for any limit point \eqref{eq:limpointofcpts}, let $\Gamma_m\subseteq [m]$ be some set of indices such that
$$[\bar\ell(s_i),\bar r(s_i)]\ne [\bar \ell(s_j),\bar r(s_j)],\quad \forall i\ne j\in\Gamma_m,\quad\text{and}\quad \bigcup_{i\in\Gamma_m} [\bar \ell(s_i),\bar r(s_i) ] \supseteq \bigcup_{i=1}^m [\ell(s_i),r(s_i)].$$
Furthermore, we may demand $\Gamma_1\subseteq \Gamma_2\subseteq \ldots\subseteq \Gamma_k$. Thus
$$\sum_{i\in\Gamma_m} (\bar r(s_i)-\bar \ell(s_i) ) \ge \sum_{i=1}^m (r(s_i)-\ell(s_i)).$$
That is,
\begin{equation}\label{eq:Gammalmaj}\left(\bar r(s_1)-\bar \ell(s_1),\ldots,\bar r\left(s_{|\Gamma_k|}\right) - \bar \ell\left(s_{|\Gamma_k|}\right),0,\ldots,0\right)\succeq \left(r(s_1)-\ell(s_1),\ldots,r(s_k)-\ell(s_k)\right).\end{equation}

For any $N$, and any $s\in[0,T]\backslash\bar{\mathbb{Q}}$, the interval $[\ell^N(s),r^N(s)]$ is associated via the reflected exploration process with exactly one component of $F(N,p)$. The size of this component is at least $(r^N(s)-\ell^N(s))N^{2/3}$.
\begin{note}The two cases where the size of the component is not equal to $(r^N(s)-\ell^N(s))N^{2/3}$ are: 1) when $r^N(s)=r(T)$; 2) when $\tilde Z^N(s)=0$. In the latter case, since we have excluded the possibility $s\in N^{-2/3}\Z$, it must hold that $\tilde Z^N$ is locally constant and equal to zero around $s$, so the component has size 1.\end{note}
For large enough $N$, the intervals $\{[\bar \ell^N(s_i), \bar r^N(s_i)]: i\in \Gamma_k\}$ are disjoint, and so
$$N^{-2/3}(C_1^{N,T},\ldots, C_k^{N,T}) \succeq \left( r^N(s_1)- \ell^N(s_1),\ldots, r^N\left(s_{|\Gamma_k|}\right) -  \ell^N\left(s_{|\Gamma_k|}\right),0,\ldots,0\right).$$
Since majorisation is preserved under limits (as the relation is a finite union of closed sets in $\R^k\times \R^k$), we obtain
$$(\bar C_1^T,\ldots, \bar C_k^T) \succeq \left(\bar r(s_1)-\bar \ell(s_1),\ldots,\bar r\left(s_{|\Gamma_k|}\right) - \bar \ell\left(s_{|\Gamma_k|}\right),0,\ldots,0\right).$$

So, combining with \eqref{eq:Gammalmaj}, we obtain
\begin{equation}\label{eq:barCstocdom}(\bar C_1^T,\ldots, \bar C_k^T)\succeq (C_1^T,\ldots,C_k^T),\end{equation}
which holds for every limit point $(\bar C_1^T,\ldots,\bar C_k^T)$ of $N^{-2/3}(C^{N,T}_1,\ldots,C^{N,T}_k)$ on the event $\Psi^T$ and so, in particular, $\mathbb{P}$-almost surely.

\subsection{Stochastic sandwiching via excursions above $\delta$}
We now bound $\C^T$ below stochastically (again in the sense of weak majorisation).

\par
Fix some $\delta>0$. For any realisation of the path $\tilde Z^N$, the set $\mathcal D^{N,\delta,T}:=\{s\in[0,T]:\tilde Z^N(s)>\delta\}$ is a finite union of left-closed, right-open intervals. Let $N^{-2/3}(C_1^{N,\delta,T}\ge\ldots\ge C_k^{N,\delta,T})$ be the sequence of the $k$ largest lengths of those intervals which are contained within the support of some excursion of $\tilde Z^N$ (above zero) which has non-empty intersection with $[0,T]$. As before, augment with zeros if necessary. (Note that the $N^{-2/3}$ ensures that $C^{N,\delta,T}_1$ has the same scaling as $\mathcal{C}^{N,T}$.) Certainly, for any $\delta$, $(C^{N,T}_1,\ldots,C^{N,T}_k)\succeq (C^{N,\delta,T}_1,\ldots,C_k^{N,\delta,T})$ for each trajectory of $\tilde Z^N$. We will show that $\C^{T}$ majorises limit points of $N^{-2/3}(C^{N,\delta,T}_1,\ldots,C^{N,\delta,T}_k)$, again $\mathbb{P}$-almost surely.

\par
Again, we work on the event $\Psi^T$. Then, consider $\mathcal{D}^T:=\{s\in[0,T]: Z(s)>0\}$, the collection of open intervals where the limit process $Z$ is positive. On $\Psi^T$, for large enough $N$, we have $\tilde Z^N(s)\le \delta/2$ whenever $Z(s)=0$, and so $\mathcal D^{N,\delta,T}\subseteq \mathcal{D}^T$. Therefore the sequence of all interval lengths in $\mathcal D^{N,\delta,T}$ in non-increasing order is majorised by the corresponding ordered sequence of interval lengths in $\mathcal{D}^T$. So in particular
\[
(C_1^T,\ldots,C_k^T)\succeq N^{-2/3}(C_1^{N,\delta,T},\ldots,C_k^{N,\delta,T}),
\]
for large enough $N$, and hence on $\Psi^T$ any limit point 
$(\bar C^{\delta,T}_1,\ldots,\bar C^{\delta,T}_k)$ 
of\\
$N^{-2/3}(C^{N,\delta, T}_1,\ldots,C^{N,\delta,T}_k)$ satisfies
$$(C_1^T,\ldots,C_k^T)\succeq (\bar C_1^{\delta,T},\ldots,\bar C_k^{\delta,T}).$$

\par
By \eqref{eq:cptsizeupbd}, the collection 
$N^{-2/3}(C^{N,T}_1,\ldots,C^{N,T}_k,C^{N,\delta,T}_1,\ldots,C^{N,\delta,T}_k)_{N\ge 1}$ 
is tight in $\R^k\times \R^k$. 
Let $(\bar C^T_1,\ldots,\bar C^T_k, \bar C^{\delta,T}_1,\ldots,\bar C^{\delta,T}_k)$ 
be any joint weak limit of 
$N^{-2/3}(C^{N,T}_1,\ldots,C^{N,T}_k,$
$C^{N,\delta,T}_1,\ldots,C^{N,\delta,T}_k)$. Since $\Prob{\Psi^T}=1$, by combining with \eqref{eq:barCstocdom} we have shown that
\begin{equation}\label{eq:Csstochdom} (\bar C^T_1,\ldots,\bar C^T_k) \succeq_{st} (C^T_1,\ldots,C^T_k) \succeq_{st} (\bar C^{\delta,T}_1,\ldots,\bar C^{\delta,T}_k).\end{equation}

\subsection{Comparing $C^{N,T}$ and $C^{N,\delta,T}$ via uniform trees}

We will now show for small $\delta$, any weak limits $(\bar C^T_1,\ldots,\bar C^T_k)$ and $(\bar C^{\delta,T}_1,\ldots,\bar C^{\delta,T}_k)$ are themselves close in distribution in the sense of the L\'evy--Prohorov metric on $\R^k$. To do this, we have to bound above the probability that the exploration process drops below height $\delta N^{1/3}$ in the middle of an excursion above zero of width $\Theta(N^{2/3})$. The components of $F(N,p)$ are, conditional on their sizes, uniform trees. In \cite{AldousCRT2}, Aldous explains how to view the uniform tree as an example of a Galton--Watson tree, here with Poisson offspring distribution, conditioned on its total progreny. From this, large excursions of $\tilde Z^N$ are well-approximated by Brownian excursions. We then can then bound the probability that $\tilde Z^N$ hits $\delta$ without hitting zero using standard estimates.

\par
Let $\mathcal{T}_K$ be a uniform choice from the $K^{K-2}$ unordered trees with vertex labels given by $[K]$. Then, let $1=S^{\mathcal T_K}_0,S^{\mathcal T_K}_1,\ldots,S^{\mathcal T_K}_K=0$, be the corresponding breadth-first exploration process. The appropriate rescaling to consider is then $\tilde S^{\mathcal T_K}(s):=\frac{1}{\sqrt{K}} S^{\mathcal T_K}_{\fl{Ks}}$, for $s\in[0,1]$. From the description of $\mathcal{T}_K$ as a conditioned Galton--Watson process, we follow Le Gall (see \cite{LeGall05} Lemma 1.16) in using Kaigh's scaling limit result for conditioned random walks \cite{Kaigh} to obtain
\begin{equation}\label{eq:USTconverges}\left(\tilde S^{\mathcal T_K}(s), s\in[0,1]\right)\; \stackrel{d}\rightarrow \; \left(B^{\mathrm{ex}}(s),s\in[0,1]\right),\end{equation}
where $B^{\mathrm{ex}}$ is a standard normalised Brownian excursion on $[0,1]$, and convergence is in the uniform topology.

\par
We say the event $\chi^{N,T}(\delta,\epsilon,\gamma)$ holds if $\exists M,K\in\Z_{\ge 0}$ with $\tfrac{K}{N^{2/3}}\ge \gamma$, and $\tfrac{M}{N^{2/3}}\le T$, such that $\{v_{M},\ldots,v_{M+K-1}\}$ is a component of $F(N,p)$, and 
\begin{equation}\label{eq:exchaslowpoint}\exists n \in[\epsilon K,(1-\epsilon) K]\text{ s.t. } \tilde Z^N\left(\tfrac{M+n}{N^{2/3}} \right) \le \delta.\end{equation}

That is, $F(N,p)$ has a component of size at least $\gamma N^{2/3}$ which is seen, at least partially, in the exploration process before time $TN^{2/3}$, and for which the exploration process takes a small value in the macroscopic interior of the interval defining the component. Now, given any $M,K$, and conditional on the vertices $\{v_{M},\ldots,v_{M+K-1}\}$, and the statement that they form a component, the structure of this component is a uniform tree. That is,
$$(Z^N_M,\ldots,Z^N_{M+K-1})\equidist (S^{\mathcal T_K}_1,\ldots, S^{\mathcal T_K}_K ).$$

Therefore the following processes on $s\in[0,1]$ can be identified in distribution:
$$\left(\tilde Z^N\left(\tfrac{M+sK}{N^{2/3}}\right)\right) = \left(N^{-1/3} Z^N_{\fl{M+sK}}\right) \equidist \left(N^{-1/3} S^{\mathcal T _K}_{\fl{sK}}\right) = \left(\tfrac{K^{1/2}}{N^{1/3}} \tilde S^{\mathcal T_K}(s)\right).$$

Therefore, for every $M,K$, conditional on any choice of vertices $\{v_M,\ldots,v_{M+K-1}\}$, the probability that \eqref{eq:exchaslowpoint} holds is equal to the probability that
\begin{equation}\label{eq:USThaslowpoint}\inf_{s\in[\epsilon,1-\epsilon]}\tilde S^{\mathcal T_K}(s)\le \tfrac{N^{1/3}}{K^{1/2}}\delta.\end{equation}
By assumption $\frac{N^{1/3}}{K^{1/2}}\delta\le \gamma^{-1/2}\delta$, and by \eqref{eq:USTconverges}, and the Portmanteau lemma,
\begin{align*}
\limsup_{K\rightarrow\infty}\Prob{\inf_{s\in[\epsilon,1-\epsilon]} \tilde S^{\mathcal T_K}(s)\le \gamma^{-1/2}\delta}&\le \limsup_{K\rightarrow\infty}\Prob{\inf_{s\in[\epsilon,1-\epsilon]} \tilde S^{\mathcal T_K}(s)< 2\gamma^{-1/2}\delta}\\
&\le\Prob{\min_{s\in[\epsilon,1-\epsilon]} B^{\mathrm{ex}}(s)< 2\gamma^{-1/2}\delta}.
\end{align*}

Therefore, we obtain
\begin{align}
\limsup_{N\rightarrow\infty} \Prob{\chi^{N,T}(\delta,\epsilon,\gamma)}&\le \E{\#\text{ cpts size }\ge \gamma N^{2/3} \text{ seen before }TN^{2/3}\text{ in } Z^N}\nonumber\\
&\quad \times \Prob{\min_{s\in[\epsilon,1-\epsilon]}B(s) <2\gamma^{-1/2}\delta}\nonumber\\
\limsup_{N\rightarrow\infty} \Prob{\chi^{N,T}(\delta,\epsilon,\gamma)}&\le \left(\tfrac{T}{\gamma}+1\right)\,\Prob{\min_{s\in[\epsilon,1-\epsilon]}B(s) <2\gamma^{-1/2}\delta}.\label{eq:Pboundchi}
\end{align}
Given $\epsilon,\gamma$, we can choose $\delta>0$ so that the RHS of \eqref{eq:Pboundchi} is arbitrarily small. Now, fix some $\gamma>2\epsilon$, and consider the event $\chi^{N,T}(\delta,\frac{\epsilon}{2\gamma},\epsilon)$. Then, when $\chi^{N,T}(\delta,\frac{\epsilon}{2\gamma},\epsilon)$, does not hold, for every component with size $K\ge \epsilon N^{2/3}$, there is a unique excursion of $Z^N$ above $\delta N^{1/3}$ of length at least $K(1-\frac{\epsilon}{\gamma})$. We call such an excursion above $\delta N^{1/3}$ a \emph{principal excursion}. If we also have $C^N_1\le \gamma N^{2/3}$, then the length of any principal excursion is at least $K-\epsilon N^{2/3}$. Thus, any other excursion above $\delta N^{1/3}$ within the component of size $K$, has length at most $\epsilon N^{2/3}$.

\par
So, consider any $i\le k$ such that $C^{N,T}_1\ge \ldots C^{N,T}_{i}\ge \epsilon N^{2/3}$. Then, on $\chi^{N,T}(\delta,\frac{\epsilon}{2\gamma},\epsilon)^c$ and $\{C^N_1\le \gamma N^{2/3}\}$, at most $i-1$ elements of $(C^{N,\delta,T}_1,\ldots,C^{N,\delta,T}_k)$ can be larger than $C_i^{N,T}$. These are the principal excursions obtained from each of $C^{N,T}_1,\ldots,C^{N,T}_{i-1}$. No other excursions above $\delta N^{2/3}$ obtained from $C^{N,T}_1,\ldots,C^{N,T}_{i-1}$ are relevant, since they have lengths at most $\epsilon N^{2/3}$. However, these principal excursions from $C^{N,T}_1,\ldots,C^{N,T}_{i}$ all have length at least $C_i^{N,T}(1-\frac{\epsilon}\gamma)$. Thus we obtain
\begin{equation}\label{eq:CivCdeltai}C^{N,T}_i\ge C_i^{N,\delta,T}\ge C_i^{N,T}(1-\tfrac{\epsilon}{\gamma})\ge C_i^{N,T}-\epsilon N^{2/3}.\end{equation}

And so
\begin{multline*}
\limsup_{N\rightarrow\infty} \Prob{\max_{i\in[k]}\left|C^{N,T}_i - C^{N,\delta,T}_i\right|>\epsilon N^{2/3}}
\\
\le \limsup_{N\rightarrow\infty}\Prob{C^N_1>\gamma N^{2/3}} + \limsup_{N\rightarrow\infty} \Prob{\chi^{N,T}(\delta,\tfrac{\epsilon}{2\gamma},\epsilon)}.
\end{multline*}
For fixed $\epsilon>0$, letting $\gamma\rightarrow\infty$ we can make the first term on the RHS small, and then by letting $\delta\downarrow 0$ we can make the second term small. In particular, we can demand
\begin{equation}\label{eq:LevyboundCnTd}\limsup_{N\rightarrow\infty} \Prob{\max_{i\in[k]}\left|C^{N,T}_i - C^{N,\delta,T}_i\right|>\epsilon N^{2/3}}\le \epsilon.\end{equation}

Now, recall $(\bar C^T_1,\ldots,\bar C^T_k,\bar C^{\delta,T}_1,\ldots,\bar C^{\delta,T}_k)$ is some joint weak limit of\\ $N^{-2/3}(C^{N,T}_1,\ldots,C^{N,T}_k,C^{N,\delta,T}_1,\ldots,C^{N,\delta,T}_k)$. Let $\pi$ be the usual L\'evy--Prohorov metric for probability measures on $\R^k$, with respect to the $\ell_\infty$ norm on $\R^k$. From \eqref{eq:Csstochdom} and \eqref{eq:LevyboundCnTd}, we have for each $\epsilon>0$,
$$\pi(\mathcal{L}(\bar C^T_1,\ldots,\bar C^T_k), \mathcal{L}(\bar C^{\delta,T}_1,\ldots,C^{\delta,T}_k))\le \epsilon,$$
$$(\bar C^{\delta,T}_1,\ldots, \bar C^{\delta,T}_k)\preceq (C^T_1,\ldots,C^T_k)\preceq (\bar C^{T}_1,\ldots,\bar C^T_k).$$
From this, it is easy to see that $\pi(\mathcal{L}(C^T_1,\ldots,C^T_k),\mathcal{L}(\bar C^T_1,\ldots,\bar C^T_k))\le k\epsilon$. Since $\epsilon>0$ is arbitrary, we find $(\bar C^T_1,\ldots,\bar C^T_k))\equidist (C^T_1,\ldots,C^T_k)$, and thus the required convergence in distibution \eqref{eq:kcptsconv} follows, completing the proof of Proposition \ref{kcptsconv}.
%\footnote{If $a\preceq b\preceq c$ and $|c_i-a_i|\le \epsilon$, then for each $\ell\le k$, we have $c_1+\ldots+c_\ell - \ell\epsilon \le b_1+\ldots+b_\ell \le c_1+\ldots+c_\ell$. Comparing this relation for $\ell-1$ and $\ell$, $c_\ell-\ell\epsilon \le b_\ell\le c_\ell + (\ell-1)\epsilon$.}

\subsection{Proof of Theorem \ref{FNptheorem}}

\subsubsection*{Convergence in the product topology}
In both the discrete exploration processes and the limiting SDEs, we would expect the $k$ largest components/excursions to appear early. From \eqref{eq:bigcptlateinexp}, 
$$\limsup_{T\rightarrow\infty}\limsup_{N\rightarrow\infty} \Prob{\max_{i\in[k]}\left|C^{N,T}_i - C^N_i\right|\le\epsilon N^{2/3}} = 1.$$

Recall again that $\lambda$ is fixed. By comparing the drifts, we can couple $Z^\lambda$ and $B^\lambda$, as defined in Proposition \ref{prop:AldousGNp} and Proposition \ref{Zlambdaexists}, such that $Z^\lambda(t)\le B^\lambda(t)$ for all $t\ge 0$. The largest excursion of $B^\lambda$ above zero is almost surely finite, and so the same holds for $Z^\lambda$. Thus, now turning to $(C^\lambda_1,C^\lambda_2,\ldots)$, the sequence of excursions lengths of $Z^\lambda$ in decreasing order,
$$\limsup_{T\rightarrow\infty} \Prob{(C^T_1,\ldots,C^T_k)=(C^\lambda_1,\ldots,C^\lambda_k)} = 1.$$
So we can lift \eqref{eq:kcptsconv} and conclude that
\begin{equation}\label{eq:kcptsconvunif}N^{-2/3}(C^N_1,\ldots,C^N_k) \quad \stackrel{d}\rightarrow \quad (C^\lambda_1,\ldots,C^\lambda_k),\end{equation}
as $N\rightarrow\infty$.

\subsubsection*{Convergence in $\ell^2$}

To lift \eqref{eq:kcptsconvunif} to convergence in $\ell^2$, 
it is enough to show that for every $\epsilon>0$
\begin{equation}\label{eq:l2tailbdF}\lim_{k\rightarrow\infty} \limsup_{N\rightarrow\infty} \Prob{\sum_{i>k} (C_i^N)^2 \ge \epsilon N^{4/3}}=0.\end{equation}

The corresponding result for $G(N,p)$, namely that
for every $\epsilon>0$, 
\begin{equation}\label{eq:l2tailbdG}\lim_{k\rightarrow\infty} \limsup_{N\rightarrow\infty} \Prob{\sum_{i>k} C_i(G(N,p))^2 \ge \epsilon N^{4/3}}=0,
\end{equation}
is implied by Proposition \ref{prop:AldousGNp}; 
the argument establishing this property for $G(N,p)$ 
is given in the Proof of Proposition 15 of 
\cite{Aldous97}. We can again use the coupling of $F(N,p)$ and $G(N,p)$ to derive \eqref{eq:l2tailbdF} from \eqref{eq:l2tailbdG}. We require the following lemma.

\begin{lemma}\label{l2taillemma}
Consider a family of finite non-increasing sequences
$$\mathbf{x}_1:=\left(x_{1,1},\ldots,x_{1,\ell_1}\right),\; \mathbf{x}_2:=\left(x_{2,1},\ldots,x_{2,\ell_2}\right),\;\ldots,\;\mathbf{x}_M:=\left(x_{M,1},\ldots,x_{M,\ell_M}\right)$$
with sums $x_1\ge x_2\ge \ldots\ge x_M$. Now let $y_1\ge y_2\ge \ldots \ge y_{\sum \ell_i}$ be the permutation of all the terms $x_{i,j}$ in descending order. Then, for any $k\ge 1$ and $\eta>0$,
\begin{equation}\label{eq:tailconcatbd}\sum_{i>k/\eta} y_i^2 \le \eta\sum_{i=1}^k x_i^2 + \sum_{i>k}x_i^2.\end{equation}
\begin{proof}[Proof of Lemma \ref{l2taillemma}]
Since each sequence $\mathbf{x}_i$ is non-increasing, set $m_i:= \max\left\{m \,:\, x_{i,m}\ge \eta x_i\right\}$ (with $\max\varnothing:=0$). So $m_i\le \frac{1}{\eta}$ for all $i$. It follows that for any $k\ge 1$,
$$\sum_{i=1}^k \sum_{j=1}^{m_i} x_{i,j}^2 \le \sum_{i=1}^{k/\eta} y_i^2.$$

Thus
$$\sum_{i>k/\eta} y_i^2 \le \sum_{i=1}^k \sum_{j\ge m_i} x_{i,j}^2 + \sum_{i>k}x_i^2.$$
But $\sum_{j\ge m_i}x_{i,j}^2 \le \sum_{j\ge m_i}x_{i,j}(\eta x_i)\le \eta x_i^2$, and so \eqref{eq:tailconcatbd} follows.
%overall we obtain
%$$\sum_{i>k/\eta} y_i^2 \le \eta\sum_{i=1}^k x_i^2 + \sum_{i>k}x_i^2.$$
\end{proof}
\end{lemma}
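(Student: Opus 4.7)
The plan is to split each sequence $\mathbf{x}_i$ into a ``large'' part (entries of size at least $\eta x_i$) and a ``small'' part (entries of size less than $\eta x_i$), with the threshold chosen so that the resulting bookkeeping lines up precisely with the two terms on the right-hand side of \eqref{eq:tailconcatbd}. The intuition is that the large part can contain only a few entries (at most $1/\eta$ per sequence), and hence globally at most $k/\eta$ entries when we restrict to $i\le k$; these are the entries that can possibly appear among the top $k/\eta$ of the $y_j$'s. Everything else contributes to the tail $\sum_{i>k/\eta} y_i^2$.

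First I would let $m_i$ denote the number of indices $j$ with $x_{i,j}\ge\eta x_i$. Since $\mathbf{x}_i$ is non-increasing, $x_i=\sum_j x_{i,j}\ge m_i\cdot\eta x_i$, which forces $m_i\le 1/\eta$. Hence $\sum_{i=1}^k m_i\le k/\eta$, so the multiset of large entries from $\mathbf{x}_1,\dots,\mathbf{x}_k$ consists of at most $k/\eta$ of the $y_j$'s. Because squaring preserves order on non-negative numbers, the sum of squares of any $\lfloor k/\eta\rfloor$ of the $y_j$'s is bounded by $\sum_{i\le k/\eta} y_i^2$, and therefore
\begin{equation*}
\sum_{i=1}^k\sum_{j\le m_i} x_{i,j}^2 \;\le\; \sum_{i\le k/\eta} y_i^2.
\end{equation*}

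Second, I would write the total sum of squares $\sum_j y_j^2=\sum_i\sum_j x_{i,j}^2$ and subtract the bound above from both sides to get
\begin{equation*}
\sum_{i>k/\eta} y_i^2 \;\le\; \sum_{i=1}^k \sum_{j>m_i} x_{i,j}^2 \;+\; \sum_{i>k}\sum_j x_{i,j}^2.
\end{equation*}
Finally, I would estimate each piece separately. For the first, every entry with $j>m_i$ satisfies $x_{i,j}<\eta x_i$, so $\sum_{j>m_i} x_{i,j}^2\le (\eta x_i)\sum_{j>m_i}x_{i,j}\le \eta x_i^2$, and summing over $i\le k$ gives $\eta\sum_{i=1}^k x_i^2$. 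For the second, $\sum_j x_{i,j}^2\le\bigl(\sum_j x_{i,j}\bigr)^2=x_i^2$ since the $x_{i,j}$ are non-negative, and summing over $i>k$ yields $\sum_{i>k}x_i^2$. The two contributions add up to exactly the right-hand side of \eqref{eq:tailconcatbd}. The only place where anything delicate happens is the counting step establishing $m_i\le 1/\eta$ and deducing that the squared-sum of large entries is dominated by $\sum_{i\le k/\eta}y_i^2$; once that is in place the rest is a pure $L^1$/$L^2$ interpolation.
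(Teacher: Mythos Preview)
Your argument is correct and follows essentially the same route as the paper: both define $m_i$ as the number of ``large'' entries (those at least $\eta x_i$), bound $m_i\le 1/\eta$, deduce that the large entries from the first $k$ sequences account for at most $k/\eta$ of the $y_j$'s, and then subtract from the total sum of squares. Your write-up is in fact a little more careful on two points---you justify $m_i\le 1/\eta$ explicitly via $x_i\ge m_i\cdot\eta x_i$, and you consistently use $j>m_i$ for the small part (the paper writes $j\ge m_i$, which is a minor slip since $x_{i,m_i}\ge\eta x_i$)---but the structure is identical.
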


\begin{proof}[Proof of \eqref{eq:l2tailbdF}]
Under the coupling of Lemma \ref{barGstochdom}, each component of $G(N,p)$ is the disjoint union of components of $F(N,p)$. So we apply Lemma \ref{l2taillemma} with $x_1\ge x_2\ge\ldots$ as the component sizes of $G(N,p)$, and $y_1\ge y_2\ge \ldots$ as the components of $F(N,p)$, obtaining, for every $\eta>0$,
\begin{align*}
\Prob{N^{-4/3}\sum_{i\ge k/\eta} (C_i^N)^2\ge \epsilon} &\le \Prob{N^{-4/3}\sum_{i\ge 1} C_i(G(N,p))^2 \ge \tfrac{\epsilon}{2\eta}}\\
&\qquad+ \Prob{N^{-4/3}\sum_{i\ge k} C_i(G(N,p))^2 \ge \tfrac{\epsilon}{2}}.
\end{align*}
So, using \eqref{eq:l2tailbdG} to eliminate the second term on the RHS, we have
\begin{align*}
\limsup_{k\rightarrow\infty}\limsup_{N\rightarrow\infty}\Prob{N^{-4/3}\sum_{i\ge k/\eta} (C_i^N)^2\ge \epsilon}&\le \limsup_{N\rightarrow\infty} \Prob{N^{-4/3}\sum_{i\ge 1} C_i(G(N,p))^2 \ge \tfrac{\epsilon}{2\eta}}\\
&\le \frac{2\eta}{\epsilon}\Theta^\lambda ,
\end{align*}
applying Lemma \ref{JansonSpencerprop} and Markov's inequality in the final step. Taking $\eta\rightarrow 0$ completes the proof of \eqref{eq:l2tailbdF}, and of Theorem \ref{FNptheorem}.
\end{proof}

%\section{Appendix}\label{finalsection}
\section{Detailed combinatorial calculations}
\label{section:combinatorial}
\subsection{Proof of Lemma \ref{acyclicboundlemma}}\label{acyclicboundproof}
For convenience, we recall the statement of Lemma \ref{acyclicboundlemma}:
\begin{lemma*}Fix $\lambda^-<\lambda^+\in\R$. Given $p\in(0,1)$, let $\Lambda=\Lambda(N,p)=N^{1/3}(Np-1)$. Then
\begin{equation}\label{eq:weightedFs2-repeat}\Prob{G(N,p)\text{ acyclic}}=(1+o(1))g(\Lambda)e^{3/4}\sqrt{2\pi} N^{-1/6} ,\end{equation}
uniformly for $\Lambda\in[\lambda^-,\lambda^+]$ as $N\rightarrow\infty$.

\begin{proof}For this range of $p$, we will see that the sum in \eqref{eq:weightedFs1} is dominated by contributions on the scale $m=\frac{N}{2}+\frac{\Lambda N^{2/3}}{2}+\Theta(N^{1/2})$. Shortly we will be required to approximate these relevant contributions in detail, but first we show that contributions from outside this regime vanish as $N\rightarrow\infty$. We consider those $m$ for which
$$\left| m - \frac{N}{2} - \frac{\Lambda N^{2/3}}{2}\right| \ge N^{3/5}.$$

Let $B\sim \Bin{\binom{N}{2}}{p}$. Since $f(N,m)\le \binom{\binom{N}{2}}{m}$,
$$(1-p)^{\binom{N}{2}}\left[\sum_{m=0}^{\ce{N/2+\Lambda N^{2/3}/2-N^{3/5}}} f(N,m)\left(\tfrac{p}{1-p}\right)^m + \sum_{\fl{N/2+\Lambda N^{2/3}/2+N^{3/5}}}^{N-1} f(N,m)\left(\tfrac{p}{1-p}\right)^m\right]$$
\begin{align}
&\le \Prob{\big|B - \tbinom{N}{2}p\big| \ge N^{3/5}}\nonumber\\
&\le \frac{\mathrm{Var}\left(B\right)}{N^{6/5}}\le \frac{N^2 p}{2N^{6/5}}\le \frac{1+\lambda^+ N^{-1/3}}{2N^{1/5}}\ll N^{-1/6}.\label{eq:Chebbound}
\end{align}
Here we used Chebyshev's inequality, which is sufficient for our purposes, but note that the probability of this moderate deviation event for $B$ decays exponentially in some positive power of $N$.

\par
Given $\Lambda\in\R$ and $m\le N\in\N$, define $x=x(N,m,\Lambda)=\frac{\sqrt 2}{N^{1/2}}\left[m-\frac{N}{2}-\frac{\Lambda N^{2/3}}{2}\right]$. Then, we consider the set of $m$ satisfying
\begin{equation}\label{eq:setofm}\left | m-\frac{N}{2}-\frac{\Lambda N^{2/3}}{2}\right| \le N^{3/5},\quad\text{that is, } |x| \le \sqrt{2} N^{1/10}.\end{equation}

Thus
$$N-m = \tfrac{N}{2} - \tfrac{\Lambda}{2}N^{2/3} - \tfrac{x}{\sqrt{2}} N^{1/2},\quad\text{ and so }\frac{2(N-m)}{N} = 1-\Lambda N^{-1/3}-\sqrt{2}x N^{-1/2}.$$
From this, we obtain
\begin{align*}\log\left(\frac{2(N-m)}{N}\right) &= -\Lambda N^{-1/3}-\sqrt{2}xN^{-1/2} - \tfrac{\Lambda^2}{2} N^{-2/3} - \sqrt{2}\Lambda xN^{-5/6}\\
&\qquad - \tfrac{\Lambda^3}{3}N^{-1} - x^2 N^{-1} + O(N^{-16/15}),\\
\intertext{uniformly on the set of $m$ defined at \eqref{eq:setofm}. In calculating the scale of this final error term, we use that $|x|\le \sqrt{2}N^{1/10}$. Then}
(N-m)\log\left(\frac{2(N-m)}{N}\right)& = -\left[  \tfrac{\Lambda}{2}N^{2/3}+\tfrac{x}{\sqrt{2}} N^{1/2} + \tfrac{\Lambda^2}{4}N^{1/3} + \tfrac{\Lambda x}{\sqrt{2}}N^{1/6}+ \tfrac{\Lambda^3}{3}+\tfrac{x^2}{2} \right]\\
&\quad  + \left[\tfrac{\Lambda^2}{2}N^{1/3}+\tfrac{\Lambda x}{\sqrt{2}} N^{1/6} + \tfrac{\Lambda^3}{4}\right]\\
&\quad + \left[\tfrac{\Lambda x}{\sqrt{2}} N^{1/6} + x^2\right] + O\left(N^{-1/15}\right)\\
&=-\tfrac{\Lambda}{2}N^{2/3} - \tfrac{x}{\sqrt{2}} N^{1/2}+ \tfrac{\Lambda^2}{4}N^{1/3} + \tfrac{\Lambda x}{\sqrt{2}} N^{1/6}\\
&\qquad -\tfrac{\Lambda^3}{12} + \tfrac{x^2}{2} + O\left(N^{-1/15}\right).
\end{align*}

We now return to \eqref{eq:Britikovresult} and use Stirling's approximation and the expression we have just shown, as well as continuity of $g$. Uniformly on the set of $m$ in \eqref{eq:setofm}, (for which, recall, $N-m=(1+o(1))N/2$),
\begin{align}
f(N,m) &= (1+o(1))\frac{\sqrt{2\pi}N^{N-1/6}}{2^{N-m}(N-m)!} g\left(\frac{2m-N}{N^{2/3}}\right),\nonumber\\
& = \left(1+o(1)\right)\frac{g(\Lambda)\sqrt{2\pi}N^{N-1/6}}{2^{N-m}} \cdot \frac{1}{\sqrt{2\pi}\sqrt{N-m}} \left(\frac{e}{N-m}\right)^{N-m}\nonumber\\
& = \left(1+o(1)\right)g(\Lambda)\sqrt{2}N^{m-2/3}\exp(N-m) \exp\left(-(N-m)\log\left( \tfrac{2(N-m)}{N} \right) \right)\nonumber\\
& = \left(1+o(1)\right)g(\Lambda)\sqrt{2}N^{m-2/3}\exp\left(\tfrac{N}{2}-\tfrac{\Lambda}{2}N^{2/3} - \tfrac{x}{\sqrt{2}} N^{1/2} \right)\nonumber\\
&\qquad\times \exp\left(\tfrac{\Lambda}{2}N^{2/3} + \tfrac{x}{\sqrt{2}} N^{1/2} - \tfrac{\Lambda^2}{4}N^{1/3} - \tfrac{\Lambda x}{\sqrt{2}} N^{1/6} +\tfrac{\Lambda^3}{12} - \tfrac{x^2}{2}\right)\nonumber\\
& = \left(1+o(1)\right) g(\Lambda)\sqrt{2} N^{m-2/3} \exp\left( \tfrac{N}{2}- \tfrac{\Lambda^2}{4}N^{1/3} - \tfrac{\Lambda x}{\sqrt{2}}N^{1/6} + \tfrac{\Lambda^3}{12}-\tfrac{x^2}{2} \right).\label{eq:asympfNm}
\end{align}

Now, we have
\begin{align}
\binom{N}{2}\log (1-p) &= \binom{N}{2}\left[ - \tfrac{1+\Lambda N^{-1/3}}{N} - \tfrac{1}{2}N^{-2} + O(N^{-7/3})\right]\nonumber\\
& = -\tfrac{N}{2} - \tfrac{\Lambda}{2}N^{2/3} + \tfrac{1}{4} + O\left(N^{-1/3}\right),\label{eq:NC2log1-p}\\
\intertext{and also}
\log\left(\frac{Np}{1-p}\right)& = \log(1+\Lambda N^{-1/3}) - \log(1-p)\nonumber\\
&= \Lambda N^{-1/3} - \tfrac{\Lambda^2}{2} N^{-2/3} + \tfrac{\Lambda^3}{3}N^{-1} + N^{-1}+O\left(N^{-4/3}\right).\nonumber\\
\intertext{At this point, recall the definition}
m&= \tfrac{N}{2}+\tfrac{\Lambda}{2}N^{2/3}+\tfrac{x}{\sqrt{2}}N^{1/2}.\nonumber\\
\intertext{So, uniformly on the  set of $m$ for which $|x|\le \sqrt{2}N^{1/10}$, as before,}
m\log\left(\frac{Np}{1-p}\right)& = \left[ \tfrac{\Lambda}{2}N^{2/3} - \tfrac{\Lambda^2}{4}N^{1/3} + \tfrac{\Lambda^3}{6}+\tfrac{1}{2}\right]\nonumber\\
&\quad + \left[ \tfrac{\Lambda^2}{2} N^{1/3} - \tfrac{\Lambda^3}{4}\right] + \tfrac{\Lambda x}{\sqrt{2}} N^{1/6} +O\left(N^{-1/6}\right),\label{eq:mlogNp/1-p}
\end{align}
where each bracket corresponds to a term in the definition of $m$.

\par
Therefore, combining \eqref{eq:NC2log1-p} and \eqref{eq:mlogNp/1-p}, uniformly in the same sense,
\begin{equation}\label{eq:asympBinbinom}(1-p)^{\binom{N}{2}}\left(\tfrac{p}{1-p}\right)^m = (1+o(1))N^{-m}\exp\left(-\tfrac{N}{2}+\tfrac{\Lambda^2}{4}N^{1/3}+\tfrac{\Lambda x}{\sqrt 2}N^{1/6} - \tfrac{\Lambda^3}{12}+ \tfrac{3}{4}\right).\end{equation}
Combining \eqref{eq:asympfNm} and \eqref{eq:asympBinbinom}, we obtain
\begin{equation}\label{eq:tobesummedoverm}(1-p)^{\binom{N}{2}}\left(\tfrac{p}{1-p}\right)^m f(N,m) = (1+o(1)) g(\Lambda)\sqrt{2}N^{-2/3}\exp\left(-\tfrac{x^2}{2} + \tfrac34\right).\end{equation}
We now fix $N$ and $\Lambda$, and sum this quantity over the range of $m$ given by \eqref{eq:setofm}. Recall that $x$ is linear in $m$, with scaling factor $\frac{N^{1/2}}{\sqrt{2}}$, and so as $N\rightarrow\infty$, the sum of \eqref{eq:tobesummedoverm} over this range of $m$ converges after rescaling to a integral. That is,

\begin{align*}
&(1-p)^{\binom{N}{2}}\sum_{m=\fl{N/2+\Lambda N^{2/3}/2 -N^{3/5}}}^{\ce{N/2+\Lambda N^{2/3}/2 +N^{3/5}}} f(N,m)\left(\tfrac{p}{1-p}\right)^m\\
&\qquad=(1+o(1))e^{3/4 }g(\Lambda)\sqrt{2} N^{-2/3} \sum_{m=\fl{N/2+\Lambda N^{2/3}/2 -N^{3/5}}}^{\ce{N/2+\Lambda N^{2/3}/2 +N^{3/5}}} e^{-x^2/2}\\
&\qquad=(1+o(1))e^{3/4} g(\Lambda)\sqrt{2} \frac{N^{-1/6}}{\sqrt 2} \int_{-\infty}^\infty e^{-x^2/2}\mathrm{d}x,\\
&\qquad=(1+o(1))e^{3/4 }g(\Lambda) \sqrt{2\pi}N^{-1/6}.
\end{align*}
Combining with \eqref{eq:Chebbound}, which showed that contributions to the sum \eqref{eq:weightedFs1} outside this range of $m$ are $o(N^{-1/6})$, we obtain the required result.
\end{proof}
\end{lemma*}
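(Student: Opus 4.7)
The plan is to apply Britikov's formula \eqref{eq:Britikovresult} for $f(N,m)$ inside the defining sum
\[
f(N,p) = (1-p)^{\binom{N}{2}}\sum_{m=0}^{N-1} f(N,m)\left(\tfrac{p}{1-p}\right)^m,
\]
together with careful Taylor expansions of $(1-p)^{\binom{N}{2}}$ and $(p/(1-p))^m$, and to show that the sum is dominated by an essentially Gaussian contribution near the ``typical'' number of edges.

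The first step is to localise the sum. The expected number of edges in $G(N,p)$ is $\binom{N}{2}p \approx N/2+\Lambda N^{2/3}/2$, with variance of order $N^2p/2 \sim N/2$. I would show that contributions from $m$ with $|m - N/2 - \Lambda N^{2/3}/2|\ge N^{3/5}$ are $o(N^{-1/6})$. This follows from the trivial bound $f(N,m)\le \binom{\binom{N}{2}}{m}$, so that the contribution from this range is bounded above by $\Prob{|B-\binom{N}{2}p|\ge N^{3/5}}$ with $B\sim \Bin{\binom{N}{2}}{p}$; by Chebyshev's inequality this is $O(N^{-1/5})$, which is $o(N^{-1/6})$.

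The core step is to parametrise the relevant range as $m=N/2+\Lambda N^{2/3}/2 + xN^{1/2}/\sqrt{2}$ for $|x|\le \sqrt{2}N^{1/10}$, and expand everything to sufficient order. Using Britikov's asymptotic, Stirling's formula applied to $(N-m)!$ (noting $N-m\sim N/2$), and uniform continuity of $g$ to replace $g((2m-N)/N^{2/3})=g(\Lambda+xN^{-1/6})$ by $g(\Lambda)(1+o(1))$, one gets an approximation of $f(N,m)$ involving $\exp\bigl((N-m)[1-\log(2(N-m)/N)]\bigr)$. The expansion of $(N-m)\log(2(N-m)/N)$ must be pushed to order $N^{-1/15}$ (fifth-order in $N^{-1/3}$), since the dominant terms in the exponent are of size $N^{2/3}$ and must be tracked down to $O(1)$. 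Similarly, one expands $\binom{N}{2}\log(1-p)$ to order $N^{-1/3}$ and $m\log(Np/(1-p))$ to order $N^{-1/6}$, picking up all contributions through the constant term. The expected miracle is that the large terms in the three expansions cancel pairwise, leaving only
\[
(1-p)^{\binom{N}{2}}\left(\tfrac{p}{1-p}\right)^m f(N,m) = (1+o(1))\, g(\Lambda)\sqrt{2}\, N^{-2/3}\exp\!\left(-\tfrac{x^2}{2}+\tfrac{3}{4}\right),
\]
uniformly over the window.

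The final step is to sum over $m$. Since $x$ is a linear function of $m$ with Jacobian $\mathrm{d}x/\mathrm{d}m=\sqrt{2}/N^{1/2}$, the sum over integer $m$ in the window converges to a Riemann sum for $\int_{-\infty}^{\infty}e^{-x^2/2}\mathrm{d}x=\sqrt{2\pi}$, multiplied by $N^{1/2}/\sqrt{2}$. Combining gives
\[
f(N,p)=(1+o(1))\, e^{3/4}g(\Lambda)\sqrt{2\pi}\, N^{-1/6},
\]
as claimed, with the truncation error absorbed via the first step.

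The main obstacle is purely technical: keeping track of all the subleading terms in the various Taylor expansions to ensure cancellations work exactly. Because the exponents contain several terms of order $N^{2/3}$, $N^{1/2}$, $N^{1/3}$ and $N^{1/6}$ which must cancel, each expansion must be carried one order further than feels necessary. The uniformity in $\Lambda\in[\lambda^-,\lambda^+]$ is automatic once all error terms are bounded in terms of $\lambda^\pm$ rather than $\Lambda$, but one must check that the uniform continuity of $g$ and the Riemann-sum approximation are also uniform on this compact range, which is straightforward since $g$ is uniformly continuous on $\mathbb{R}$.
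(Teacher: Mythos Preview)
Your proposal is correct and follows essentially the same approach as the paper's proof: the same localisation window $|m-N/2-\Lambda N^{2/3}/2|\le N^{3/5}$ handled via the trivial bound $f(N,m)\le\binom{\binom{N}{2}}{m}$ and Chebyshev, the same parametrisation $m=N/2+\Lambda N^{2/3}/2+xN^{1/2}/\sqrt{2}$, the same expansions carried to the same orders, and the same Riemann-sum conclusion yielding the Gaussian integral. You have correctly anticipated both the main technical difficulty (tracking cancellations among terms of orders $N^{2/3}$ down to $O(1)$) and the source of uniformity in $\Lambda$.
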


\subsection{Proof of Lemma \ref{Anrklemma}}\label{Anrklemmaproof}
We recall the statement of Lemma \ref{Anrklemma}:
\begin{lemma*}Fix constants $\lambda^-,\lambda^+,\epsilon,K,T$ as in Definition \ref{defnPsi}. Then,
\begin{equation}\label{eq:asymPAn'rk-repeat}
\Prob{G(N',p)\in\mathcal A_{N',r,k}} = (1+o(1)) g(\Lambda-s-a)e^{3/4}N^{-5/6} ba^{-3/2}\end{equation}
$$\qquad\qquad \times \exp\left(-b(\Lambda-s)-\tfrac{b^2}{2a}+\tfrac{(\Lambda-s-a)^3-(\Lambda-s)^3}{6}\right),$$
uniformly on $(N',p,r,k)\in\Psi^N(\lambda^-,\lambda^+,\epsilon,K,T)$, as $N\rightarrow\infty$.

\begin{proof}
We will add the required uniformity in $N'$ at the end of this proof. First, we show
\begin{equation}\label{eq:asymPAnrk}\Prob{G(N,p)\in\mathcal A_{N,r,k}} = (1+o(1)) g(\lambda-a)e^{3/4}N^{-5/6} ba^{-3/2}\end{equation}
$$\qquad\qquad \times \exp\left(-b\Lambda-\tfrac{b^2}{2a}+\tfrac{(\Lambda-a)^3-\Lambda^3}{6}\right),$$
uniformly on $(p,r,k)$ such that $(N,p,r,k)\in\Psi^N(\lambda^-,\lambda^+,\epsilon,K,0)$, as $N\rightarrow\infty$.

\par
Subject to the constraint that vertices $1,\ldots,r$ are in different tree components, with sum equal to $k$, there are $\binom{N-r}{k-r}$ ways to choose which remaining vertices are part of this stack forest. Given this choice, we can view the trees as rooted at the vertices $[r]$. In particular, Cayley's formula states that there are $rk^{k-r-1}$ such labelled rooted forests. Hence
\begin{equation}\label{eq:PANrk-variant}\Prob{G(N,p)\in\mathcal A_{N,r,k}} = (1-p)^{\binom{N}{2}}\tbinom{N-r}{k-r}\left(\tfrac{p}{1-p}\right)^{k-r} rk^{k-r-1} \sum_{m=0}^{N-k-1}f(N-k,m) \left(\tfrac{p}{1-p}\right)^m.\end{equation}
By Lemma \ref{acyclicboundlemma}, uniformly on $(p,k)$ and for any $r$ such that $(N,p,r,k)\in\Psi^N(\lambda^-,\lambda^+,\epsilon,K,0)$ (in fact $r$ is arbitrary),
$$(1-p)^{\binom{N-k}{2}} \sum_{m=0}^{N-k-1} f(N-k,m) \left(\tfrac{p}{1-p}\right)^m= (1+o(1)) g\left(\Lambda(N-k,p)\right)e^{3/4} \sqrt{2\pi} N^{-1/6} .$$
Recall that this final sum is, up to a power of $(1-p)$, the probability that $G(N-k,p)$ is acyclic. We also have
\begin{align*}
\Lambda(N-k,p)&=(N-aN^{2/3})^{1/3} \left[ (N-aN^{2/3})p - 1\right]\\
& = \left(1+o(1)\right)N^{1/3}\left[ (Np-1) - aN^{-1/3}\right]\\
& = \left(1+o(1)\right) \left[ \Lambda(N,p) - a + o(1)\right].
\end{align*}
So, again uniformly on $(p,r,k)$ such that $(N,p,r,k)\in\Psi^N(\lambda^-,\lambda^+,\epsilon,K,0)$,
\begin{equation}\label{eq:PGN-kacyclic}(1-p)^{\binom{N-k}{2}} \sum_{m=0}^{N-k-1} f(N-k,m) \left(\tfrac{p}{1-p}\right)^m= (1+o(1)) g\left(\Lambda-a\right)e^{3/4} \sqrt{2\pi} N^{-1/6}.\end{equation}

We now carefully address the other terms in \eqref{eq:PANrk-variant}, starting with $(1-p)^{\binom{N}{2}-\binom{N-k}{2}} \left(\tfrac{p}{1-p}\right)^{k-r}$. Recall that $Np=1+\Lambda N^{-1/3}$. Firstly
\begin{align*}
\log\left[ \left(1+\Lambda N^{-1/3}\right)^{k-r}\right]&= \left[aN^{2/3}-bN^{1/3}\right]\left[\Lambda N^{-1/3}-\tfrac{\Lambda^2}{2} N^{-2/3} + O\left(N^{-1}\right)\right]\\
&=\Lambda a N^{1/3}-\Lambda b-\tfrac{\Lambda^2 a}{2} + O\left(N^{-1/3}\right).
\end{align*}

Also
\begin{align*}
\tbinom{N}{2}-\tbinom{N-k}{2}-k+r&=\tfrac{N^2}{2} - \tfrac{(N-k)^2}{2} + \tfrac{k}{2} - k+r\\
&= Nk - \tfrac{k^2}{2} + O\left(N^{2/3}\right)\\
&=aN^{5/3}-\tfrac{a^2}{2}N^{4/3} + O\left(N^{2/3}\right),
\end{align*}
from which
\begin{multline*}
\log\left[(1-p)^{\tbinom{N}{2}-\tbinom{N-k}{2}-k+r}\right]
\\
\begin{split}
&=\log\left[ \left(1-N^{-1}-\Lambda N^{-4/3}\right)^{\tbinom{N}{2}-\tbinom{N-k}{2}-k+r}\right]\\
&=\left[aN^{5/3}-\tfrac{a^2}{2}N^{4/3}+O\left(N^{2/3}\right)\right]\left[-N^{-1} - \Lambda N^{-4/3} + O\left(N^{-2}\right)\right]\\
&= -aN^{2/3}-\Lambda a N^{1/3} + \tfrac{a^2}{2}N^{1/3}+\tfrac{\Lambda a^2}{2} + O\left( N^{-1/3}\right).
\end{split}
\end{multline*}

From this,
\begin{align}
&(1-p)^{\binom{N}{2}-\binom{N-k}{2}} \left(\tfrac{p}{1-p}\right)^{k-r}\nonumber\\
&\quad= (1+o(1))N^{-(k-r)}\exp\left(-aN^{2/3} + \tfrac{a^2}{2}N^{1/3} - \Lambda b +\tfrac{\Lambda a}{2}(a-\Lambda) \right).\label{eq:2ndestimate}
\end{align}

Turning now to the binomial coefficent $\binom{N-r}{k-r}$ in \eqref{eq:PANrk-variant}, we treat each factorial separately. First observe that
\begin{align*}
\log\left[\left(1-bN^{-2/3}\right)^{N-bN^{1/3}}\right] &= \left[N-bN^{1/3}\right]\left[-bN^{-2/3} +O\left(N^{-4/3}\right)\right]\\
&= -b N^{1/3}+O(N^{-1/3})\\
\log\left[\left(1-aN^{-1/3}\right)^{N-aN^{2/3}}\right] &=  -aN^{2/3}+\tfrac{a^2}{2}N^{1/3}+\tfrac{a^3}{6}+O\left(N^{-1/3}\right)\\%\left[N-aN^{2/3}\right]\left[-aN^{-1/3}-\tfrac{a^2}{2}N^{-2/3} - \tfrac{a^3}{3}N^{-1}+O\left(N^{-4/3}\right) \right]\\
\log\left[\left(1-\tfrac{b}{a}N^{-1/3}\right)^{aN^{2/3}-bN^{1/3}}\right]&= -bN^{1/3}+\tfrac{b^2}{2a}+O\left(N^{-1/3}\right).%\left[aN^{2/3}-bN^{1/3}\right]\left[-\tfrac{b}{a} N^{-1/3}-\tfrac{b^2}{2a^2}N^{-2/3} +O\left(N^{-1}\right)\right]\\
\end{align*}

Then Stirling's approximation gives
\begin{align*}
\left(N-bN^{1/3}\right)! & = \left(1+o(1)\right) \frac{\sqrt{2\pi N}}{e^{N-bN^{1/3}}}\left(N-bN^{1/3}\right)^{N-bN^{1/3}}\\
&= \left(1+o(1)\right) \frac{\sqrt{2\pi N}}{e^{N-bN^{1/3}}} N^{N-bN^{1/3}}\exp\left( -bN^{1/3} \right)\\
\left(N-aN^{2/3}\right)! &= \left(1+o(1)\right) \frac{\sqrt{2\pi N}}{e^{N-aN^{2/3}}} N^{N-aN^{2/3}}\exp\left( -aN^{2/3}+\tfrac{a^2}{2}N^{1/3}+\tfrac{a^3}{6} \right)\\%\left(1+o(1)\right) \frac{\sqrt{2\pi N}}{e^{N-aN^{2/3}}} \left(N-aN^{2/3}\right)^{N-aN^{2/3}}\\
\left(aN^{2/3}-bN^{1/3}\right)!&= \left(1+o(1)\right) \frac{\sqrt{2\pi}\sqrt{aN^{2/3}}}{e^{aN^{2/3}-bN^{1/3}}}a^{aN^{2/3}-bN^{1/3}}N^{\frac{2}{3}\left[aN^{2/3}-bN^{1/3}\right]}\exp\left(-bN^{1/3}+\tfrac{b^2}{2a}\right).\\%\left(1+o(1)\right) \frac{\sqrt{2\pi}\sqrt{aN^{2/3}}}{e^{aN^{2/3}-bN^{1/3}}} \left(aN^{2/3}-bN^{1/3}\right)^{aN^{2/3}-bN^{1/3}}\\
\end{align*}

So we obtain
\begin{align}
\binom{N-r}{k-r}&=(1+o(1))  \tfrac{1}{\sqrt {2\pi}}a^{-(aN^{2/3}-bN^{1/3}+1/2)}N^{\frac13(aN^{2/3}-bN^{1/3}-1)} \label{eq:N-rk-restimate}\\
&\quad\times \exp\left(aN^{2/3}-\tfrac{a^2}{2}N^{1/3}-\tfrac{b^2}{2a}-\tfrac{a^3}{6}\right). \nonumber
\end{align}

The final ingredient of \eqref{eq:PANrk-variant} is the term
\begin{equation}\label{eq:rk^k-r-1}rk^{k-r-1} = b a^{aN^{2/3}-bN^{1/3}-1} N^{\frac23\left[aN^{2/3}-bN^{1/3}\right] - \frac13}.\end{equation}
To recover \eqref{eq:PANrk-variant}, we study the product of \eqref{eq:PGN-kacyclic}, \eqref{eq:2ndestimate}, \eqref{eq:N-rk-restimate} and \eqref{eq:rk^k-r-1}. Note that\\$\exp\left(-\frac{\Lambda^2a}{2}+\frac{\Lambda a^2}{2}-\frac{a^3}{6}\right)=\exp\left( \frac{(\Lambda-a)^3-\Lambda^3}{6} \right)$. So we can treat the terms in \eqref{eq:PANrk-variant} uniformly on $(p,r,k)$ such that $(N,p,r,k)\in\Psi^N(\lambda^-,\lambda^+,\epsilon,K,0)$, as $N\rightarrow\infty$ and obtain \eqref{eq:asymPAnrk} as required.

\par
We now finish the proof of \eqref{eq:asymPAn'rk-repeat}, where in addition we require a uniform estimate over $N'\in[N-TN^{2/3},N]$. We consider $(N',p,r,k)\in\Psi^N(\lambda^-,\lambda^+,\epsilon,K,T)$ as $N\rightarrow\infty$. Observe that
\begin{equation}\label{eq:defnb'etc1}\Lambda':= \Lambda(N',p)=(1+o(1)) \left(\Lambda(N,p)-s\right),\qquad N'=(1+o(1))N,\end{equation}
\begin{equation}\label{eq:defnb'etc2}b':= b(N',r) = (1+o(1))b(N,r),\quad a'=a(N',k)=(1+o(1))a(N,k).\end{equation}
Now fix $\delta\in(0,\epsilon)$. Then, for large enough $N$,
\begin{align}
(N',p,r,k)&\in\Psi^N(\lambda^-,\lambda^+,\epsilon,K,T) \nonumber\\
\label{eq:linkN'N} \Rightarrow\quad (N',p,r,k)&\in \Psi^{N'}(\lambda^- -T-\delta,\lambda^+ + \delta, \epsilon-\delta,K+\delta,0).\end{align}
Certainly $N-TN^{2/3}\rightarrow\infty$ as $N\rightarrow\infty$, so by \eqref{eq:asymPAnrk} and \eqref{eq:linkN'N},
$$\Prob{G(N',p)\in\mathcal A_{N,r,k}} = (1+o(1))g(\Lambda'-a')e^{3/4}N'^{-5/6} b'a'^{-3/2}$$
$$\qquad\qquad \times \exp\left(-b'\Lambda'-\tfrac{b'^2}{2a'}+\tfrac{(\Lambda'-a')^3-\Lambda'^3}{6}\right),$$
uniformly on $(N',p,r,k)\in \Psi^N(\lambda^-,\lambda^+,\epsilon,K,T)$ as $N\rightarrow\infty$. Finally, using \eqref{eq:defnb'etc1}, \eqref{eq:defnb'etc2}, and the fact that $g$ is uniformly continuous, we may conclude
$$\Prob{G(N',p)\in\mathcal A_{N',r,k}} = (1+o(1)) g(\Lambda-s-a)e^{3/4}N^{-5/6} ba^{-3/2}$$
$$\qquad\qquad \times \exp\left(-b(\Lambda-s)-\tfrac{b^2}{2a}+\tfrac{(\Lambda-a-s)^3-(\Lambda-s)^3}{6}\right),$$
as required, uniformly on $(N',p,r,k)\in\Psi^N(\lambda^-,\lambda^+,\epsilon,K,T)$. 
\end{proof}
\end{lemma*}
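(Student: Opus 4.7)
My plan is to derive an exact combinatorial formula for $\Prob{G(N',p) \in \mathcal{A}_{N',r,k}}$, then estimate each factor asymptotically, using Lemma \ref{acyclicboundlemma} for the residual acyclic-graph factor and Stirling's approximation for the combinatorial factors. The exact formula comes from the decomposition: choose the $k-r$ extra vertices joining $[r]$ in their tree components ($\binom{N'-r}{k-r}$ ways), specify the stack forest as a rooted labelled forest on $k$ vertices with roots $[r]$ (by Cayley's formula, $rk^{k-r-1}$ choices), its $k-r$ present edges contributing $(p/(1-p))^{k-r}$, and demand that the remaining $N'-k$ vertices induce an acyclic subgraph while being joined to the stack forest by no edges. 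This gives
\[
\Prob{G(N',p)\in\mathcal{A}_{N',r,k}} = (1-p)^{\binom{N'}{2}-\binom{N'-k}{2}}\binom{N'-r}{k-r}\left(\tfrac{p}{1-p}\right)^{k-r}rk^{k-r-1}\, f(N'-k,p).
\]

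My strategy will be to first prove the asymptotic in the special case $s=0$ ($N'=N$), and then deduce the general uniform claim by a small perturbation using uniform continuity of $g$ and the fact that the rescaled variables $\Lambda',a',b'$ associated with $N'$ differ from $\Lambda-s,\,a,\,b$ by $o(1)$. For $s=0$, after plugging in $r=bN^{1/3}$, $k=aN^{2/3}$, $p=(1+\Lambda N^{-1/3})/N$, I will handle the four factors in turn. Lemma \ref{acyclicboundlemma} applied to $f(N-k,p)$ with parameter $\Lambda(N-k,p)=\Lambda-a+o(1)$ yields $(1+o(1))g(\Lambda-a)\,e^{3/4}\sqrt{2\pi}\,N^{-1/6}$. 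Expanding $\log\bigl[(1-p)^{\binom{N}{2}-\binom{N-k}{2}}(p/(1-p))^{k-r}\bigr]$ in powers of $N^{-1/3}$ down to $O(1)$ gives $N^{-(k-r)}\exp\!\bigl(-aN^{2/3}+\tfrac{a^2}{2}N^{1/3}-\Lambda b+\tfrac{\Lambda a(a-\Lambda)}{2}+o(1)\bigr)$. Applying Stirling to each of the three factorials in $\binom{N-r}{k-r}$ produces a main factor $a^{-3/2}N^{-1/3}\exp\!\bigl(aN^{2/3}-\tfrac{a^2}{2}N^{1/3}-\tfrac{b^2}{2a}-\tfrac{a^3}{6}\bigr)$ alongside a power $a^{k-r}N^{(k-r)/3}/\sqrt{2\pi}$, while $rk^{k-r-1}$ contributes $b\,a^{k-r-1}N^{2(k-r)/3-1/3}$.

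After multiplying everything, the $N^{-(k-r)}$ from the exponential factor cancels $N^{(k-r)/3}\cdot N^{2(k-r)/3}=N^{k-r}$, and the powers of $a$ collapse from $a^{-3/2}\cdot a^{k-r}\cdot a^{k-r-1}\cdot a^{-(k-r)}\cdot a^{-(k-r)}$ down to $a^{-3/2-1}$, which combines with the $b$ from the Cayley factor. The $\sqrt{2\pi}$ from Lemma \ref{acyclicboundlemma} cancels the $1/\sqrt{2\pi}$ from Stirling, and the algebraic identity $-\tfrac{\Lambda^2 a}{2}+\tfrac{\Lambda a^2}{2}-\tfrac{a^3}{6}=\tfrac{(\Lambda-a)^3-\Lambda^3}{6}$ collapses the surviving exponent into the form demanded by the lemma at $s=0$. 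For the general case $s>0$, I would note that for large $N$ any $(N',p,r,k)\in\Psi^N(\lambda^-,\lambda^+,\epsilon,K,T)$ sits inside a slightly enlarged $\Psi^{N'}(\lambda^--T-\delta,\lambda^++\delta,\epsilon-\delta,K+\delta,0)$, so the $s=0$ argument applies verbatim to $N'$ in place of $N$, producing the corresponding expression in terms of $(\Lambda',a',b')$; replacing these by $(\Lambda-s,a,b)$, via uniform continuity of $g$ and the exponential, finishes the proof.

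The main obstacle will be the bookkeeping in the Stirling and logarithmic expansions: contributions must be tracked so that the errors are $o(1)$ in the exponent \emph{uniformly} over $\Psi^N$, which requires expanding each logarithm to third order in $N^{-1/3}$ and Stirling's formula through the cubic $\tfrac{a^3}{6}$ term --- carelessly truncated expansions give errors of order $\Lambda^2 a \cdot N^{-1/3}$ or $a^3 \cdot N^{-1/3}$ that are \emph{not} $o(1)$. Working throughout at the level of logarithms, and absorbing every contribution of order smaller than $N^{-1/3}$ into a single uniform $o(1)$ at the very end, is the cleanest way to keep the uniformity under control; after that, the rest reduces to mechanical algebra.
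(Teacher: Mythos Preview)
Your approach is essentially identical to the paper's: the same exact combinatorial formula via Cayley's forest count, the same reduction to the $s=0$ case first, the same use of Lemma~\ref{acyclicboundlemma} for $f(N-k,p)$ with $\Lambda(N-k,p)=\Lambda-a+o(1)$, the same logarithmic/Stirling expansions of the four factors, the same cubic identity $-\tfrac{\Lambda^2 a}{2}+\tfrac{\Lambda a^2}{2}-\tfrac{a^3}{6}=\tfrac{(\Lambda-a)^3-\Lambda^3}{6}$, and the same passage to general $N'$ via the inclusion $\Psi^N\subset\Psi^{N'}$ with perturbed constants. One small bookkeeping slip to fix when you write it up: your accounting of the powers of $a$ is off --- the binomial contributes $a^{-(k-r)-1/2}$ (not $a^{-3/2}\cdot a^{k-r}$), so the product with $a^{k-r-1}$ from the Cayley factor collapses directly to $a^{-3/2}$, matching the target $b\,a^{-3/2}$ rather than the $a^{-5/2}$ your informal count suggests.
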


\subsection{Proof of Lemma \ref{geomratiolemma}}\label{geomratioproof}
We repeat the statement of Lemma \ref{geomratiolemma}:
\begin{lemma*}
Given the same constants as in Lemma \ref{Estacklemma}, there exist constants $M<\infty$ and $\gamma>0$ such that
\begin{equation}\label{eq:geomratiobd-repeat}
\frac{(k+1)\Prob{ G(N',p)\in \mathcal{A}_{N',r,k+1}}}{k\Prob{ G(N',p)\in \mathcal{A}_{N',r,k}}} \le 1- \gamma N^{-2/3},\end{equation}
for large enough $N$, whenever $(N',p,r)\in \bar \Psi_0^N(\lambda^-,\lambda^+,K,T)$ and $k\in[MN^{2/3},N'-1]$.
\begin{proof}

Again, we will use \eqref{eq:PANrk-variant}, which for convenience we recall here.
\begin{align*}
\Prob{G(N,p)\in\mathcal A_{N,r,k}} &= (1-p)^{\binom{N}{2}}\tbinom{N-r}{k-r}\left(\tfrac{p}{1-p}\right)^{k-r} rk^{k-r-1} \sum_{m=0}^{N-k-1}f(N-k,m) \left(\tfrac{p}{1-p}\right)^m\\
&=(1-p)^{\binom{N}{2} - \binom{N-k}{2}}\tbinom{N-r}{k-r}\left(\tfrac{p}{1-p}\right)^{k-r} rk^{k-r-1}F(N-k,p).
\end{align*}

We apply this to \eqref{eq:geomratiobd-repeat} (with $N$ replaced by $N'$). Note that $\binom{N'-r}{k+1-r}/\binom{N'-r}{k-r} = \frac{N'-k}{k+1-r}$, and $\binom{N'-k}{2}-\binom{N'-k-1}{2} = N'-k-1$. We obtain
\begin{align}
&\frac{(k+1)\Prob{ G(N',p)\in \mathcal{A}_{N',r,k+1}}}{k\Prob{ G(N',p)\in \mathcal{A}_{N',r,k}}} \nonumber\\
&\quad=\frac{(k+1)(1-p)^{-\binom{N'-k-1}{2}} \binom{N'-r}{k+1-r}\left(\frac{p}{1-p}\right)r (k+1)^{k-r}F(N'-k-1,p)}{k(1-p)^{-\binom{N'-k}{2}} \binom{N'-r}{k-r} r k^{k-r-1}F(N'-k,p)}\nonumber\\
&\quad =\frac{k+1}{k+1-r} \cdot (1-p)^{N'-k-2}\cdot \frac{N'-k}{N}\cdot\left(1+\Lambda N^{-1/3}\right)\cdot \left( \frac{k+1}{k}\right)^{k-r}\cdot \frac{F(N'-k-1,p)}{F(N'-k,p)}.\label{eq:keyratio}
\end{align}
We proceed in two parts. First we control the ratio of the $F(N'-k,p)$ terms using \eqref{eq:bdconsecutivefnm}. Then, we control the ratio of the remaining terms with an elementary but long Taylor expansion.

\par
First, note that from the second inequality in \eqref{eq:bdconsecutivefnm}, that for $k\le N'-1$,
$$1-\frac{F(N'-k,p)}{F(N'-k-1,p)}\le \frac12 (N'-k-1)p^2\E{|C^{N'-k-1,p}(v)|}.$$
where $|C^{N,p}(v)|$ is the size of the component containing a uniformly-chosen vertex $v$ in $G(N,p)$. Now, via \eqref{eq:defnb'etc1},
$$\limsup_{N\rightarrow\infty} \Lambda\left(N-\fl{MN^{2/3}},p\right)\le {\lambda^+}-M.$$
When $k\ge MN^{2/3}$, we have
$$N^{-1/3}\E{|C^{N'-k-1,p}(v)|}\le N^{-1/3}\E{|C^{N-\fl{MN^{2/3}},p}(v)|},$$
and so from \eqref{eq:barECvlimit},
$$\limsup_{N\rightarrow\infty}\sup_{\substack{N'\in[N-TN^{2/3},N]\\k\ge MN^{2/3}}}N^{-1/3}\E{|C^{N'-k-1,p}(v)|}\le \Theta^{\lambda^+-M}.$$
We obtain
$$\limsup_{N\rightarrow\infty} \sup_{\substack{N'\in[N-TN^{2/3},N]\\\lambda(N,p)\in[\lambda^-,\lambda^+]\\0\le k\le N'-1}} N^{2/3}\left[1- \frac{F(N'-k,p)}{F(N'-k-1,p)} \right] \le  \frac12 \Theta^{\lambda^+-M},$$
from which it follows that
\begin{equation}\label{eq:lowbdThetalambda}\limsup_{N\rightarrow\infty} \sup_{\substack{N'\in[N-TN^{2/3},N]\\\lambda(N,p)\in[\lambda^-,\lambda^+]\\0\le k\le N'-1}} N^{2/3}\left[ \frac{F(N'-k-1,p)}{F(N'-k,p)} - 1 \right] \le  \frac12 \Theta^{\lambda^+-M}.\end{equation}

\par
We now treat the remaining terms in the ratio \eqref{eq:keyratio}, that is
$$\frac{k+1}{k+1-r} \cdot (1-p)^{N'-k-2}\cdot \frac{N'-k}{N}\cdot\left(1+\lambda N^{-1/3}\right)\cdot \left( \frac{k+1}{k}\right)^{k-r}.$$

We split the calculation into several steps. Recall the rescalings $a=\frac{k}{N^{2/3}}$ and $b=\frac{r}{N^{1/3}}$. Since we assume $k\ge MN^{2/3}$, we have $\frac{1}{a}=O(1)$.
\begin{align*}
\log\left(\frac{k+1}{k+1-r}\right) &= -\log\left(1-\tfrac{r}{k+1}\right) = \tfrac{r}{k+1}+\tfrac12\left(\tfrac{r}{k+1}\right)^2 +O(N^{-1})\\
&= \tfrac{b}{a} N^{-1/3} + \tfrac{b^2}{2a^2} N^{-2/3}+O\left(N^{-1}\right),\\
\log\left(1+\Lambda N^{-1/3}\right) &= \Lambda N^{-1/3} - \tfrac{\Lambda^2}{2} N^{-2/3}+O(N^{-1}),\\
\log\left[\left( \frac{k+1}{k}\right)^{k-r}\right] &= \left[aN^{2/3}-bN^{1/3}\right]\left[\tfrac{1}{a}N^{-2/3} - \tfrac{1}{2a^2}N^{-4/3} + O\left(N^{-2}\right)\right]\\
&= 1 - \tfrac{b}{a}N^{-1/3} - \tfrac{1}{2a}N^{-2/3}+O\left(N^{-1}\right).
\intertext{The final two terms in the product require extra care, because there is no finite upper bound on $a$. However, since $a\le N^{1/3}$, we can still handle the error in the following term:}
\log \left[(1-p)^{N'-k-2}\right] &=\left[N-(s+a)N^{2/3}-2\right]\left[-N^{-1}-\Lambda N^{-4/3}+O\left(N^{-2}\right)\right]\\
&= -1 + (s-\Lambda+a) N^{-1/3} + \Lambda (a+s) N^{-2/3}+ O(N^{-1}).
\end{align*}
Finally, we have
$$\log\left(\frac{N'-k}{N}\right) = \log\left(1-sN^{-1/3}-aN^{-1/3}\right)\le -(a+s)N^{-1/3} - \tfrac12(a+s)^2 N^{-2/3}.$$
So there exists a constant $C=C(\lambda^-,\lambda^+,\epsilon,K,T)<\infty$ such that
\begin{align}
&\log\left[\frac{(k+1)(1-p)^{-\binom{N'-k-1}{2}} \binom{N'-r}{k+1-r}\left(\frac{p}{1-p}\right)r (k+1)^{k-r}}{k(1-p)^{-\binom{N'-k}{2}} \binom{N'-r}{k-r} r k^{k-r-1}}\right]\nonumber\\
\label{eq:giantlogcalc} &\quad\le N^{-2/3}\left[ -\frac12(\Lambda -(a+s))^2 + \frac{b^2}{2a^2} - \frac{1}{2a}\right] + \frac{C}{N},
\end{align}
uniformly on $(N',p,r)\in\Psi^N_0(\lambda^-,\lambda^+,\epsilon,K,T)$ and $k\ge MN^{2/3}$, as $N\rightarrow\infty$. Recall that $b\in [\epsilon,K]$, and that $k\ge MN^{2/3}$ is equivalent to $a\ge M$. So for large enough $M$, the term $\tfrac{b^2}{2a^2}$ is dominated by the term $-\frac{1}{2a}$ in \eqref{eq:giantlogcalc}. Then it holds that for large enough $N$,
$$\frac{(k+1)(1-p)^{-\binom{N'-k-1}{2}} \binom{N'-r}{k+1-r}\left(\frac{p}{1-p}\right)r (k+1)^{k-r}}{k(1-p)^{-\binom{N'-k}{2}} \binom{N'-r}{k-r} r k^{k-r-1}} \le 1-\frac{1}{3K} N^{-2/3}.$$
Using Lemma \ref{Thetalimit}, we now also demand that $M$ be large enough that $\Theta^{\lambda^+-M}\le\frac{1}{6K}$. So combining with \eqref{eq:lowbdThetalambda}, we can now approximate the LHS of \eqref{eq:geomratiobd-repeat} as required. Now take $\gamma\in(0,\frac{1}{6K})$, and we find that for large enough $N$
$$ \frac{(k+1)\Prob{ G(N,p)\in \mathcal{A}_{N',r,k+1}}}{k\Prob{ G(N,p)\in \mathcal{A}_{N',r,k}}} \le 1- \gamma N^{-2/3}.$$
\end{proof}
\end{lemma*}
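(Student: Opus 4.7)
The plan is to start from the explicit formula for $\Prob{G(N',p)\in\mathcal{A}_{N',r,k}}$ given in \eqref{eq:PAnrk}, namely
\[
\Prob{G(N',p)\in \mathcal{A}_{N',r,k}} = (1-p)^{\binom{N'}{2}-\binom{N'-k}{2}}\binom{N'-r}{k-r}\left(\tfrac{p}{1-p}\right)^{k-r} rk^{k-r-1} F(N'-k,p),
\]
and form the ratio in \eqref{eq:geomratiobd-repeat} directly. After the binomial identities $\binom{N'-r}{k+1-r}/\binom{N'-r}{k-r}=\frac{N'-k}{k+1-r}$ and $\binom{N'-k}{2}-\binom{N'-k-1}{2}=N'-k-1$, and writing $(N'-k)p=\frac{N'-k}{N}(1+\Lambda N^{-1/3})$, the ratio reduces to an explicit product of elementary terms times $F(N'-k-1,p)/F(N'-k,p)$.

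The second step is to separate the $F$-ratio from the rest. For the $F$-ratio, I would invoke Lemma \ref{fNvsfN+1lemma}, which gives $F(N'-k-1,p)/F(N'-k,p) \le 1 + \tfrac12(N'-k-1)p^2\,\E{|C_{G(N'-k-1,p)}(v)|}$. Since $k\ge MN^{2/3}$, the graph $G(N'-k-1,p)$ sits in the subcritical regime with parameter $\Lambda \le \lambda^+-M+o(1)$, so by Lemma \ref{JansonSpencerprop} and Lemma \ref{Thetalimit} the expectation is bounded by $N^{1/3}\Theta^{\lambda^+-M}(1+o(1))$, where $\Theta^{\lambda^+-M}\downarrow 0$ as $M\to\infty$. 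This yields
\[
\frac{F(N'-k-1,p)}{F(N'-k,p)} \le 1 + \tfrac12\Theta^{\lambda^+-M}N^{-2/3}(1+o(1)),
\]
uniformly in the relevant range, so the $F$-ratio contributes at most a small positive term at order $N^{-2/3}$ that can be made as small as we wish by choosing $M$ large.

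For the remaining elementary product, I would take logarithms and Taylor expand each factor to the required precision, using the rescalings $a=k/N^{2/3}$, $b=r/N^{1/3}$, $s=(N-N')/N^{2/3}$, $\Lambda=\Lambda(N,p)$. A careful but routine expansion of $\log\frac{k+1}{k+1-r}$, $\log(1+\Lambda N^{-1/3})$, $(k-r)\log(1+1/k)$, $(N'-k-2)\log(1-p)$, and $\log\frac{N'-k}{N}$ produces a total log-ratio of the form
\[
N^{-2/3}\Bigl[-\tfrac12(\Lambda-s-a)^2 + \tfrac{b^2}{2a^2} - \tfrac{1}{2a}\Bigr] + O(N^{-1}),
\]
as in \eqref{eq:giantlogcalc}. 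The crucial observation is that $-\tfrac12(\Lambda-s-a)^2 \le 0$ always; for $a\in[M,\infty)$ with $M$ large, the quadratic grows like $-a^2/2$ while the positive remainder $b^2/(2a^2)\le K^2/(2M^2)$ is dwarfed by $-1/(2a)$ (say for $M \ge K^2$). Thus for sufficiently large $M$ there exists $\gamma>0$ so that the bracket is $\le -2\gamma$ uniformly in the relevant range; combining with the $F$-ratio contribution (whose constant can be made smaller than $\gamma$ by further enlarging $M$, via Lemma \ref{Thetalimit}), the total log-ratio is at most $-\gamma N^{-2/3}$, whence \eqref{eq:geomratiobd-repeat} follows after exponentiating.

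The main obstacle is keeping the Taylor expansions uniform in $a$, since $a$ ranges over $[M, N^{1/3}]$ rather than a compact set; in particular the factors $(1+\Lambda N^{-1/3})$, $(1-p)^{N'-k-2}$, and $(1+1/k)^{k-r}$ must be expanded to fifth order to pick up the $O(N^{-2/3})$ term correctly, and one needs to verify that the remainder $C/N$ does not depend on $a$. Once this bookkeeping is done, the choice of $M$ depending only on $\lambda^\pm, K, T$ is straightforward.
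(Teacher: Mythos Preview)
Your proposal is correct and follows essentially the same approach as the paper's proof: both start from the explicit formula \eqref{eq:PAnrk}, factor the ratio into the $F$-ratio and an elementary product, bound the $F$-ratio via Lemma \ref{fNvsfN+1lemma} combined with Lemmas \ref{JansonSpencerprop} and \ref{Thetalimit}, and Taylor-expand the elementary product to obtain the expression in \eqref{eq:giantlogcalc}. One small correction: the expansions need only be taken to second order, not fifth, to capture the $N^{-2/3}$ coefficient; the uniformity in $a\in[M,N^{1/3}]$ is handled exactly as you anticipate, by noting that $-\tfrac12(\Lambda-s-a)^2$ dominates for large $a$.
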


\section{Regularity of $g$ and $\alpha$}\label{alpharegularity}

In this section, we prove various regularity properties of the function $g$ defined in \eqref{eq:defng}, and from this the technical properties we require about $\alpha$. In particular, the content of Lemma \ref{gregularitylemma} is a subset of what follows.

\subsection{Properties of $g$}
Recall the definition of $g$ from (\ref{eq:defng}):
\begin{equation*}
%\tag{\ref{eq:defng}}
g(x):=\frac{1}{\pi}\int_0^\infty \exp(-\tfrac{4}{3}t^{3/2})\cos(xt+\tfrac43 t^{3/2})dt.\end{equation*}
Britikov \cite{Britikov} observes that $g$ is, after- stretching by a factor $(2/3)^{2/3}$, the density of the canonical stable distribution with self-similarity exponent $\alpha=3/2$ and skewness $\beta=-1$. The following lemma, which restates the regularity properties of $g$ required for Lemma \ref{gregularitylemma}, follows from standard properties of such distributions, as stated, for example, by Zolotarev \cite{Zolotarevbook}.
\begin{lemma}\label{greglemma2}
The function $g$ defined in \eqref{eq:defng} is smooth and positive and has finite integral. Furthermore, it is bounded, uniformly continuous, and satisfies $g(x)\rightarrow 0$ as $x\rightarrow \pm\infty$. 
%\begin{proof}
%The key fact, which emerges from Britikov's proof \cite{Britikov} as introduced in Section \ref{forestsintro}, is that $g$ is, after- stretching by a factor $(2/3)^{2/3}$, the density of the canonical stable distribution with self-similarity exponent $\alpha=3/2$ and skewness $\beta=-1$. See for example Zolotarev's book \cite{Zolotarevbook} for a more general introduction to such distributions and their properties. In particular, $g$ is positive and smooth. Then $g$ is certainly bounded as
%$$|g(x)|\le \frac{1}{\pi}\int_0^\infty \exp\left(-\tfrac43 t^{3/2}\right) \mathrm{d}t<\infty.$$
%It is clear from the Mean Value Theorem that $|\cos(x)-\cos(y)|\le |x-y|$. Uniform continuity of $g$ then follows as
%$$|g(x)-g(y)| \le \frac{|x-y|}{\pi}\int_0^\infty t\exp\left(-\tfrac43 t^{3/2}\right) \mathrm{d}t,$$
%and this integral is finite. The claim that $g(x)\rightarrow 0$ as $x\rightarrow\pm\infty$ follows from uniform continuity, since $g$ is a density, and so has finite integral.
%\end{proof}
\end{lemma}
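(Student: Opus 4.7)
The plan is to exploit the identification noted in the remark preceding the statement: after a linear rescaling, $g$ is the density of the canonical stable law with self-similarity exponent $\alpha=3/2$ and skewness $\beta=-1$. To verify this identification, I would apply Fourier inversion to the characteristic function of the stable law in question (namely $\exp(-|t|^{3/2}(1 - i \operatorname{sign}(t)))$ in Zolotarev's form (A)) and, after a change of variables $t \mapsto ct$ for a suitable $c>0$, recover the cosine integral \eqref{eq:defng} by taking real parts and using the symmetry of the integrand. Once this identification is in hand, positivity (strictly, on all of $\mathbb{R}$), $C^\infty$-smoothness, and the finiteness of $\int_{\mathbb{R}} g$ (which equals $c^{-1}$ since the unscaled stable density is a probability density) are classical properties of stable laws with $1<\alpha<2$, as collected in \cite{Zolotarevbook}.

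The remaining analytic properties can be verified directly from \eqref{eq:defng}. Boundedness follows from the crude estimate
\[
|g(x)| \le \frac{1}{\pi}\int_0^\infty e^{-\frac{4}{3}t^{3/2}}\,dt < \infty,
\]
and applying the same estimate to the integrands $t^k e^{-\frac{4}{3}t^{3/2}}$ justifies differentiation under the integral sign to all orders, which recovers smoothness a second way and also shows that $g'$ is bounded on $\mathbb{R}$. Hence $g$ is Lipschitz, and in particular uniformly continuous.

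For the decay at $\pm\infty$, I would rewrite
\[
g(x) = \frac{1}{\pi}\operatorname{Re}\int_0^\infty e^{-ixt}\, e^{\left(-\frac{4}{3}+i\frac{4}{3}\right)t^{3/2}}\, dt
\]
and apply the Riemann--Lebesgue lemma to the function $t \mapsto \mathbf{1}_{\{t > 0\}}\, e^{(-\frac{4}{3}+i\frac{4}{3})t^{3/2}}$, which lies in $L^1(\mathbb{R})$. The one point that is not immediate from the oscillatory integral \eqref{eq:defng} is positivity, since the cosine certainly changes sign as $t$ varies; this is the main obstacle, and I would handle it by citing the standard result that stable densities with $1<\alpha<2$ are strictly positive on $\mathbb{R}$. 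A self-contained proof would require the series expansion of the stable density or a saddle-point analysis, both of which are tangential to the probabilistic arguments that form the core of the paper.
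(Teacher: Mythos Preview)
Your proposal is correct and follows the same route as the paper: both identify $g$ (after a linear rescaling) with the density of a stable law with index $3/2$ and cite \cite{Zolotarevbook} for positivity, smoothness, and integrability, which is really the only nontrivial ingredient. Your version is simply more detailed, supplying the direct dominated-convergence and Riemann--Lebesgue arguments for boundedness, uniform continuity, and decay; note a harmless sign slip in your displayed Fourier representation (it should read $e^{ixt}$, or equivalently $e^{-ixt}$ with the exponent $(-\tfrac{4}{3}-i\tfrac{4}{3})t^{3/2}$), which does not affect the conclusion.
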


\subsubsection{$\alpha$ is well-defined}

For $k\in \N$, we define
\begin{equation}\label{eq:defnJk}J_k(b,\lambda) := \int_0^\infty a^{-k/2}g(\lambda-a)\exp\left( \tfrac{(\lambda-a)^3}{6} \right)\exp\left(-\tfrac{b^2}{2a}\right)\mathrm{d}a,\quad b>0, \lambda\in\R.\end{equation}

\begin{lemma}\label{alphawelldefined}
For each $k\in \N$, this function $J_k$ is well-defined and continuous, and has partial derivative with respect to $b$ given by
\begin{equation}\label{eq:partialJk}\frac{\partial}{\partial b} J_k(b,\lambda)= -b J_{k+2}(b,\lambda).\end{equation}
Furthermore, the function $\alpha(b,\lambda):= \frac{J_1(b,\lambda)}{J_3(b,\lambda)}$ defined in \eqref{eq:defnalpha} is also well-defined, continuous and differentiable with respect to $b$.
\begin{proof}
To show that $J_k(b,\lambda)<\infty$, we consider the integral in \eqref{eq:defnJk} separately over the ranges $a\in(0,1]$ and $a\in[1,\infty)$. We have
\begin{equation}\label{eq:Jkfinite1}\int_1^\infty a^{-k/2}g(\lambda-a)\exp\left(\tfrac{(\lambda-a)^3}{6}\right) \exp\left(-\tfrac{b^2}{2a}\right)\mathrm{d}a < e^{\lambda^3/6}\int_1^\infty g(\lambda-a)\mathrm{d}a<\infty,\end{equation}
and
\begin{equation}\label{eq:Jkfinite2}\int_0^1 a^{-k/2}g(\lambda-a)\exp\left(\tfrac{(\lambda-a)^3}{6}\right)\exp\left(-\tfrac{b^2}{2a}\right)\mathrm{d}a< e^{\lambda^3/6}g_{\max}\int_0^1 a^{-k/2}\exp\left(-\tfrac{b^2}{2a}\right)\mathrm{d}a<\infty.\end{equation}
Thus we have $J_k(b,\lambda)<\infty$.

\par
Since the bounds \eqref{eq:Jkfinite1} and \eqref{eq:Jkfinite2} hold locally uniformly in $(b,\lambda)$, continuity of $J_k$ follows from the dominated convergence theorem.

\par
We can check that we may differentiate \eqref{eq:defnJk} inside the integral to obtain
that \eqref{eq:partialJk} holds for all $k\ge 1$.
%\begin{equation}\label{eq:partialJk}\frac{\partial}{\partial b} J_k(b,\lambda)= -b J_{k+2}(b,
%\lambda).\end{equation}
Well-definedness and continuity of $\alpha(b,\lambda):= \frac{J_1(b,\lambda)}{J_3(b,\lambda)}$ follow immediately, since $J_3(b,\lambda)>0$ for all $b>0,\lambda\in\R$, and furthermore $\alpha(b,\lambda)$ is differentiable in its first argument as required, with
\begin{equation}\label{eq:dalphaviaJks}\frac{\partial}{\partial b} \alpha(b,\lambda) = \frac{bJ_1(b,\lambda)J_5(b,\lambda)}{J_3(b,\lambda)^2} - b,\end{equation}
through two applications of \eqref{eq:partialJk}.
\end{proof}
\end{lemma}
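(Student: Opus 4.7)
The integrand of $J_k$ in \eqref{eq:defnJk} has two potentially problematic features: the singularity $a^{-k/2}$ at $a=0$, and the tail as $a\to\infty$. My plan is to split the integral at $a=1$ and control each piece with bounds that are locally uniform in $(b,\lambda)$, so that finiteness, continuity, and differentiation under the integral all follow from the dominated convergence theorem.

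For finiteness, on the tail $[1,\infty)$ I use $a^{-k/2}\le 1$ together with the bound $(\lambda-a)^3\le\lambda^3$ (valid for all $a\ge 0$ since $x\mapsto x^3$ is monotone on $\R$), which gives $\exp((\lambda-a)^3/6)\le\exp(\lambda^3/6)$; since $g$ is integrable over $\R$ by Lemma \ref{greglemma2}, the tail contribution is at most $\exp(\lambda^3/6)\int_\R g(x)\,dx$. On $(0,1]$, I bound $g(\lambda-a)$ by $\sup_\R g$ and $\exp((\lambda-a)^3/6)$ by its maximum for $a\in[0,1]$; what remains is $\int_0^1 a^{-k/2}\exp(-b^2/(2a))\,da$, which is finite because $\exp(-b^2/(2a))$ decays faster than any negative power of $a$ as $a\downarrow 0$ for fixed $b>0$.

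These bounds can be made locally uniform in $(b,\lambda)$ on compact subsets of $(0,\infty)\times\R$ (with the small-$a$ estimate requiring a positive lower bound on $b$), so dominated convergence yields continuity of $J_k$. Differentiating the integrand in $b$ multiplies it by $-b/a$, and after absorbing $a^{-1}$ into the exponent of $a$ one recovers exactly the integrand of $-bJ_{k+2}(b,\lambda)$; the same locally uniform bounds applied at index $k+2$ justify the interchange of $\partial_b$ and $\int$, giving \eqref{eq:partialJk}. For $\alpha$, strict positivity of $g$ (Lemma \ref{greglemma2}) makes the integrand of $J_3$ strictly positive on $(0,\infty)$, hence $J_3(b,\lambda)>0$ everywhere, so $\alpha=J_1/J_3$ is well-defined and continuous; the quotient rule combined with \eqref{eq:partialJk} applied to $J_1$ and $J_3$ then yields \eqref{eq:dalphaviaJks}.

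No step is especially deep. The only point that needs a little care is that the small-$a$ dominating function depends on $b$ through the factor $\sup_{a\in(0,1]}a^{-k/2}\exp(-b^2/(2a))$, so the bounds are only locally uniform once $b$ is bounded away from $0$. This is enough for the statement, which concerns $(b,\lambda)\in(0,\infty)\times\R$, and it also explains why the claim here does not (and cannot) extend to continuity at $b=0$; the behaviour of $\alpha$ as $b\downarrow 0$ is the content of the separate assertion in Lemma \ref{gregularitylemma}.
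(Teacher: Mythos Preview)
Your proof is correct and follows essentially the same approach as the paper: splitting the integral at $a=1$, using $a^{-k/2}\le 1$ and $(\lambda-a)^3\le\lambda^3$ together with integrability of $g$ on the tail, and using boundedness of $g$ plus the fast decay of $\exp(-b^2/(2a))$ near $a=0$, with continuity and differentiation handled by dominated convergence and the quotient rule for $\alpha$. Your remark that the small-$a$ dominating function requires $b$ bounded away from $0$ (so continuity at $b=0$ is not covered here) is a helpful clarification that the paper leaves implicit.
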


\subsection{Monotonicity of $\alpha$}

Heuristically, we can view \eqref{eq:defnalpha} as the expectation of $a$ with respect to the measure with density $a^{-3/2}g(\lambda-a)$, weighted by a factor $\exp(-\frac{b^2}{2a})$. Increasing $b$ reweights in favour of larger values of $a$, so $\alpha(b,\lambda)$ is increasing in $b$. We make this formal with the following straightforward lemma.

\begin{lemma}\label{incrdomlemma}Let $f,h$ be functions $\R_+\rightarrow \R_+$ such that $h$ is strictly increasing, and the integrals
$$\int_0^\infty a f(a) h(a)\mathrm{d}a,\quad \int_0^\infty f(a)\mathrm{d}a,$$
exist and are finite. Then
$$\frac{\int_0^\infty af(a)h(a)\mathrm{d}a}{\int_0^\infty f(a)h(a)\mathrm{d}a} > \frac{\int_0^\infty af(a)\mathrm{d}a}{\int_0^\infty f(a)\mathrm{d}a}.$$
\end{lemma}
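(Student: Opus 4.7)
The plan is to prove this by the classical Chebyshev correlation argument. Consider the symmetric double integral
\begin{equation*}
I := \int_0^\infty\!\int_0^\infty \bigl(h(a)-h(b)\bigr)(a-b)\,f(a)f(b)\,\mathrm{d}a\,\mathrm{d}b.
\end{equation*}
Since $h$ is strictly increasing, the factor $(h(a)-h(b))(a-b)$ is pointwise non-negative (and strictly positive whenever $a\neq b$), so the integrand is non-negative and hence $I\ge 0$. Note first that all four integrals we will need are finite: the existence of $\int af(a)h(a)\,\mathrm{d}a$ together with monotonicity of $h$ (applied by splitting into $[0,1]$ and $[1,\infty)$) yields finiteness of $\int f(a)h(a)\,\mathrm{d}a$ and $\int af(a)\,\mathrm{d}a$ as well.

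Expanding the product inside $I$ by Fubini and using symmetry in $(a,b)$, the cross terms collect into the identity
\begin{equation*}
I \;=\; 2\left[\int_0^\infty af(a)h(a)\,\mathrm{d}a\right]\!\left[\int_0^\infty f(b)\,\mathrm{d}b\right] \;-\; 2\left[\int_0^\infty af(a)\,\mathrm{d}a\right]\!\left[\int_0^\infty f(b)h(b)\,\mathrm{d}b\right].
\end{equation*}
Since $\int f(a)h(a)\,\mathrm{d}a$ and $\int f(a)\,\mathrm{d}a$ are strictly positive (as they are applied in the statement to non-trivially supported $f$), dividing through by their product and using $I\ge 0$ immediately produces the desired inequality in its non-strict form.

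For the strict inequality, observe that the integrand $(h(a)-h(b))(a-b)f(a)f(b)$ is strictly positive on the set $(\{f>0\}\times\{f>0\})\setminus\{a=b\}$, because $h$ is \emph{strictly} increasing. Whenever $f$ is not supported on a single point (which is the only regime where the statement is non-vacuous, since both numerator and denominator of each ratio must be finite and nonzero), the set $\{f>0\}$ has positive Lebesgue measure, so its product with itself minus the diagonal has positive planar Lebesgue measure, forcing $I>0$. This yields the strict inequality.

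The main obstacle, such as it is, lies only in the strict version: one must rule out the degenerate case where $f$ is concentrated at a single point, which would make $I=0$ and turn the inequality into an equality. In the applications (for instance to the ratio of $J_k$'s appearing in $\alpha(b,\lambda)$), the weight function is a genuine positive density on an interval of positive length, so the strict conclusion applies without further comment.
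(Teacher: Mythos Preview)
Your argument is correct and is the standard Chebyshev correlation proof of this inequality. The paper itself does not give a proof of this lemma at all; it simply calls it a ``straightforward lemma'' and moves directly to the corollary, so there is nothing to compare against beyond confirming your approach is valid.

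Two small remarks on presentation. First, your finiteness step is fine but relies implicitly on $h(1)>0$, which follows because $h$ is non-negative and strictly increasing (so $h(1)>h(0)\ge 0$); it might be worth saying this explicitly. Second, the phrase ``not supported on a single point'' is a slightly awkward way to describe the non-degeneracy needed for strictness: what you actually use (and what suffices) is that $\{f>0\}$ has positive Lebesgue measure, which is automatic once $\int f>0$, and then the diagonal in the product has zero planar measure. With that wording tightened, the proof is complete.
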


\begin{corollary}$\alpha(b,\lambda)$ is increasing as a function of $b$.
\begin{proof}Fix $\lambda\in\R$ and $b'>b$, then set
$$f(a):= a^{-3/2} g(\lambda-a) \exp\left( \tfrac{(\lambda-a)^3}{6} \right)\exp\left(-\tfrac{b^2}{2a}\right),\text{ and } h(a):= \exp\left( - \tfrac{b'^2-b^2}{2a}\right),$$
in Lemma \ref{incrdomlemma}.
\end{proof}
\end{corollary}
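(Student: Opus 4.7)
The plan is to apply Lemma \ref{incrdomlemma} essentially verbatim, once we put $\alpha(b,\lambda)$ into the form of a ratio $\frac{\int a \mu\, da}{\int \mu\, da}$ for an appropriate measure $\mu$. Writing $a^{-1/2} = a \cdot a^{-3/2}$, we see that
\[
\alpha(b,\lambda) = \frac{\int_0^\infty a \cdot \mu_b(a)\, da}{\int_0^\infty \mu_b(a)\, da},
\quad \text{where } \mu_b(a) := a^{-3/2} g(\lambda - a) \exp\!\left(\tfrac{(\lambda-a)^3}{6}\right) \exp\!\left(-\tfrac{b^2}{2a}\right).
\]
So $\alpha(b,\lambda)$ is precisely the first moment of $a$ under the (unnormalised) measure $\mu_b$.

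To compare $\alpha(b,\lambda)$ with $\alpha(b',\lambda)$ for $b' > b > 0$, I would observe that $\mu_{b'}(a) = \mu_b(a) \cdot h(a)$, where $h(a) := \exp\!\left(-\tfrac{b'^2 - b^2}{2a}\right)$. Since $b'^2 - b^2 > 0$, the function $h$ has the form $\exp(-c/a)$ with $c > 0$, which is strictly increasing on $(0,\infty)$. Taking $f = \mu_b$ and this $h$ in Lemma \ref{incrdomlemma} then yields directly
\[
\alpha(b',\lambda) = \frac{\int_0^\infty a f(a) h(a)\, da}{\int_0^\infty f(a) h(a)\, da} > \frac{\int_0^\infty a f(a)\, da}{\int_0^\infty f(a)\, da} = \alpha(b,\lambda).
\]

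The only hypothesis of Lemma \ref{incrdomlemma} requiring verification is finiteness of the two integrals $\int_0^\infty f(a)\, da$ and $\int_0^\infty a f(a) h(a)\, da$. But these are exactly $J_3(b,\lambda)$ and $J_1(b',\lambda)$ respectively, both of which were shown to be finite in the proof of Lemma \ref{alphawelldefined}. Since this reduces the corollary to a one-line invocation of an already-proved lemma via a natural reweighting, there is no real obstacle; the only subtlety is noticing the correct factorisation $\mu_{b'} = \mu_b \cdot h$ with $h$ monotone, which is immediate once we isolate the $b$-dependence in the Gaussian-type factor $\exp(-b^2/2a)$.
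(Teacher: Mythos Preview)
Your proof is correct and follows exactly the same approach as the paper: you choose the same $f$ (which you call $\mu_b$) and the same $h$, and apply Lemma \ref{incrdomlemma} in precisely the intended way. You have in fact been more explicit than the paper by spelling out why $h$ is strictly increasing and by identifying the finiteness hypotheses with $J_3(b,\lambda)$ and $J_1(b',\lambda)$.
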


\subsection{Lipschitz property of $\alpha$}
The following proposition establishes the behaviour of $\alpha(b,\lambda)$ as $b\downarrow 0$ in the sense required to complete the proof of Lemma \ref{Estacklemma}. It also establishes a Lipschitz condition for $\alpha$, required in Proposition \ref{Zlambdaexists} for the well-posedness of the reflected SDE \eqref{eq:ZSDE}.
\begin{prop}\label{alphaLipprop}Given $-\infty<\lambda^-<\lambda^+<\infty$, we have
\begin{equation}\label{eq:alphaasbtozero}\lim_{b\downarrow 0}\sup_{\lambda\in[\lambda^-,\lambda^+]} \alpha(b,\lambda)=0.\end{equation}
Furthermore, given $\rho<\infty$, there exists a constant $C<\infty$ such that $\alpha$ satisfies the Lipschitz condition
\begin{equation}\label{eq:alphalocLip} \left| \alpha(b,\lambda) - \alpha(b',\lambda) \right| \le C |b-b'|,\quad b,b'\in(0,\rho],\, \lambda\in[\lambda^-,\lambda^+].\end{equation}

\begin{proof}
To show \eqref{eq:alphalocLip}, it suffices to prove the following:
\begin{equation}\label{eq:bdalphapartial}\sup_{b\in(0,\rho], \lambda\in[\lambda^-,\lambda^+ ]}\left|\frac{\partial}{\partial b}\alpha(b,\lambda)\right|<\infty.\end{equation}

The steps we take to prove \eqref{eq:alphalocLip} will also allow us to read off \eqref{eq:alphaasbtozero}. Recall the expression \eqref{eq:dalphaviaJks} from the proof of Lemma \ref{alphawelldefined}:
\begin{equation}\tag{\ref{eq:dalphaviaJks}}\frac{\partial}{\partial b} \alpha(b,\lambda) = \frac{bJ_1(b,\lambda)J_5(b,\lambda)}{J_3(b,\lambda)^2} - b.\end{equation}
From Lemma \ref{alphawelldefined}, we know that $\frac{\partial}{\partial b} \alpha(b,\lambda)$ is continuous, and so to verify \eqref{eq:bdalphapartial}, it remains to consider the limit as $b\downarrow 0$. We examine the behaviour of each of $J_1(b,\lambda),J_3(b,\lambda),J_5(b,\lambda)$ in this limit.

\par
First, we consider $J_1$. We define
$$\gamma_1(\lambda):= e^{\lambda^3/6}\int_0^\infty a^{-1/2} g(\lambda-a) \mathrm{d}a,$$
which is seen to be finite by a similar decomposition to \eqref{eq:Jkfinite1} and \eqref{eq:Jkfinite2}. Then
\begin{align*}
\gamma_1(\lambda) - J_1(b,\lambda) &\le g_{\max}\int_0^\infty a^{-1/2} \left[e^{\lambda^3/6}-\exp\left(\tfrac{(\lambda-a)^3}{6}\right)\exp\left(-\tfrac{b^2}{2a}\right)\right] \mathrm{d}a\\
&\le e^{\lambda^3/6} g_{\max}\int_0^\infty a^{-1/2}\left[1-\exp\left(-\tfrac{b^2}{2a}\right) \right] \mathrm{d}a,
\end{align*}
and so by monotone convergence we have as $b\downarrow 0$,
\begin{equation}\label{eq:limJ1}\sup_{\lambda\in(-\infty,\lambda^+]}\left|J_1(b,\lambda) - \gamma_1(\lambda)\right| \rightarrow 0.\end{equation}

Substituting $u=\frac{b^2}{2a}$ into \eqref{eq:defnJk} gives
$$J_3(b,\lambda) = \frac{\sqrt{2}}{b} \int_0^\infty u^{-1/2}g\left(\lambda - \tfrac{b^2}{2u}\right)\exp\left(\tfrac{(\lambda-\frac{b^2}{2u})^3}{6}\right)\exp(-u)\mathrm{d}u.$$
So we define
$$\gamma_3(\lambda):=\sqrt{2}g(\lambda)e^{\lambda^3/6}\int_0^\infty u^{-1/2}\exp(-u)\mathrm{d}u,$$
and then by dominated convergence and uniform continuity of $g$,
\begin{equation}\label{eq:limJ3}\lim_{b\downarrow 0}\sup_{\lambda\in(-\infty,\lambda^+]} \left|b J_3(b,\lambda)-\gamma_3(\lambda)\right|=0.\end{equation}

A very similar argument can be deployed to obtain
$$\lim_{b\downarrow 0}\sup_{\lambda\in(-\infty,\lambda^+]} \left|b^3 J_5(b,\lambda)-\gamma_5(\lambda)\right|=0,$$
where
$$\gamma_5(\lambda):=2\sqrt{2}g(\lambda)e^{\lambda^3/6}\int_0^\infty u^{1/2}\exp(-u)\mathrm{d}u.$$

So we can return to \eqref{eq:dalphaviaJks}, which we rewrite as
$$\frac{\partial}{\partial b} \alpha(b,\lambda) = \frac{J_1(b,\lambda)\cdot b^3J_5(b,\lambda)}{(bJ_3(b,\lambda))^2} - b.$$
We now take the limit $b\downarrow 0$, for $\lambda\in[\lambda^-,\lambda^+]$. This denominator is uniformly bounded away from zero for $\lambda\in[\lambda^-,\lambda^+]$. So we obtain
\begin{equation}\label{eq:dpartialbtozero}\lim_{b\downarrow 0} \sup_{\lambda\in[\lambda^-,\lambda^+]} \left|\frac{\partial}{\partial b}\alpha(b,\lambda) - \frac{\gamma_1(\lambda)\gamma_5(\lambda)}{\gamma_3(\lambda)^2}\right| = 0.\end{equation}

Now, $\gamma_3,\gamma_5$ are clearly continuous, and $\gamma_1$ is also continuous by the same argument as given for continuity of $J_k$ in the proof of Lemma \ref{alphawelldefined}. Furthermore, $\gamma_3$ is positive, and so we have
$$\max_{\lambda\in[\lambda^-,\lambda^+]}\gamma_1(\lambda)<\infty,\quad\min_{\lambda\in[\lambda^-,\lambda^+]} \gamma_3(\lambda)>0.$$
Taken with \eqref{eq:limJ3}, the latter shows that
$$\lim_{b\downarrow 0}\inf_{\lambda\in[\lambda^-,\lambda^+]}J_3(b,\lambda)=\infty.$$
Therefore, since $\alpha(b,\lambda)=\frac{J_1(b,\lambda)}{J_3(b,\lambda)}$, using \eqref{eq:limJ1} as well, we obtain precisely the first required statement \eqref{eq:alphaasbtozero}.

\par
For similar reasons, we have
\begin{equation}\label{eq:gammasbdd}\max_{\lambda\in[\lambda^-,\lambda^+]}\frac{\gamma_1(\lambda)\gamma_5(\lambda)}{\gamma_3(\lambda)^2}<\infty.\end{equation}

Since $\frac{\partial}{\partial b} \alpha(b,\lambda)$ is continuous on $(0,\rho]\times[\lambda^-,\lambda^+]$, from \eqref{eq:dpartialbtozero} and \eqref{eq:gammasbdd}, it's clear that
$$\sup_{b\in(0,\rho], \lambda\in[\lambda^-,\lambda^+ ]}\left|\frac{\partial}{\partial b}\alpha(b,\lambda)\right|<\infty,$$
from which \eqref{eq:alphalocLip} follows. This completes the proof of Proposition \ref{alphaLipprop}.
\end{proof}
\end{prop}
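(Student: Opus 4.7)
The plan is to derive sharp $b\downarrow 0$ asymptotics for the integrals $J_1, J_3, J_5$ of \eqref{eq:defnJk}, from which both assertions follow as ratios. For $J_1$, the integrand is monotone increasing as $b\downarrow 0$, so monotone convergence yields
$$J_1(b,\lambda) \uparrow \gamma_1(\lambda) := \int_0^\infty a^{-1/2} g(\lambda-a)\exp\!\left(\tfrac{(\lambda-a)^3}{6}\right)\mathrm{d}a,$$
finite by the split-at-$a=1$ argument used in the proof of Lemma \ref{alphawelldefined}, and uniform in $\lambda\in[\lambda^-,\lambda^+]$ via a $\lambda$-independent integrable dominating function; in particular $\sup_{\lambda} J_1(b,\lambda) < \infty$ for $b$ small.

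The key technical move for $J_3$ (and $J_5$ later) is the substitution $u = b^2/(2a)$, which turns $a^{-k/2}\mathrm{d}a$ into a multiple of $b^{1-k} u^{(k-3)/2}\mathrm{d}u$ and converts $\exp(-b^2/(2a))$ into the integrable weight $e^{-u}$. Applied to $J_3$ this gives
$$b\,J_3(b,\lambda) = \sqrt{2}\int_0^\infty u^{-1/2} g\!\left(\lambda - \tfrac{b^2}{2u}\right)\exp\!\left(\tfrac{1}{6}\bigl(\lambda-\tfrac{b^2}{2u}\bigr)^3\right) e^{-u}\,\mathrm{d}u,$$
which by the uniform continuity of $g$ (Lemma \ref{gregularitylemma}) and dominated convergence converges uniformly on $\lambda\in[\lambda^-,\lambda^+]$ to $\gamma_3(\lambda) := \sqrt{2\pi}\,g(\lambda) e^{\lambda^3/6}$. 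Since $g>0$, $\gamma_3$ is bounded away from zero on the compact interval, so $\alpha(b,\lambda) = J_1/J_3 = b\cdot J_1/(bJ_3) \to 0$ uniformly in $\lambda$, giving \eqref{eq:alphaasbtozero}.

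For the Lipschitz property I would reduce to a uniform bound on $\partial_b\alpha$ on $(0,\rho]\times[\lambda^-,\lambda^+]$ and then apply the mean value theorem. From \eqref{eq:partialJk} the quotient rule gives $\partial_b\alpha = -b + bJ_1J_5/J_3^2$, so the content is controlling $bJ_1J_5/J_3^2$. The same substitution $u=b^2/(2a)$ applied to $J_5$ shows that $b^3 J_5(b,\lambda)$ converges uniformly on $[\lambda^-,\lambda^+]$ to some $\gamma_5(\lambda)$ proportional to $g(\lambda)e^{\lambda^3/6}$ and is therefore uniformly bounded. Writing
$$b \cdot \frac{J_1 J_5}{J_3^2} = J_1 \cdot \frac{b^3 J_5}{(b J_3)^2},$$
each factor on the right is uniformly bounded on a half-neighbourhood $(0,b_0]\times[\lambda^-,\lambda^+]$ (numerator by the limits above, denominator uniformly away from zero since $\inf_{\lambda}\gamma_3>0$), while on $[b_0,\rho]\times[\lambda^-,\lambda^+]$ continuity of $\partial_b\alpha$ from Lemma \ref{alphawelldefined} gives the bound by compactness. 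The main obstacle is that $J_3$ and $J_5$ individually diverge as $b\downarrow 0$ (at rates $b^{-1}$ and $b^{-3}$), so the result is only obtainable through the cancellations $\alpha = J_1/J_3$ and $\partial_b\alpha = -b + J_1\cdot(b^3 J_5)/(bJ_3)^2$ in which the singular $b$-powers collapse; the substitution $u = b^2/(2a)$ is precisely what isolates those powers and reduces each piece to a Laplace-type integral where positivity of $g$ (for the denominator) and uniform continuity of $g$ (to pass to the limit inside the integrals) are the substantive inputs.
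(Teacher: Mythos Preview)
Your proposal is correct and follows essentially the same route as the paper: both arguments reduce the Lipschitz bound to a uniform bound on $\partial_b\alpha = -b + bJ_1J_5/J_3^2$, use monotone convergence for $J_1$ and the substitution $u=b^2/(2a)$ to extract the precise blow-up rates $bJ_3\to\gamma_3$ and $b^3J_5\to\gamma_5$, rewrite the critical term as $J_1\cdot(b^3J_5)/(bJ_3)^2$, and then combine the near-zero asymptotics with continuity on $[b_0,\rho]\times[\lambda^-,\lambda^+]$. Your derivation of \eqref{eq:alphaasbtozero} via $\alpha = b\cdot J_1/(bJ_3)$ is likewise the same as the paper's.
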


\subsection{Existence of $Z^\lambda$}\label{SDEsection}
First we prove Proposition \ref{Zlambdaexists}, which asserts that $Z^\lambda$ is well-defined. The short proof considers a limit of \emph{localised} reflected SDEs, whose existence is given by the following theorem, which assumes a \emph{global} Lipschitz and boundedness condition on the coefficients of the reflected SDE. 
\begin{theorem}\cite[\textsection IX 2.14]{RevuzYorbook}\label{ReflSDEthm}
Let $\sigma(s,x)$ and $b(s,x)$ be functions $\R_+\times \R_+\rightarrow \R$, and $W$ a Brownian motion. For $z_0\ge 0$, we call a solution to the SDE with reflection $e_{z_0}(\sigma,b)$ a pair $(Z,K)$ of processes such that
\begin{enumerate}
\item the process $Z$ is continuous, positive, $\mathcal{F}^W$-adapted, and
\begin{equation}\label{eq:generalSDE}Z(t)=z_0 + \int_0^t \sigma(s,Z(s))\mathrm{d}W(s) + \int_0^t b(s,Z(s))\mathrm{d}s + K(t),\end{equation}
\item the process $K$ is continuous, non-decreasing, vanishing at zero, $\mathcal{F}^W$-adapted, and
\begin{equation}\label{eq:generalrefl}\int_0^\infty Z(s)\mathrm{d}K(s)=0.\end{equation}
\end{enumerate}
If $\sigma$ and $b$ are bounded and satisfy the global Lipschitz condition
\begin{equation}\label{eq:Lipschsigmab}|\sigma(s,x)-\sigma(s,y)| + |b(s,x)-b(s,y)|\le C|x-y|,\end{equation}
for every $s,x,y\in(0,\infty)$ and some constant $C$, then there exists a solution to $e_{z_0}(\sigma,b)$, and furthermore this solution is unique.
\end{theorem}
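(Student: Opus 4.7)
The standard approach, going back to Skorohod and Tanaka, is to decouple the reflection from the stochastic dynamics by introducing the \emph{Skorohod reflection map}. For a continuous function $y:\R_+\to\R$ with $y(0)\ge 0$, define
\begin{equation*}
\Gamma(y)(t):=y(t)+\max\Bigl(0,\,-\inf_{0\le s\le t}y(s)\Bigr).
\end{equation*}
One checks directly that $(\Gamma(y),\,\Gamma(y)-y)$ solves the deterministic Skorohod problem: $\Gamma(y)$ is non-negative and continuous, $\Gamma(y)-y$ is continuous, non-decreasing, vanishes at zero, and $\int_0^\infty \Gamma(y)(s)\,d(\Gamma(y)-y)(s)=0$ (the support property, since $\Gamma(y)-y$ only grows when $y$ attains a new infimum, at which instant $\Gamma(y)=0$). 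Moreover $\Gamma$ satisfies the uniform Lipschitz bound
\begin{equation*}
\sup_{s\le t}\bigl|\Gamma(y_1)(s)-\Gamma(y_2)(s)\bigr|\le 2\sup_{s\le t}\bigl|y_1(s)-y_2(s)\bigr|,
\end{equation*}
obtained by estimating the difference of the running infima. These observations reduce finding $(Z,K)$ to finding an $\F^W$-adapted continuous $Z$ with $Z=\Gamma(Y)$, where $Y(t):=z_0+\int_0^t \sigma(s,Z(s))\,dW(s)+\int_0^t b(s,Z(s))\,ds$.

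The plan is to implement this as a Picard iteration. Set $Z^{(0)}(t)\equiv z_0$, and define inductively
\begin{equation*}
Y^{(n)}(t):=z_0+\int_0^t \sigma(s,Z^{(n)}(s))\,dW(s)+\int_0^t b(s,Z^{(n)}(s))\,ds,\qquad Z^{(n+1)}:=\Gamma(Y^{(n)}).
\end{equation*}
Adaptedness and continuity are preserved at each step, since $\Gamma$ is a continuous functional on path space involving only a running infimum of a continuous adapted process. Combining the Lipschitz bound for $\Gamma$, the global Lipschitz hypothesis \eqref{eq:Lipschsigmab} on $\sigma,b$, Doob's $L^2$ maximal inequality, the Burkholder--Davis--Gundy inequality for the stochastic integral, and Cauchy--Schwarz for the drift integral, I would establish the recursive bound
\begin{equation*}
\phi_{n+1}(t):=\E{\sup_{s\le t}\bigl|Z^{(n+1)}(s)-Z^{(n)}(s)\bigr|^2}\le L\int_0^t \phi_n(s)\,ds,
\end{equation*}
for a constant $L$ depending on $t$, the Lipschitz constant $C$ of \eqref{eq:Lipschsigmab}, and a universal BDG constant. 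Boundedness of $\sigma,b$ ensures $\phi_0(t)<\infty$, and iterating gives $\phi_n(t)\le \phi_0(t)(Lt)^n/n!$, which is summable. Thus $(Z^{(n)})$ is Cauchy in $L^2$ uniformly on compact time-intervals and converges to a limit $Z$; passing to a further subsequence with uniform a.s.\ convergence, one verifies $Z=\Gamma(Y)$ for the corresponding $Y$, and sets $K:=Z-Y$, which inherits continuity, monotonicity, $K(0)=0$, and the support property \eqref{eq:generalrefl} from the Skorohod construction.

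For uniqueness, if $(Z,K)$ and $(Z',K')$ are two solutions with associated $Y,Y'$, then $Z=\Gamma(Y)$ and $Z'=\Gamma(Y')$, so the same estimate gives
\begin{equation*}
\E{\sup_{s\le t}|Z(s)-Z'(s)|^2}\le L\int_0^t \E{\sup_{u\le s}|Z(u)-Z'(u)|^2}\,ds,
\end{equation*}
and Gronwall's lemma forces the left-hand side to vanish for every $t$. The main technical obstacle is the deterministic Skorohod step: proving the Lipschitz bound on $\Gamma$ carefully requires a short case analysis based on the signs of $\inf y_1$ and $\inf y_2$, and one must verify that $\Gamma$ indeed preserves adaptedness (which follows because $\inf_{s\le t} y(s)$ is measurable with respect to $\sigma(y(s):s\le t)$ whenever $y$ is continuous and adapted). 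Once these are in place, the remainder is a routine fixed-point argument of the same flavour as for classical Itô SDEs, with boundedness of $\sigma,b$ used only to start the iteration with a finite $\phi_0$.
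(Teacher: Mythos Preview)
The paper does not prove this theorem; it is quoted verbatim from Revuz--Yor \cite[\S IX.2.14]{RevuzYorbook} and used as a black box to establish Proposition~\ref{Zlambdaexists}. So there is no ``paper's proof'' to compare against. Your sketch is the standard Skorohod-map/Picard-iteration argument, which is precisely the method used in Revuz--Yor, and the outline you give is correct.

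One small point worth tightening in your uniqueness argument: you write ``if $(Z,K)$ and $(Z',K')$ are two solutions with associated $Y,Y'$, then $Z=\Gamma(Y)$ and $Z'=\Gamma(Y')$''. This step requires not just the existence part of the deterministic Skorohod lemma but its \emph{uniqueness}: any pair $(z,k)$ with $z\ge 0$, $k$ non-decreasing, $k(0)=0$, $z=y+k$, and $\int z\,dk=0$ must satisfy $k(t)=\max(0,-\inf_{s\le t}y(s))$. You allude to this under ``technical obstacle'' but do not state it where it is actually invoked. The standard proof (if $(z_1,k_1),(z_2,k_2)$ both solve the Skorohod problem for the same $y$, then $(z_1-z_2)^2 = 2\int (z_1-z_2)\,d(k_1-k_2)\le 0$ by the support condition) is short and should be inserted. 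Otherwise the argument is complete.
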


\subsubsection{Proof of Proposition \ref{Zlambdaexists}}
We now return to the existence of $Z^\lambda$ as in \eqref{eq:ZSDE}, for fixed $\lambda\in\R$. In this setting $\sigma(s,x)\equiv 1$, but
\begin{equation}\label{eq:defnbsx}b(s,x):= \lambda - s -\alpha(x,\lambda - s),\end{equation}
is neither bounded below nor satisfies the global Lipschitz property. However, by Proposition \ref{alphaLipprop}, for any $R>0$, we can define $b^R(s,x)$ such that $b^R(s,x)$ is bounded and globally Lipschitz in $x$; and $b^R(s,x)=b(s,x)$ whenever $(s,x)\in[0,R]\times[0,R]$. Then Theorem \ref{ReflSDEthm} asserts that there is a unique pair of processes $(Z^{\lambda,R},K^{\lambda,R})$ corresponding to this drift, where $Z^{\lambda,R}(0)=0$.

\par
Let $\tau^{\lambda,R}$ be the time at which $Z^{\lambda,R}$ first hits $R$. Take $R'\ge R$. Then, it is clear that $Z^{\lambda,R}$ is equal to $Z^{\lambda,R'}$ up to time $R\wedge\tau^{\lambda,R}$ almost surely. Also, since $b(s,x)$ is bounded above by $\lambda$, it follows that $\tau^{\lambda,R}\rightarrow \infty$ as $R\rightarrow\infty$ almost surely. Therefore, we may define
$$Z^\lambda(t)= \lim_{R\rightarrow\infty} Z^{\lambda,R}(t),$$
for almost all paths of $W$, and $Z^\lambda$. It is immediate that $Z^\lambda$ satisfies \eqref{eq:ZSDE}. Furthermore, any solution $(Z^{\lambda},K^{\lambda})$ to \eqref{eq:ZSDE} must coincide with $(Z^{\lambda,R},K^{\lambda,R})$ up to $\tau^{\lambda,R}$, and so uniqueness of $(Z^\lambda,K^\lambda)$ follows as well, as required for Proposition \ref{Zlambdaexists}.

\subsection{Convergence of non-negative Markov processes}\label{SVsection}
It remains to show that Theorem \ref{explconvtheorem} follows from Proposition \ref{limdriftprop} as claimed.

\par
A general framework for showing convergence of Markov processes to the solutions of SDEs was introduced by Stroock and Varadhan in the 60s (see, for example, \cite{SVbook}). The convergence of Markov processes to reflected diffusions is treated in \cite{SVDiffBCs} in high generality, allowing for general boundaries in $\R^d$, and inhomogeneous stickiness at the boundaries.

\par
We assume that a sequence of Markov chains $Z^N,\,N\in\N$ is given, where $Z^N$ has discrete state space $\mathcal{S}^N\subseteq \R_{\ge 0}$, with $0\in \mathcal{S}^N$, and initial condition $Z^N(0)=0$. We define the time-inhomogeneous transition operator $\pi^N$ as 
$$\pi^N_n(x,y)=\Prob{Z^N(n+1)=y \,\big|\, Z^N(n)=x},
\quad n\in\N,\; x,y\in\mathcal{S}^N.$$

We consider a time-rescaling $(h(N))_{N\in\N}$ for which $h(N)\rightarrow 0$ as $N\rightarrow\infty$.

\begin{remark}In our specific example, we have $\mathcal{S}^N=N^{-1/3}\Z_{\ge 0}$, and $h(N)=N^{-2/3}$.\end{remark}

Then, for every $x\in\mathcal{S}^N$, we define the following rescaling transition quantities corresponding to drift, diffusivity, and macroscopic jump probabilities, respectively,
$$b^N(t,x) = \frac{1}{h(N)}\sum_{y\in\mathcal{S}^N} (y-x) \pi^N_{\fl{t/h(N)}} (x,y),$$
$$a^N(t,x) = \frac{1}{h(N)}\sum_{y\in\mathcal{S}^N} (y-x)^2 \pi^N_{\fl{t/h(N)}}(x,y),$$
$$\Delta_\epsilon = \frac{1}{h(N)} \sum_{\substack{y\in \mathcal{S}^N\\ |y-x|>\epsilon}}\pi^N_{\fl{t/h(N)}} (x,y).$$

The following theorem, which is a special case of Theorem 6.3 from \cite{SVDiffBCs}, gives conditions under which time-rescaled versions of $Z^N$ converge to SDEs with reflection. 

\begin{theorem}\label{altSVreflthm}
Suppose we have that for any $T,M>0$, and any $\epsilon>0$,
$$\lim_{N\rightarrow\infty} \sup_{t\in[0,T]}\sup_{\substack{x\in \mathcal{S}^N\\x\le M}}\Delta_\epsilon^N(t,x)=0,\qquad \liminf_{N\rightarrow\infty} \inf_{t\in[0,T]}a^N(t,0)>0$$
$$\lim_{N\rightarrow\infty}\sup_{t\in[0,T]}\sup_{\substack{x\in \mathcal{S}^N\\0<x\le M}}\left|a^N(t,x)-1\right|=0,\qquad \lim_{N\rightarrow\infty}\sup_{t\in[0,T]}\sup_{\substack{x\in \mathcal{S}^N\\0<x\le M}}\left|b^N(t,x)-b(t,x)\right|=0,$$
and that furthermore $b(\cdot,\cdot)$ satisfies the global Lipschitz condition \eqref{eq:Lipschsigmab} of the previous theorem. Then
$$Z^N\left( \fl{\tfrac{t}{h(N)}}\right)_{t\ge 0} \;\Rightarrow\; (Z(t))_{t\ge 0},$$
as $N\rightarrow\infty$ with respect to the topology of uniform convergence on $\mathbb{D}[0,T]$ for each $T<\infty$, where $Z$ is the unique solution to $e_0(1,b)$, as given by Theorem \ref{ReflSDEthm}.

\end{theorem}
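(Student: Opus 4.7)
The plan is to deduce Theorem \ref{altSVreflthm} as a direct specialization of Theorem 6.3 of \cite{SVDiffBCs} to the half-line $[0,\infty)$ with instantaneous reflection at $0$ and unit diffusion coefficient. Embed each chain as the c\`adl\`ag process $\tilde Z^N(t) := Z^N(\fl{t/h(N)})$, and view the hypotheses on $b^N$, $a^N$ and $\Delta_\epsilon^N$ as controlling, respectively, the infinitesimal drift, diffusivity and jump structure of $\tilde Z^N$ after the time rescaling. The goal is then to verify the specific hypotheses required by Stroock--Varadhan's general result in this one-dimensional setting, and combine the resulting convergence with the uniqueness provided by Theorem \ref{ReflSDEthm}.

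For behaviour in the interior, fix a twice continuously differentiable test function $f$ with bounded derivatives and $f'(0)=0$. By Taylor expansion and the jump-control hypothesis $\Delta_\epsilon^N \to 0$ (used to truncate the difference operator at displacement $\epsilon$ and control the remainder via boundedness of $f''$ and the near-boundary control on $a^N$),
\[
\frac{1}{h(N)}\E{f(\tilde Z^N(t+h(N)))-f(\tilde Z^N(t))\,\big|\,\tilde Z^N(t)=x} = b^N(t,x) f'(x)+\tfrac12 a^N(t,x)f''(x)+o(1),
\]
uniformly for $x\in(0,M]\cap\mathcal{S}^N$ and $t\in[0,T]$. The hypotheses $a^N\to 1$ and $b^N\to b$ then deliver uniform convergence of this operator to the generator of the reflected diffusion $e_0(1,b)$, confirming the ``interior'' half of Stroock--Varadhan's setup.

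The delicate step is the reflecting boundary. Here the hypothesis $\liminf_N \inf_{t\le T} a^N(t,0)>0$ matches precisely Stroock--Varadhan's non-stickiness condition: starting from $0$, the chain makes an expected squared move per time step that is at least a positive constant times $h(N)$, so the fraction of time spent at $\{0\}$ vanishes in the limit and any pushing accumulating there is realized only through a continuous local-time term. Combined with tightness via Aldous's criterion (reduced to uniform control of expected increments over small time windows, which is immediate from the bounds on $a^N$ and $b^N$ on compacts together with $\Delta_\epsilon^N \to 0$ ruling out large jumps), and a compact-containment estimate from the linear-in-$x$ growth of $b$ (applied to the second moment of $\tilde Z^N$ via Gronwall), this places us within the scope of Theorem 6.3 of \cite{SVDiffBCs}.

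Any subsequential weak limit $Z$ of $\tilde Z^N$ therefore solves the reflected martingale problem for $e_0(1,b)$. The global Lipschitz and boundedness conditions on $b$, via Theorem \ref{ReflSDEthm}, give uniqueness of this solution, and uniqueness combined with tightness yields the required convergence in $\mathbb{D}[0,T]$. The main obstacle throughout is the boundary analysis: reconciling the repeated discrete bounces of $\tilde Z^N$ away from $0$ with the continuous local-time term in the limiting SDE is where all the content of $\liminf a^N(t,0)>0$ enters, and is precisely the technical heart of Stroock and Varadhan's original treatment that we import wholesale.
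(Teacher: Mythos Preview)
Your approach is aligned with the paper's treatment: the paper does not prove this theorem at all but simply states it as a special case of Theorem~6.3 of \cite{SVDiffBCs}, so there is no ``paper's own proof'' to compare against. Your sketch of how the specialization goes (interior generator convergence via Taylor expansion, the non-stickiness condition at the boundary, tightness, and identification of the limit via uniqueness from Theorem~\ref{ReflSDEthm}) is more detail than the paper itself provides, and is a reasonable outline of why the general Stroock--Varadhan hypotheses are met in this one-dimensional reflecting setting.
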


\subsubsection{Proof of Theorem \ref{explconvtheorem}}
Now let $Z^{N,p}$ be the exploration process of $F(N,p)$, satisfying the conditions of Theorem \ref{FNptheorem}. Again, in our setting, we must account for the fact that the drift of $Z^\lambda$ is neither bounded nor globally Lipschitz.

\par
Recall from \eqref{eq:defnbsx} and the following paragraph the definitions of $b(s,x)$  and $b^R(s,x)$. For any $R\in \N$, we can construct a Markov process $(Z^{N,p,R}_n,n\ge 0)$ whose transition probabilities coincide with those of $Z^{N,p}$ whenever $n\in [0,TN^{2/3}]$ and $Z^{N,p,R}_n \le RN^{1/3}$, and for which, by Proposition \ref{limdriftprop},
$$N^{1/3} \E{Z^{N,p,R}_{tN^{2/3}+1} - Z^{N,p,R}_{tN^{2/3}} \, \big|\, Z^{N,p,R}_{tN^{2/3}}=xN^{1/3}} \rightarrow b^R(t,x),$$
uniformly for $t\in[0,T]$ and $x$ in any compact interval in $(0,\infty)$. We define the rescaled process $\tilde Z^{N,\lambda,R}$ from $Z^{N,p,R}$ analogously to \eqref{eq:defntildeZ}. Then we have $\tilde Z^{N,p,R}\stackrel{d}\rightarrow Z^{\lambda,R}$ uniformly on $[0,T]$.

\par
From this,
$$\Prob{\sup_{n\in[0,TN^{2/3}]} Z^{N,p,R}_n> R N^{1/3}}\rightarrow 0,$$
as $R\rightarrow\infty$, and so as processes on $[0,T]$, the law of $\tilde Z^{N,p,R}$ converges to the law of $\tilde Z^{N,p}$ as $R\rightarrow\infty$, and the law of $Z^{\lambda,R}$ converges to the law of $Z^{\lambda}$. Thus we have proved Theorem \ref{explconvtheorem}.

Combining with the results of Sections 
\ref{Zncvgsection} and \ref{cptsizes}, 
the proof of our main result 
Theorem \ref{FNptheorem} is now also complete.

\section{Lifting from $F(N,p)$ to $F(N,m)$}\label{mtopsection}

%The main body of this study is concerned with $F(N,p)$, since for the exploration process related to this model is easier to work with. Although the exploration processes of both models have the Markov property {\bf true?? check}, we shall see that that in $F(N,p)$ the increments are described by reweighted binomial distributions, which reduces the complexity of the combinatorial calculations somewhat.
%
%\par
%That said, we consider $F(N,m)$ to be the more natural model of random forests. We can show that our desired results for $F(N,m)$ follow from corresponding results for $F(N,p)$. This argument is essentially independent from the main result for $F(N,p)$ and so we present it first. {\bf This bit needs rewriting or more likely moving.}
So far we have worked in the context of the model 
$F(N,p)$ (since in that case the transition probabilities
in the exploration process are rather more 
straightforward to work with than in the case of 
the model $F(N,m)$). 
In this section we show that Theorem \ref{FNptheorem}
for $F(N,p)$ implies Theorem \ref{FNmtheorem}
for $F(N,m)$. 

As discussed in Section \ref{subsubsec:intro-monotonicity},
if we had natural monotonicity properties for the
families $F(N,p)$ and $F(N,m)$, then it would be straightforward to deduce
Theorem \ref{FNmtheorem} from Theorem 
\ref{FNptheorem} by a sandwiching argument. 
Instead, we will construct an 
``almost monotonic" coupling. The idea of
Lemma \ref{lem:almostmonotone} below is
that, within the scaling window, if
the difference between $m^-$ and $m^+$
is small compared to $N^{2/3}$ as $N\to\infty$, then we can 
couple $F(N,m^-)$ and $F(N,m^+)$ so that
with high probability, 
the former is contained in the latter.
This coupling is achieved, informally speaking,
by adding edges one by one uniformly at random,
unless doing so would create a cycle. 
The next lemma will provide an upper bound on 
the probability that
a cycle does in fact appear.

%\begin{prop}[Local limit theorem for the 
%number of edges]
%\label{prop:locallimit}
%Let $p=\frac{1}{N}+O(N^{-4/3})$ as $N\to\infty$. 
%Let $M_{N,p}$ be the number of edges of a 
%random forest distributed according to $F(N,p)$. 
%
%Then uniformly on $m\in(N^2p/2-N^{3/5}, N^2p/2+N^{3/5})$,
%\[
%\Prob{M_{N,p}=m}=
%(1+o(1))\frac{1}{\sqrt\pi}N^{-1/2}\exp\left(-\frac{x^2}{2}\right) \text{ as } N\to\infty,
%\]
%where 
%$x=\sqrt{2}N^{-1/2}
%\left(
%m-N^2 p/2
%\right)$.
%\end{prop}
%
%\begin{proof}
%Divide \eqref{eq:tobesummedoverm} by \eqref{eq:weightedFs2}. 
%\end{proof}

\begin{lemma}\label{lem:sequential}
Let $H$ be a forest on $[N]$,
and let $S^2=S^2(H)$ be the sum of the squares of the
component sizes of $H$. Let $k$ edges, chosen independently and uniformly at random from $[N]\times[N]$, 
be added to $H$. (For convenience we allow self-edges and
repeated edges). The probability that the 
resulting graph contains a cycle (including a self-edge
or a repeated edge) is at most 
$\displaystyle\frac{2kS^2/N^2}{1-2kS^2/N^2}$.
In particular if $k=o(N^2/S^2)$,
then the graph is a forest with high probability as 
$N\to\infty$.
\end{lemma}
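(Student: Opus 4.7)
The plan is to union-bound over all ``cycle structures'' in the multigraph $H\cup E_k$, where $E_k=\{e_1,\ldots,e_k\}$ denotes the added edges. Since $H$ is a forest, every cycle in $H\cup E_k$ uses at least one $E_k$-edge; reading off the $E_k$-edges of such a cycle in cyclic order yields $r\geq 1$ distinct indices $i_1,\ldots,i_r\in[k]$ and an orientation for each edge $e_{i_j}$ (a choice of ``source'' $s_j$ and ``sink'' $t_j$ among its two endpoints) such that for each $j$ cyclically, $t_j$ and $s_{j+1}$ lie in the same $H$-component. The degenerate cases $r=1$ (self-loops $(u,u)$, or edges whose two endpoints already share an $H$-component) and $r=2$ (parallel edges or their two-step variants) are included in this framework.

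Next I would compute the probability of one such fixed cycle structure. The $k$ added edges are independent and each has two i.i.d.\ uniform endpoints on $[N]$, so the $2r$ source/sink variables $\{s_j,t_j\}_{j=1}^r$ are mutually independent and uniform on $[N]$. The $r$ constraints ``$t_j\sim_H s_{j+1}$'' each involve two variables drawn from two \emph{distinct} edges; moreover, although constraints $j-1$ and $j$ both touch edge $e_{i_j}$, they involve the independent variables $s_j$ and $t_j$ respectively, so the $r$ constraints are jointly independent. Each has probability $\sum_c(|c|/N)^2 = S^2/N^2=:p$, and hence the given cycle structure holds with probability exactly $p^r$.

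Finally I would count cycle structures of length $r$ and sum. There are $\binom{k}{r}$ choices of edge set, $(r-1)!$ cyclic orderings of them, and $2^{r-1}$ orientation classes modulo the reflection that reverses the cycle and flips all orientations, giving at most $k^r\,2^{r-1}/r$ structures. The union bound, together with the geometric sum for $2kp<1$, yields
\[
\Prob{H\cup E_k\text{ contains a cycle}}\;\leq\;\sum_{r\geq 1}\frac{k^r 2^{r-1}}{r}p^r \;=\;\sum_{r\geq 1}\frac{(2kp)^r}{2r} \;=\;-\tfrac{1}{2}\log(1-2kp)\;\leq\;\frac{kp}{1-2kp}\;\leq\;\frac{2kp}{1-2kp},
\]
using $-\log(1-x)\leq x/(1-x)$ on $(0,1)$. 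If $2kp\geq 1$ the stated bound is already $\geq 1$ and the inequality is trivial. The ``in particular'' claim then follows since $k=o(N^2/S^2)$ forces $2kp\to 0$.

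The main subtlety will be the independence claim in the second paragraph: it relies on the fact that the source and sink of any single edge $e_i=(u_i,v_i)$ are themselves independent random variables (since $u_i,v_i$ are i.i.d.\ uniform), so that even though two adjacent constraints in the cycle share an edge, they are functions of disjoint pairs of endpoints. Some care in the combinatorial counting of cycle structures modulo the dihedral symmetry group is also required, especially to verify the formula reduces correctly in the degenerate cases $r=1$ and $r=2$.
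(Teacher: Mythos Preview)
Your proof is correct and follows essentially the same strategy as the paper: union-bound over combinatorial ``cycle types'' of length $r$, bound the probability of each, and sum a geometric-type series in $2kS^2/N^2$. The paper parameterizes a cycle by the sequence of \emph{distinct $H$-components} $b_1,\dots,b_r$ it visits, bounding the probability that some $E_k$-edge connects each consecutive pair by $2kx_{b_i}x_{b_{i+1}}/N^2$ and then summing over ordered tuples of components (without modding out by the dihedral symmetry) to obtain $\sum_{r\ge 1}(2kS^2/N^2)^r$. You instead parameterize by the sequence of \emph{distinct $E_k$-edges} used, together with orientations; this lets you exploit directly that the $2r$ endpoints are i.i.d.\ uniform and that the $r$ constraints sit on disjoint pairs of them, giving exact independence rather than the paper's ``simple conditional probability argument''. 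After modding out the dihedral symmetry you obtain the slightly sharper intermediate bound $-\tfrac12\log(1-2kp)$. One small wording point: for $r=1$ your sentence ``each involve two variables drawn from two distinct edges'' is not literally true (both $s_1$ and $t_1$ come from $e_{i_1}$), but as you yourself flag in the last paragraph, they are still independent since the two coordinates of $e_{i_1}$ are i.i.d., so nothing breaks.
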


\begin{proof}
Let the components of $H$ be $C_1,\dots, C_a$
with sizes $x_1, \dots, x_a$. 

To create a cycle, for some $r\geq 1$, and some
distinct $b_1, b_2, \dots, b_r$, we have to add 
an edge between $C_{b_{i}}$ and $C_{b_{i+1}}$ 
for each $1\leq i\leq r-1$, and an edge between $C_{b_r}$ and $C_{b_1}$. This creates a cycle containing $r$ new edges 
(and also perhaps some further edges which were already 
part of $H$). 

The probability that a given edge has endpoints in $C_b$ and $C_{b'}$ is $2x_{b}x_{b'}/N^2$ if $b\ne b'$, 
and $x_b^2/N^2$ if $b=b'$,
so by a union bound,
the probability that at least one of the $k$ new edges created has endpoints in $C_b$ and $C_{b'}$ is at most $2kx_{b}x_{b'}/N^2$. 
For fixed $r$ and $b_1, \dots, b_r$,
a simple conditional probability argument then gives
a bound on the probability of creating a collection 
of edges as specified, of 
$(2k/N^2)^r (x_{b_1}x_{b_2})\dots(x_{b_{r-1}}x_{b_r})(x_{b_r}x_{b_1})$,
which is $(2k/N^2)^r x_{b_1}^2\dots x_{b_r}^2$.

Summing over $r$ and over distinct $b_1, \dots, b_r$,
we obtain that the probability of creating a cycle
is at most 
$\sum_{r=1}^\infty \left(\frac{2k}{N^2}S^2\right)^r$,
which gives the claimed bound. 
\end{proof}

We don't know whether $F(N,m+1)$ stochastically
dominates $F(N,m)$ in general; that is, whether
there is a coupling such that 
$F(N,m)\subset F(N,m+1)$ with probability $1$.
We get round this by introducing a method to create a
coupling which is ``monotone with high probability". 

Let $H_m\sim F(N,m)$, and consider generating $\tH_{m+1}$
by adding an edge chosen uniformly at random 
(from $[N]\times[N]$) to $H_m$. Let $\cA$ be the event
that $\tH_{m+1}$ is a forest. 

We claim that conditional on 
$\cA$, the distribution of $\tH_{m+1}$ is $F(N,m+1)$. 
For 
\begin{align*}
\Prob{\tH_{m+1}=H'}&=\sum_{e\in E(H')}\frac{1}{N^2}
\Prob{H_m=H'\setminus\{e\}}\\
&=\sum_{e\in E(H')}\frac{1}{N^2}\frac{1}{f(N,m)}\\
&=\frac{m+1}{N^2 f(N,m)},
\end{align*}
which is indeed constant over $H'$. 

Define also $\kH_{m+1}$ to be distributed
according to $F(N,m+1)$, independently from $H_m$
and the added edge. Now define 
\[
H_{m+1}=\begin{cases} 
\tH_{m+1} &\text{on }\cA\\
\kH_{m+1} &\text{on }\cA^c
\end{cases}.
\]
Then indeed $H_{m+1}\sim F(N,m+1)$, and 
$\Prob{H_m\subset H_{m+1}}\geq \Prob{\cA}$.

We may extend this; starting from $H_m$, 
sequentially add $k$ edges independently and 
uniformly, to give graphs $\tH_{m+1}$, $\tH_{m+2},
\dots, \tH_{m+k}$. 

Let $\cA_j$ be the event that adding the first $j$ edges
does not create a cycle (including a self-edge or repeated edge). 

Let $\kH_{m+1},\dots,\kH_{m+k}$ be independent samples
from $F(N,m+1),\dots, F(N, m+k)$ respectively, and independent
of $H_{m}, \tH_{m+1}, \dots, \tH_{m+k}$.

Now define
\[
H_{m+j}=
\begin{cases} 
\tH_{m+j} &\text{on }\cA_j\\
\kH_{m+j} &\text{on }\cA_j^c
\end{cases}.
\]
Then $H_{m+j}\sim F(N,m+j)$ for $j=0,1,\dots, k$,
and $\Prob{H_m\subset H_{m+1}\subset\dots\subset H_{m+k}}
\geq \Prob{\cA_k}$.

\begin{lemma}\label{lem:almostmonotone}
Let $m=N/2+O(N^{2/3})$ as $N\to\infty$.

Define $p^-$ and $p^+$ by 
$N^2p^-/2=\lfloor m-N^{3/5}\rfloor$ and
$N^2p^+/2=\lceil m+N^{3/5}\rceil$.

Then there is a coupling of $F^-\sim F(N,p^-)$,
$F\sim F(N,m)$, and $F^+\sim F(N, p^+)$
such that with high probability as $N\to\infty$,
$F^-\subseteq F\subseteq F^+$.
\end{lemma}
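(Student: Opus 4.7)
The plan is to combine the sequential edge-addition coupling introduced in the preamble with the concentration of $|E(F(N,p))|$ in the critical window. Recall that $F(N,p)=\sum_{m'}\Prob{|E(F(N,p))|=m'}\,F(N,m')$, and the combinatorial analysis in the proof of Lemma \ref{acyclicboundlemma} shows that the weights $(1-p)^{\binom{N}{2}}(p/(1-p))^{m'}f(N,m')$, after normalising by $\Prob{G(N,p)\text{ acyclic}}$, are approximately Gaussian in $m'$ with mean $N^2p/2$ and standard deviation of order $N^{1/2}$. Since $N^{3/5}\gg N^{1/2}$, this gives $\Prob{|E(F(N,p^-))|>m}=o(1)$ and $\Prob{|E(F(N,p^+))|<m}=o(1)$.

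The coupling is built as follows. Sample $F^-\sim F(N,p^-)$ and set $m^-:=|E(F^-)|$; independently sample $M_+$ from the marginal law of $|E(F(N,p^+))|$. Extend $F^-$ upward using the preamble's sequential construction: add uniform random edges one at a time (up to the maximum of $m$ and $M_+$), replacing by an independent sample from the appropriate $F(N,\cdot)$ distribution at any step where a cycle is produced. This yields forests $H_j\sim F(N,j)$ for each $j\ge m^-$. Define $F:=H_m$ if $m^-\le m$, else an independent sample from $F(N,m)$; define $F^+:=H_{M_+}$ if $M_+\ge m^-$, else an independent sample from $F(N,M_+)$. A short marginalisation, using the preamble's distributional claim together with the independence of $m^-$ and $M_+$, verifies that $F^-\sim F(N,p^-)$, $F\sim F(N,m)$, and $F^+\sim F(N,p^+)$ exactly.

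On the event $\mathcal{G}:=\{m^-\le m\le M_+\}\cap\{\text{no cycle arises during the sequential extension from $F^-$ up to $H_{M_+}$}\}$ we have $F^-\subseteq F\subseteq F^+$. The concentration above gives $\Prob{\{m^-\le m\le M_+\}^c}=o(1)$, and conditional on $\{m^-\le m\le M_+\}$ the number of edges added is $k:=M_+-m^-\le 2N^{3/5}+O(N^{1/2})$ with high probability. Lemma \ref{lem:sequential} applied to the initial forest $F^-$ bounds the cycle probability by a quantity of order $kS^2(F^-)/N^2$, and Corollary \ref{S2corollary} together with Markov's inequality give $S^2(F^-)=O(N^{4/3})$ in probability. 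Hence $kS^2(F^-)/N^2=O(N^{-1/15})$ in probability, so $\Prob{\mathcal{G}}=1-o(1)$, which is the claim.

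The main technical ingredient not isolated as a standalone statement is the $N^{1/2}$-scale concentration of $|E(F(N,p))|$ alluded to in Section \ref{subsubsec:intro-monotonicity}; this follows immediately from the computations in the proof of Lemma \ref{acyclicboundlemma}, specifically the Gaussian density estimate at (\ref{eq:tobesummedoverm}) combined with the Chebyshev tail bound at (\ref{eq:Chebbound}). Beyond this, the argument is a clean combination of the preamble's sequential coupling with Lemma \ref{lem:sequential} and Corollary \ref{S2corollary}.
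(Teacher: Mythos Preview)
Your proposal is correct and follows essentially the same approach as the paper: concentration of the edge count of $F(N,p)$ about $N^2p/2$ on scale $N^{1/2}\ll N^{3/5}$ (via \eqref{eq:tobesummedoverm} and \eqref{eq:Chebbound}), then the sequential edge-addition coupling from the preamble, with the cycle probability controlled by Lemma \ref{lem:sequential} and Corollary \ref{S2corollary}. The only cosmetic difference is that the paper first samples the edge counts $M^-,M^+$ and then builds the chain $H_{M^-}\subset\cdots\subset H_{M^+}$, whereas you sample $F^-$ directly (thereby revealing $m^-$) and an independent $M_+$; the marginal verification and the bound $k\le O(N^{3/5})$ go through identically.
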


\begin{proof}
Let $M^-$ have the distribution of the 
number of edges of $F(N, p^-)$, and independently
let $M^+$ have the distribution of the number of edges 
of $F(N, p^+)$. 

Note that if, conditional on $M^-$ and $M^+$, 
$F^-\sim F(N,M^-)$ and $F^+ \sim F(N, M^+)$ 
then the unconditional distributions of $F^-$ and $F^+$
are $F(N, p^-)$ and $F(N, p^+)$, respectively. 

\eqref{eq:weightedFs2} and \eqref{eq:Chebbound} tell 
us that in this regime, the probability that 
the number of edges of the graph $F(N,p)$
deviates from $N^2p/2$ by $N^{3/5}$ or more 
goes to 0 as $N\to\infty$. 

So with high probability as $N\to\infty$, we have 
$M^-\in(m-2N^{3/5}, m)$ and $M^+\in (m, m+2N^{3/5})$. 
If either of these fails, we give up trying to do
anything smart and simply set $F^-\sim F(N, M^-)$, 
$F\sim F(N,m)$ and $F^+\sim F(N, M^+)$ independently. 

Otherwise, we have $M^-<m<M^+$,
and we use the above idea of adding edges sequentially.
Throughout the construction below we condition on $M^-$ and
$M^+$ and regard them as fixed. 

Let $H_{M^-}\sim F(N, M^-)$, and sequentially add
$K=M^+-M^-\leq 4N^{3/5}$ edges independently and uniformly,
to give graphs 
$\tH_{M^-+1}$, $\tH_{M^-+2},
\dots, \tH_{M^+}$. 
As before, let $\cA_j$ be the event that adding the first $j$ edges
does not create a cycle. 

Let $\kH_{M^-+1},\dots,\kH_{M^+}$ be independent samples
from $F(N,M^-+1),\dots, F(N, M^+)$ respectively, and independent of $H_{M^-}, \tH_{M^-+1}, \dots, \tH_{M^+}$.

Now for $1\leq j\leq K$, define
\[
H_{M^-+j}=
\begin{cases} 
\tH_{M^-+j} &\text{on }\cA_j\\
\kH_{M^-+j} &\text{on }\cA_j^c
\end{cases}.
\]
Then $H_{M^-+j}\sim F(N,M^-+j)$ for $j=0,1,\dots, K$,
and $H_{M^-}\subset H_{M^-+1}\subset\dots\subset H_{M^+}$
whenever $\cA_K$ occurs. 

In particular define $F^-=H_{M^-}$, $F=H_m$ 
and $F^+=H_{M^+}$. Then (unconditionally),
$F^-$, $F$ and $F^+$ have the desired marginal 
distributions, and will be ordered as desired 
whenever the event $\cA_K$ occurs. So to complete the proof it suffices
to show that $\cA_K$ occurs with high probability
as $N\to\infty$. 

Let $S^2(H_{M^-})$ be the sum of squares of the component 
sizes of $H_{M^-}$. We know that, averaging over
$M^-$, the distribution of $H_{M^-}$ is that
of $F(N, p^-)$. This is stochastically dominated
by $G(N, p^-)$, and so Corollary \ref{S2corollary} tells us that $\E{S^2(H_{M^-})}\le \E{S^2(G(N,p^-)}=O(N^{4/3})$. In particular,
$S^2(H_{M^-})\leq N^{4/3+\epsilon}$ with high probability
as $N\to\infty$, for any $\epsilon>0$. 

But the number of edges $K$ that we add in the 
sequential construction is at most $4N^{3/5}$. 
So Lemma \ref{lem:sequential} tells us 
that if indeed $S^2(H_{M^-})\leq N^{4/3+\epsilon}$, then 
(if $\epsilon$ is taken sufficiently small) with
high probability no cycle is created by adding 
$K$ edges to $H_{M^-}$. Hence the event $\cA_K$ occurs 
with high probability as desired. 
\end{proof}

Finally, we can deduce our main
scaling limit result for the model $F(N,m)$.

\begin{proof}[Proof of Theorem \ref{FNmtheorem}]
If $m$ has the given asymptotics, and 
$p^-$ and $p^+$ are defined in terms of $m$ as in Lemma 
\ref{lem:almostmonotone}, then 
$p^-=1/N+(\lambda+o(1))N^{-4/3}$, and the same
is true for $p^+$. 

From Theorem \ref{FNptheorem}, this means that the rescaled component sizes 
of both $F(N, p^-)$ and $F(N, p^+)$ have 
the limit in distribution given by $\C^\lambda$.

But from Lemma \ref{lem:almostmonotone}, 
if the component sizes of $F(N, p^-)$ and $F(N, p^+)$
both have this distributional limit, then
the same must be true of $F(N,m)$, and we are done.
\end{proof}

\section*{Acknowledgments}
We are grateful to Christina Goldschmidt for many valuable discussions during
the course of this work. We thank Oliver Riordan for many insightful comments, especially concerning a simplication of Lemma \ref{fNvsfN+1lemma}, Bal{\'a}zs R{\'a}th for a valuable conversation
about the form of the diffusion $Z^\lambda$ at an early stage of the project, and Tom Kurtz for helpful advice about 
the methods of Section \ref{SVsection}.
We thank the referee for pointing out an oversight in the
proof, and for several further helpful comments.
The second author was supported by EPSRC doctoral
training grant EP/K503113, ISF grant 1325/14, and in part by the Joan and Reginald Coleman--Cohen Fund, and the work was also
supported by EPSRC grant EP/J019496/1.
\bibliographystyle{abbrv}
\bibliography{CRF}

\end{document}